\newtheorem{theorem}{Theorem}[section]
\newtheorem{proposition}[theorem]{Proposition}
\newtheorem{definition}{Definition}
\newtheorem{corollary}[theorem]{Corollary}
\newtheorem{lemma}{Lemma}
\newtheorem{remark}{Remark}
\newtheorem{ithm}{Theorem}[section]
\newtheorem{icor}[ithm]{Corollary}
\def\cal#1{\mathcal{#1}}
\def\bb#1{\mathbb{#1}}
\def\lie#1{\mathfrak{#1}}
\def\co{\colon}
\newcommand{\h}{\frac{1}{2}}
\newcommand{\ga}{\textsl{g}}
\renewcommand{\email}[2][]{%
	\ifx\emails\@empty\relax\else{\g@addto@macro\emails{,\space}}\fi%
	\@ifnotempty{#1}{\g@addto@macro\emails{\textrm{(#1)}\space}}%
	\g@addto@macro\emails{#2}%
}
\title{Gromoll--Meyer's actions and the geometry of (exotic) spacetimes}
\author{Leonardo F. Cavenaghi}
\address{Instituto de Matemática, Estatística e Computação Científica -- Unicamp, Rua Sérgio Buarque de Holanda, 651, 13083-859, Campinas, SP, Brazil}
\email{leonardofcavenaghi@gmail.com}
\author{Lino Grama}
\address{Instituto de Matemática, Estatística e Computação Científica -- Unicamp, Rua Sérgio Buarque de Holanda, 651, 13083-859, Campinas, SP, Brazil}
\email{lino@ime.unicamp.br}
\begin{document}
\subjclass[2020]{53C18, 53C21, 53C25, 53C80, 83C05, 83C20}
\keywords{exotic manifolds, exotic spacetimes, Lorentzian manifolds, Lorentzian completeness, Cheeger deformations}

\begin{abstract}
	Since the advent of new pairwise non-diffeomorphic structures on smooth manifolds, it has been questioned whether two topologically identical manifolds could admit different geometries. Not surprisingly, physicists have wondered whether a smooth structure assumption different from some classical known models could produce different physical meanings. Motivated by the works \cite{cmp/1103943444, wittenrelated}, \cite{brans, Sladkowski1996}, in this paper, we inaugurate a very computational manner to produce physical models on classical and exotic spheres that can be built equivariantly, such as the classical Gromoll--Meyer exotic spheres. As first applications, we produce Lorentzian metrics on homeomorphic but not diffeomorphic manifolds that enjoy the same physical properties, such as geodesic completeness, positive Ricci curvature, and compatible time orientation. These constructions can be pulled back to higher models, such as exotic ten spheres bounding spin manifolds, to be approached in forthcoming papers.
\end{abstract}
\maketitle

\section{Introduction}
The notion of differentiation in a spacetime model is usually inherited from the setup considered: observe that whenever we fix a background to study differential geometry (and to physical content, especially in general relativity and field theory, the advantage is taken from this mathematical field), physicists have constantly assumed that smoothness is more a necessary tool to the computations than it contains a hidden physical meaning. However, due to the seminal work of John Milnor \cite{mi} leading to the discovery of exotic structures keeping the (point set) topology unchanged, it is natural to ask whether smoothness can obstruct global descriptions of spacetime models. 

The exotic $ 7$ spheres constructed by Milnor 1956 are all examples of fiber bundles over the $ 4$ sphere 
with structure group $\mathrm{SO}(4)$. By classifying bundles on spheres via the \emph{clutching construction}\footnote{The clutching construction recovers $G$-principal bundles from a cocycle in $G$-Cech cohomology. For sphere bundles, it is given by the covering of the two $n$-dimensional hemispheres that overlap at the equator and one single transition function on that equator ($\mathrm{S}^{n-1}\rightarrow G$).} These correspond to homotopy classes of maps $\mathrm{S}^3\rightarrow \mathrm{SO}(4)$; see \cite{mi,mi3, ms} for clarifications and additional results. In this paper, we describe explicitly three models for compact Lorentzian spacetimes, which are base of principal bundles with structure group $\mathrm{S}^1\subset \mathrm{SU}(2)$, endowed with Lorentzian metrics for which the total space has a semi-Riemannian submersion metric. Moreover, on the base manifolds, the Ricci curvature is shown to be strictly positive in all the cases despite the signature of the metric being mixed. 

The main motivation in our constructions comes from trying to understand if two non-compatible smooth structures on the same ``spacetime'' could enjoy different physical rules. In this direction, we remark on the works of \cite{cmp/1103943444, wittenrelated, brans, Sladkowski1996}. For the constructions, we base ourselves on the following (see \cite{SperancaCavenaghiPublished, CavenaghiSperanca+2022+95+104}): Consider a compact connected principal bundle $G\hookrightarrow P \to M$ with a principal action $\bullet.$ Assume that there is another action (which we denote by $\star$) on $P$ that commutes with $\bullet.$ This makes $P$ a $G\times G$--manifold. If one assumes that $\star$ is free, one gets a $\star$-diagram of bundles:
\begin{equation*}
\begin{xy}\xymatrix{& G\ar@{..}[d]^{\bullet} & \\ G\ar@{..}[r]^{\star} & P\ar[d]^{\pi}\ar[r]^{\pi'} &M'\\ &M&}\end{xy}
\end{equation*}
In the former diagram, $M$ is the quotient of $P$ by the $\bullet$-action, and $M'$ is the quotient of $P$ by the $\star$-action. Since $\bullet$ and $\star$ commute, $\bullet$ descends to an action on $M'$ and $\star$ descends to an action on $M$. We denote the orbit spaces of these actions by $M'/\bullet$ and $M/\star$, respectively. It is worth pointing recalling that such diagrams were first introduced in \cite{duran2001pointed} and later generalized in \cite{speranca2016pulling, SperancaCavenaghiPublished, CavenaghiSperanca+2022+95+104}.

In Section \ref{sec:minkowski}, we use Cheeger deformations to recover a construction for the classical Minkowski spacetime. Based on this, we pass to consider the manifold $\mathrm{SU}(2)$ with an \emph{almost semi-free} isometric action by $\mathrm{S}^1$ and build an (almost) Lorentzian metric on $\mathrm{SU}(2)$ with positive Ricci curvature. More precisely, we consider a $\mathrm{S}^1$ action on $\mathrm{SU}(2)$, which is semi-free outside some fixed points. After taking care of extending a Cheeger deformation of a fixed bi-invariant metric on $\mathrm{SU}(2)$, we construct a semi-Riemannian submersion
\[(\mathrm{SU}(2)\times \bb R,\mathrm{h}_{-r^2}) \rightarrow (\mathrm{SU}(2),\ga_{-r^2})\]
where $\ga_{-r^2}, \mathrm{h}_{-r^2}$ are Lorentzian metrics. An interesting aspect of this construction is that the classical \emph{de Sitter spacetime} has the same topology as $\mathrm{SU}(2)\times \bb R$, but neither $\ga_{-r^2}$ nor $\mathrm{h}_{-r^2}$ are Einstein metrics.

Given the physical interest on $7$-dimensional spheres (classical and exotic), for instance, as described in \cite{cmp/1103943444}, we built explicit Lorentzian metrics on $\mathrm{S}^7$ and $\Sigma_{GM}^7$ (the Gromoll--Meyer exotic sphere (\cite{gromoll1974exotic})) which have equivalent (in some sense), physical properties related to the Lorentzian metrics there built. 
We list our results:
\begin{ithm}
    Let $M'\leftarrow P \rightarrow M$ be a star diagram with $\mathrm{S}^1$ as its structure group. Then there is a Lorentzian metric $\ga_{-r^2}$ in the principal stratum of $M^{reg}$ of the $\star$-action $M$, and a $\ga'_{-r^2}$ Lorentzian metric in the principal stratum ${M'}^{reg}$ of the $\bullet$-action on $M'$ such that the orbit spaces $M/\mathrm{S}^1$ and $M'/\mathrm{S}^1$ are isometric. Moreover, these metrics can be smoothly extended to Lorentzian metrics on $M, M'$ with positive Ricci curvature.
\end{ithm}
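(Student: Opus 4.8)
The plan is to obtain both metrics from a single \emph{negative-parameter Cheeger deformation} carried out on $P$ and then pushed down along the two free circle actions of the star diagram. Fix an $\mathrm{S}^1\times\mathrm{S}^1$--invariant metric $g_P$ on $P$; in all cases of interest $P$ is a compact Lie group and $g_P$ may be taken bi-invariant, so that $\Ricci_{g_P}\geq 0$. Using the free $\star$--action, form $P\times\bb R$ with the \emph{Lorentzian} product metric $\mathrm h_{-r^2}=g_P-\tfrac{1}{r^2}\,dt^{2}$ and quotient by the free $\bb R$--action $s\cdot(p,t)=(s\star p,\,t-s)$. Because $\star$ is free and $P$ compact, $\min_{P}|\xi|^{2}_{g_P}>0$ for the action field $\xi$ of $\star$, so whenever $r^{2}>1/\min_{P}|\xi|^{2}_{g_P}$ the lifted vertical direction $(\xi,-\partial_{t})$ is spacelike and the induced quotient metric $g^{\star}_{-r^{2}}$ on $P$ is a \emph{smooth Lorentzian} metric on all of $P$, with the $\star$--orbit direction timelike; this is the Lorentzian analogue of the Cheeger deformation, and it is why the non-compact factor $\bb R$ (rather than $\mathrm{S}^1$) appears. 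Symmetrically one gets a smooth Lorentzian metric $g^{\bullet}_{-r^{2}}$ on $P$ from the principal (hence free) $\bullet$--action.

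Descend next. As $\bullet$ commutes with $\star$ and with the $\bb R$--shift, it acts isometrically on $(P,g^{\star}_{-r^{2}})$; since $\bullet$ is free, $M=P/\bullet$ carries a semi-Riemannian submersion metric $\ga_{-r^{2}}$. A short computation, using that $g^{\star}_{-r^{2}}$ agrees with $g_P$ on the $g_P$--orthocomplement of the $\star$--orbits, shows that wherever the $\bullet$-- and $\star$--orbit directions are independent the $\bullet$--orbits stay $g^{\star}_{-r^{2}}$--spacelike, so $\ga_{-r^{2}}$ is Lorentzian exactly over the principal stratum $M^{reg}$ of the residual $\star$--action; likewise $g^{\bullet}_{-r^{2}}$ pushes down to a Lorentzian $\ga'_{-r^{2}}$ over ${M'}^{reg}$. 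For the orbit spaces, one has $M^{reg}/\mathrm{S}^1=P^{reg}/(\mathrm{S}^1\times\mathrm{S}^1)={M'}^{reg}/\mathrm{S}^1=:N$, and the key point is the defining feature of Cheeger deformation: deforming $g_P$ along a circle leaves the metric unchanged on vectors $g_P$--orthogonal to that circle's orbits, while the horizontal space of the double quotient $N$ consists precisely of vectors orthogonal to \emph{both} orbit directions. Hence the Riemannian metric $\ga_{-r^{2}}$ induces on $N$ and the one $\ga'_{-r^{2}}$ induces on $N$ both coincide with the metric induced directly by $g_P$, so the orbit spaces $M^{reg}/\mathrm{S}^1$ and ${M'}^{reg}/\mathrm{S}^1$ are isometric.

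It remains to extend $\ga_{-r^{2}},\ga'_{-r^{2}}$ across the singular strata to global Lorentzian metrics on $M,M'$ with positive Ricci curvature. On $M^{reg}$ this combines two inputs: first, the semi-Riemannian analogue of the Cheeger-deformation Ricci estimates from \cite{CavenaghiSperanca+2022+95+104,SperancaCavenaghiPublished}, giving the needed positivity of $\Ricci_{g^{\star}_{-r^{2}}}$; and second, the semi-Riemannian O'Neill formula for $(P,g^{\star}_{-r^{2}})\to(M^{reg},\ga_{-r^{2}})$, in which the fibres are spacelike and the $A$--tensor terms therefore enter with the favourable sign, so that $\Ricci_{\ga_{-r^{2}}}>0$ on $M^{reg}$. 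The extension to all of $M$ (and of $M'$) and the persistence of $\Ricci>0$ at the singular strata of the residual actions is then handled as in the Riemannian case: the pushed-down metric is already smooth there, and strict positivity on the dense open set $M^{reg}$ propagates to the closure by continuity of the curvature tensor, after a localized modification of $g_P$ near the singular orbits of the product action if necessary.

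The main obstacles are two. First, \emph{signature control}: one must choose $g_P$ and the range of $r$ so that $g^{\star}_{-r^{2}}$ is non-degenerate on all of $P$ \emph{and} the $\bullet$--orbits remain spacelike, and, more delicately, understand the causal behaviour over the singular strata of the residual actions, where the two orbit directions become dependent and the naive push-down can change causal type — this is the real content of the word ``extended'' in the statement. Second, and I expect this to be the hardest point, the \emph{Lorentzian Ricci estimate}: the classical Gromoll--Meyer/Cheeger-deformation proofs of $\Ricci>0$ use positive-definiteness of the metric in essential ways (in bounding $|A|^{2}$ from below, and in the behaviour of the orbit tensor under deformation), so each inner-product sign in those arguments has to be re-examined in signature $(-,+,\dots,+)$ — making this precise is where the bulk of the work lies.
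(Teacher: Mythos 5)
Your proposal takes a genuinely different route from the paper's. You perform the negative-parameter deformation on the total space $P$, by passing to $P\times\bb R$ and quotienting by the $\star$-action extended to $\bb R$, and only then push down through $\bullet$. The paper instead deforms the Kaluza--Klein submersion metrics $\ga$ and $\ga'$ directly on $M$ and $M'$ by the residual circle actions, and obtains the orbit-space isometry from the combination of Theorem \ref{thm:password} (Corollary 5.2 of \cite{SperancaCavenaghiPublished}) with the fact that Cheeger deformations preserve horizontal distributions. Your double-quotient argument delivers the isometry $M^{reg}/\mathrm{S}^1\cong {M'}^{reg}/\mathrm{S}^1$ over the regular parts, whereas the paper's route directly gives the metric-space isometry of the full quotients.

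The real gap, however, is at precisely the two places you yourself flag as ``where the bulk of the work lies,'' and neither can be handled ``as in the Riemannian case.'' For the extension across the singular strata, the paper has a concrete mechanism (Theorem \ref{thm:vaiajudar} and the theorem that follows it): at an $\mathrm{S}^1$-fixed point $x$, one identifies the distinguished line $\mathrm{span}_{\bb R}\{X\}\subset T_xM$ along which geodesics emanating from $x$ are timelike for $s>0$ --- these are exactly the fake-horizontal directions with $\lie g_X=\lie{s}^1$ singled out by Lemma \ref{lem:S_X} --- and then defines $\widetilde\ga_{-r^2}$ at $x$ by scaling $\ga$ by $(1-r^2)^{-1}<0$ on that line and leaving $\ga$ unchanged on its $\ga$-orthocomplement. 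A ``localized modification of $g_P$ near the singular orbits'' does not by itself identify which line at $x$ to make timelike so as to glue continuously with $\ga_{-r^2}$ coming in from $M^{reg}$; supplying that selection is the whole content of the extension step.

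For the Ricci bound, the paper does not invoke any pre-existing semi-Riemannian version of the Cheeger-deformation Ricci estimates (the cited references give only the Riemannian case). Instead it carries out an explicit calculation (Theorem \ref{thm:einsteingeneral}), starting from the $\kappa_{-r^2}$ formula of Theorem \ref{thm:curvaturasec} under the totally geodesic fiber normalization, and keeping track of every $A$-tensor term. The output is
\begin{equation*}
\mathrm{Ric}_{\ga_{-r^2}}(\overline X)=\mathrm{Ric}_{M/G}(X)+(1-r^2)^{-2}\sum_i|A^*_{e_i}U^*|^2_{\ga}-3(1-r^2)^{-1}\sum_i|A_Xe_i|_{\ga}^2-(1-r^2)^{-1}|A^*_Xe_n|_{\ga}^2,
\end{equation*}
each term of which is nonnegative for $r>1$, with the almost-semi-free case finished by quoting Corollary~C of \cite{cavenaghiesilva} for the isolated singular orbits. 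Your heuristic that the $A$-tensor enters ``with the favourable sign'' is in the right direction, but the sign-bookkeeping you defer is not a routine transfer of the Riemannian argument --- it is this computation.
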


The following description of vectors guarantees the extension of the metric in the former theorem in the fixed points.

  \begin{ithm}
      Let $(M,\ga)$ be a path-connected semi-Riemannian manifold with an effective isometric action by $\mathrm{S}^1$ and $x$ be a $\mathrm{S}^1$-fixed point. Assuming that $\ga_{-r^2}$ is a Lorentzian metric in the principal stratum of $M$ induced by a $-r^{2}$-Cheeger deformation of $\ga$, then given $X\in T_xM$, the geodesic generated by $s\mapsto \gamma(s)$ is time-like for any $s>0$ if, and only if, $\lie g_X = \lie{s}^1$, where $\lie g_X$ is the Lie algebra of the isotropy subgroup at $X$ of the isotropy representation in $T_xM$.
  \end{ithm}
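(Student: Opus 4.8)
The plan is to split the argument into a soft representation-theoretic reduction and a geometric core concentrated near the fixed stratum. First I would reinterpret the hypothesis $\lie g_X=\lie s^1$: the isotropy representation of $\mathrm{S}^1$ on $T_xM$ decomposes $\ga$-orthogonally as $T_xM=W\oplus W^\perp$, where $W=(T_xM)^{\mathrm{S}^1}$ is the fixed subspace and $W^\perp$ is a sum of nontrivial two-dimensional weight spaces; a nonzero vector with a component in a weight-$k$ block ($k\neq 0$) has isotropy $\bb Z_{|k|}$, hence trivial Lie algebra, whereas every $X\in W$ has isotropy all of $\mathrm{S}^1$. So $\lie g_X=\lie s^1$ iff $X\in W=T_x(M^{\mathrm{S}^1})$, and the assertion becomes: $\gamma(s)=\exp_x(sX)$ has timelike velocity for all $s>0$ exactly when $X$ is tangent to the fixed-point set at $x$.

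For the ``if'' direction I would use that Cheeger deformation preserves $\mathrm{S}^1$-invariance, so $\mathrm{S}^1$ acts isometrically on $(M,\ga_{-r^2})$ (and on its extension) fixing $x$. Since the fixed-point set of a family of isometries is totally geodesic with tangent space $W$ at $x$, uniqueness of geodesics together with equivariance of $\exp$ forces $\gamma$ to remain in $M^{\mathrm{S}^1}$ whenever $X\in W$; it then suffices to show that the extended $\ga_{-r^2}$ is negative definite along $TM^{\mathrm{S}^1}$, which makes $\gamma'(s)$ timelike for all $s$. Here I would unwind the $-r^2$-Cheeger construction of Theorem A near the fixed locus: realizing $\ga_{-r^2}$ as the base of the semi-Riemannian submersion $(M\times\bb R,\mathrm h_{-r^2})\to(M,\ga_{-r^2})$, I would compute horizontal lifts of vectors tangent to $M^{\mathrm{S}^1}$ and show that as the $\mathrm{S}^1$-orbits collapse the negative-definite $\bb R$-direction of $\mathrm h_{-r^2}$ is carried precisely onto $TM^{\mathrm{S}^1}$ --- equivalently, this negative-definiteness is exactly the condition that lets the principal-stratum metric be smoothly completed across the fixed stratum as a Lorentzian metric, which is the content of Theorem A.

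For the ``only if'' direction (contrapositive), suppose $X\notin W$. Then $\gamma'(0)=X$ is transverse to $M^{\mathrm{S}^1}$, so $\gamma(s)$ lies in the principal stratum $M^{reg}$ --- whose complement is $M^{\mathrm{S}^1}$ by almost-semi-freeness --- for all small $s\neq 0$. On $M^{reg}$ the deformed metric is given in closed form by $\ga_{-r^2}(Y,Y)=\ga(Y,Y)-\ga(Y,\xi)^2/(\|\xi\|_\ga^2-r^2)$, with $\xi$ the action field normalized so that $-r^2$ enters this way. Evaluating along $\gamma$ and letting $s\to 0^+$: since $\gamma(s)\to x$ we have $\xi_{\gamma(s)}\to 0$ and $\gamma'(s)\to X$, whence $\ga_{-r^2}(\gamma'(s),\gamma'(s))\to\ga(X,X)>0$. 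So $\gamma'(s)$ is spacelike for small $s>0$ and $\gamma$ fails to be timelike for all $s>0$. (Should exceptional orbits occur, I would note that $\ga_{-r^2}$ still extends smoothly and Lorentzianly across them, so the same limit applies.)

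The hard part will be the negative-definiteness of the extension along $M^{\mathrm{S}^1}$ used in the ``if'' direction: one has to produce the Lorentzian extension of $\ga_{-r^2}$ across the fixed stratum promised by Theorem A, verify that it restricts to a negative-definite form there, and check that it glues smoothly to the principal-stratum formula. I expect this to require a careful local model --- a normal slice to $M^{\mathrm{S}^1}$, the weight decomposition of the slice representation, and a precise bookkeeping of how the parameter $-r^2$ competes with the vanishing of $\|\xi\|_\ga$ as one approaches the fixed locus. Everything else --- the algebra of the first step, the invariance and total-geodesy input, and the $s\to 0^+$ limit on the principal stratum --- should be routine.
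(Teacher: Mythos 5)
Your proposal follows a genuinely different route from the paper's. The paper's proof of Theorem~\ref{thm:vaiajudar} goes through Lemma~\ref{lem:S_X} (Lemma~3.2 of~\cite{cavenaghiesilva}): it introduces the operator $\tilde S_X\co\lie g\to T_xM$, $U\mapsto\nabla_XU^*_x$, uses that $\ker(\tilde S_X|_{\lie g_x})=\lie g_X$, and observes that since $\lie g_x=\lie{s}^1$ is one-dimensional at a fixed point, the dichotomy $\lie g_X\in\{0,\lie{s}^1\}$ becomes $\tilde S_X\neq 0$ versus $\tilde S_X\equiv 0$; the second alternative is the \emph{fake-horizontal-vector} condition, from which the paper then asserts (without further expansion, and without a separate treatment of the ``only if'' direction) that $\gamma$ is time-like for $s>0$. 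Your first step --- reinterpreting $\lie g_X=\lie{s}^1$ as $X\in W:=(T_xM)^{\mathrm{S}^1}=T_x(M^{\mathrm{S}^1})$ via the weight decomposition of the slice representation --- is equivalent but cleaner, and your ``only if'' argument via the closed-form Cheeger expression and the limit $\ga_{-r^2}(\gamma'(s),\gamma'(s))\to\ga(X,X)>0$ as $s\to0^+$ is concrete, correct, and in fact more explicit than anything the paper provides for that direction. (Minor: with the orbit tensor $P=\|\xi\|_{\ga}^2$ one gets $\ga_{-r^2}(Y,Y)=\ga(Y,Y)-\ga(Y,\xi)^2/(\|\xi\|_{\ga}^2-r^{-2})$, so it is $r^{-2}$, not $r^2$, that enters the denominator; your limit is unaffected.)

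The gap is in your ``if'' direction, and it is more than the bookkeeping you flag as hard. You set as your goal to show the extended metric $\widetilde\ga_{-r^2}$ is \emph{negative-definite} along $TM^{\mathrm{S}^1}$. But the extension the paper actually constructs (in the theorem immediately following~\ref{thm:vaiajudar}) flips the sign of $\ga$ only on a single line $\mathrm{span}_{\bb R}\{X\}\subset W$ and keeps $\ga$ on the $\ga$-orthogonal complement; thus $\widetilde\ga_{-r^2}$ has index one on $T_xM$ and in particular is \emph{not} negative-definite on $W$ whenever $\dim W\geq 2$. In that case a generic $X\in W$ is space-like for $\widetilde\ga_{-r^2}$, so it cannot be that every $X\in W$ generates a time-like geodesic: the statement can only hold for the one direction that has been declared negative. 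Aiming at negative-definiteness of all of $TM^{\mathrm{S}^1}$ therefore heads away from what the construction can deliver and would, if successful, contradict the Lorentzian signature of the extension. What is actually missing --- and what neither you nor the paper's terse proof supplies --- is a mechanism (e.g.\ a canonical limiting direction of the collapsing $\mathrm{S}^1$-orbits, encoded in $\tilde S_X$) that singles out \emph{which} direction in $W$ is selected, so that the choice of $X$ in the extension is not arbitrary and the ``if'' direction has a chance of being true.
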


  \begin{icor}
      The classical sphere $\mathrm{S}^7$ and the Gromoll--Meyer exotic sphere $\Sigma_{GM}^7$ admit time-oriented Lorentzian metrics of positive Ricci curvature with isometric semi-free actions of $\mathrm{S}^1$ on an open and dense subset (regular stratum). Moreover, these actions fix some points out of the regular stratum, and the orbit-spaces $\mathrm{S}^7/\mathrm{S}^1$ and $\Sigma_{GM}^7/\mathrm{S}^1$ are isometric as metric spaces.
  \end{icor}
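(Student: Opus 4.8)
The statement is engineered to fall out of Theorems A and B once the right input is supplied, so the plan has three parts: (i) produce the ambient star diagram together with the circle actions on $\mathrm{S}^7$ and $\Sigma_{GM}^7$; (ii) harvest the Lorentzian metrics, the positive Ricci curvature, and the isometry of orbit spaces from Theorem A; (iii) harvest the time orientation and the legitimacy of the extension across the singular set from Theorem B. For (i) I would use the classical Gromoll--Meyer biquotient \cite{gromoll1974exotic}: on $\mathrm{Sp}(2)$ one has the commuting free actions $A\cdot q=A\,\mathrm{diag}(1,q)$, whose quotient is $\mathrm{S}^7$ (the first column), and $q\star A=\mathrm{diag}(q,q)\,A\,\mathrm{diag}(\bar q,1)$, whose quotient is $\Sigma_{GM}^7$, giving the star diagram $\Sigma_{GM}^7\leftarrow\mathrm{Sp}(2)\to\mathrm{S}^7$ of \cite{SperancaCavenaghiPublished, CavenaghiSperanca+2022+95+104}; restricting each residual action to a circle subgroup $\mathrm{S}^1\subset\mathrm{Sp}(1)$ produces the circle actions of the statement. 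A direct computation identifies the descended action on $\mathrm{S}^7\subset\mathbb{H}^2$: writing $\mathbb{H}=\mathbb{C}\oplus\mathbb{C}j$, it rotates only the $\mathbb{C}j$-components of the two coordinates, so (after passing to the effective circle) it is semi-free, with fixed-point set $\{(u,v)\in\mathrm{S}^7:u,v\in\mathbb{C}\}\cong\mathrm{S}^3$ of codimension four; its complement is the open dense regular stratum. The mirror computation gives an $\mathrm{S}^1$-action of the same type on $\Sigma_{GM}^7$, again with an $\mathrm{S}^3$ of fixed points.

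For (ii) I feed this data into Theorem A, which returns Lorentzian metrics $\ga_{-r^2}$ on $\mathrm{S}^7$ and $\ga'_{-r^2}$ on $\Sigma_{GM}^7$ --- built on the regular strata by a $-r^2$-Cheeger deformation and then smoothly extended --- of strictly positive Ricci curvature, for which the circle acts isometrically and the orbit spaces $\mathrm{S}^7/\mathrm{S}^1$ and $\Sigma_{GM}^7/\mathrm{S}^1$ are isometric as metric spaces; concretely the two orbit spaces are the quotient of the Cheeger-deformed $\mathrm{Sp}(2)$ by the same subgroup, once one composes with the isometry of $\mathrm{Sp}(2)$ (inversion followed by the coordinate interchange) that swaps $\bullet$ and $\star$.

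For (iii) I apply Theorem B at each fixed point $x$ of either circle action. There the isotropy representation on $T_xM$ splits as $T_x\mathrm{S}^3\oplus\mathbb{C}^2$ with $\mathrm{S}^1$ trivial on the first summand and the standard rotation on $\mathbb{C}^2$, so the directions with $\lie g_X=\lie{s}^1$ are exactly those tangent to the fixed $\mathrm{S}^3$; Theorem B then describes precisely the time-like geodesics issuing from $x$. In particular the extended metric is genuinely Lorentzian near the fixed set (the normal $\mathbb{C}^2$ being space-like), and propagating the future-pointing timelike direction from the connected regular stratum --- where the $\mathrm{S}^1$-orbit field is timelike after the negative Cheeger deformation --- produces a global time orientation. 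Geodesic completeness is then automatic from the compactness of $\mathrm{S}^7$ and $\Sigma_{GM}^7$.

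The main obstacle I anticipate is step (i): since $\mathrm{S}^7$ and $\Sigma_{GM}^7$ are not diffeomorphic, one must identify the ambient space of the diagram carefully rather than take a product, and one must keep an exact account of the isotropy representations and fixed-point sets on \emph{both} spheres so that Theorem B applies uniformly at every fixed point and so that the orbit-space isometry of Theorem A can be exhibited explicitly. Granting this bookkeeping, positive Ricci curvature, the orbit-space isometry, the time orientation, and completeness are all formal consequences of Theorems A and B.
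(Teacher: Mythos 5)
Your proposal follows essentially the same route as the paper: set up the Gromoll--Meyer star diagram $\Sigma_{GM}^7\leftarrow\mathrm{Sp}(2)\to\mathrm{S}^7$, restrict the residual commuting $\mathrm{SU}(2)$-actions to $\mathrm{S}^1$, and feed the resulting almost semi-free isometric circle actions into Theorems~A and~B for the Lorentzian metrics, positive Ricci curvature, time orientation, and orbit-space isometry. Two comments are worth making.

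First, your identification of the fixed-point set on $\mathrm{S}^7$ as $\mathrm{S}^7\cap(\mathbb{C}\times\mathbb{C})\cong\mathrm{S}^3$ (the locus where both quaternionic coordinates commute with $\mathbf i$) is in fact the correct one. Note that the paper's Lemma~\ref{lem:fixedpointssu2} claims the fixed set is $\mathrm{S}^7\cap(\mathbb{R}\times\mathbb{R})\cong\mathrm{S}^1$, but this is inconsistent with the action $q\cdot(a,b)=(qa\bar q,qb\bar q)$: any pair with $a,b\in\mathrm{span}_{\mathbb{R}}\{1,\mathbf i\}$ is fixed since conjugation by $e^{\mathbf i\theta}$ leaves the $\mathbb{C}$-part of a quaternion unchanged and rotates only the $\mathbb{C}\mathbf j$-part, so the fixed set is $\mathrm{S}^3$, not $\mathrm{S}^1$. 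Your bookkeeping here is more careful than the paper's; since the corollary only requires that some points be fixed outside an open dense regular stratum, the discrepancy is immaterial to the statement, but it matters when one invokes Theorem~B at each fixed point, as your step~(iii) correctly does with the normal $\mathbb{C}^2$-representation.

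Second, the closing sentence ``Geodesic completeness is then automatic from the compactness of $\mathrm{S}^7$ and $\Sigma_{GM}^7$'' is a genuine error: compactness does not imply geodesic completeness for Lorentzian manifolds (the paper itself cites Wolf's example and dedicates a separate Theorem~D to completeness via the $\mathrm{S}^1$-fat dual-foliation argument). Fortunately, the corollary you are proving makes no claim about completeness, so this slip does not invalidate your proof; simply delete that sentence, or replace it with a pointer to the paper's completeness result.
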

   A possible conclusion for the former corollary is that Einstein's equations recover classical and exotic differential topology on spacetimes. We want to thank Prof. Ernesto Lupercio for pointing it out.

  A further result is proved to ensure completeness of the built metrics on exotic and classical spheres:
  \begin{ithm}
        Let $M'\leftarrow P \rightarrow M$ be a star diagram with $\mathrm{S}^1$ as its structure group. Then there is a Lorentzian metric $\widetilde \ga_{-r^2}$ in $M$, and a Lorentzian metric ${\widetilde\ga}'_{-r^2}$ in $M'$, with complete space and time-like geodesics.
  \end{ithm}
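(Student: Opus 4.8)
The plan is to reduce the completeness of $(M,\widetilde\ga_{-r^2})$ --- and, by the symmetry of the star diagram $M'\leftarrow P\rightarrow M$, simultaneously that of $(M',{\widetilde\ga}'_{-r^2})$ --- to a completeness statement on the total space of the semi-Riemannian submersion that produced the metrics. Recall from the constructions underlying Theorems~A and~B that over the regular stratum one has a semi-Riemannian submersion $\rho\colon (E,\mathrm h_{-r^2})\to (M^{reg},\ga_{-r^2})$, where $E$ fibers along $\bb R$ over a compact manifold (this is the $\mathrm{SU}(2)\times\bb R$ model of the introduction) and where the $\bb R$-direction is a Killing field $T$ of \emph{constant} timelike norm, $\mathrm h_{-r^2}(T,T)\equiv -r^{-2}$. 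The first step is to record, along any geodesic $\gamma$ of $\mathrm h_{-r^2}$, the energy $\varepsilon:=\mathrm h_{-r^2}(\dot\gamma,\dot\gamma)$ and, since $T$ is Killing, the momentum $a:=\mathrm h_{-r^2}(\dot\gamma,T)$, both constant in the affine parameter.

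Decomposing $\dot\gamma$ along $T$ and along its $\mathrm h_{-r^2}$-orthogonal (hence spacelike) complement, the two conservation laws show that the $T$-component of $\dot\gamma$ is the constant vector field $-r^{2}a\,T$ while the spacelike component has constant $\mathrm h_{-r^2}$-length equal to $\varepsilon+r^{2}a^{2}$. The reverse Cauchy--Schwarz inequality for the causal pair $\dot\gamma,T$ gives $a^{2}\ge -\varepsilon r^{-2}$ whenever $\gamma$ is timelike, so $\varepsilon+r^{2}a^{2}\ge 0$; and $\varepsilon+r^{2}a^{2}>0$ is immediate when $\gamma$ is spacelike. Consequently, for every spacelike or timelike geodesic, $\dot\gamma$ has constant norm with respect to the complete, $\bb R$-translation--invariant Riemannian metric $g_R:=\mathrm h_{-r^2}+2r^{-2}\,\theta\otimes\theta$ on $E$, where $\theta:=-r^{2}\mathrm h_{-r^2}(T,\cdot)$ (completeness holds because $g_R$ is $\bb R$-invariant and $E$ fibers over a compact base along $T$, so the $\bb R$-coordinate has bounded $g_R$-gradient). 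The escape lemma then forces every inextensible spacelike or timelike geodesic of $\mathrm h_{-r^2}$ to be defined on all of $\bb R$, and since geodesics of $\ga_{-r^2}$ on $M^{reg}$ are exactly the $\rho$-projections of horizontal geodesics of $\mathrm h_{-r^2}$ --- complete if and only if their horizontal lifts are --- the same follows for the spacelike and timelike geodesics of $\ga_{-r^2}$ on $M^{reg}$. Morally this is the Romero--S\'anchez principle that a compact Lorentzian manifold carrying a timelike conformal Killing field is geodesically complete, transported to $E$, where the relevant Killing field does not degenerate.

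The remaining, and main, difficulty is the passage from $M^{reg}$ to all of $M$. The $\mathrm{S}^{1}$-action on $M$ is free on the open dense set $M^{reg}$ but has fixed points along a singular set $F$ (Corollary~C), where $\rho$ degenerates and the lift above is no longer available; one must show that a spacelike or timelike geodesic of the \emph{smooth} metric $\widetilde\ga_{-r^2}$ cannot approach $F$ with unbounded velocity, so that it continues across $F$ by ODE uniqueness. Here I would work in the isotropy-representation slice at a fixed point and apply Theorem~B, which pins down precisely which tangent directions at a fixed point issue timelike geodesics --- those whose isotropy algebra is $\lie{s}^{1}$ --- thereby confining the causal geodesics that can limit onto $F$ to a controlled family along which the estimate that the spacelike part of $\dot\gamma$ has constant length (read now in the slice model, where the metric is the smooth extension) survives in the limit, so that $(\gamma(s),\dot\gamma(s))$ stays in a compact subset of $TM$ as $s$ tends to any finite parameter. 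Patching this local analysis with the preceding paragraph on the intervals where the geodesic remains in $M^{reg}$, and repeating it at the fixed-point strata of both the $\bullet$- and the $\star$-action, yields complete spacelike and timelike geodesics on $(M,\widetilde\ga_{-r^2})$ and on $(M',{\widetilde\ga}'_{-r^2})$. I expect the patching across $F$ to be the crux of the argument; it is also why the statement is restricted to spacelike and timelike geodesics, since the control at $F$ provided by Theorem~B is available only for these causal types.
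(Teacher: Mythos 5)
Your route is genuinely different from the paper's. The paper attacks the completeness claim through $\mathrm{S}^1$-fat bundles and the dual leaf foliation: for the unique connection metric, the vertizontal sectional curvature of $\widetilde\ga_{-r^2}$ is $-(1-r^2)^{-1}|A^*_XV|^2_\ga$, strictly negative by fatness, so dual holonomy fields along horizontal geodesics have constant norm and the dual leaf through any point is open, hence coincides with $M$ (with $M^{reg}$ in the almost semi-free case); completeness is then read off from this and the compactness of the structure group. You instead exploit the product structure of the Cheeger total space directly: the Killing field $T$ of constant timelike norm $-r^{-2}$ yields two constants of motion $\varepsilon = h_{-r^2}(\dot\gamma,\dot\gamma)$ and $a = h_{-r^2}(\dot\gamma,T)$, your auxiliary metric $g_R = h_{-r^2} + 2r^{-2}\,\theta\otimes\theta$ is nothing but the complete Riemannian product of the base metric with $r^{-2}\,dt^2$, and $|\dot\gamma|^2_{g_R} = \varepsilon + 2r^2a^2 > 0$ is constant along every spacelike or timelike geodesic, so the escape lemma applies. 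This is the Romero--S\'anchez mechanism. It needs neither fatness nor the dual holonomy apparatus and is considerably more transparent about \emph{why} the velocity stays bounded; the paper's argument, in exchange, also yields the triviality of the dual foliation as a geometric by-product.

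The genuine gap in your proposal — and you flag it yourself — is the continuation across the fixed-point set $F$. Your submersion argument produces complete causal $\ga_{-r^2}$-geodesics on $M^{reg}$, but $\widetilde\ga_{-r^2}$ is a \emph{different} smooth Lorentzian metric near $F$, the one glued in in Section~\ref{sec:fixing}, and a $\widetilde\ga_{-r^2}$-geodesic approaching $F$ does not project from any horizontal geodesic of $h_{-r^2}$, so the conservation laws upstairs no longer speak. Theorem~B only classifies which \emph{directions} at a fixed point issue time-like geodesics; it does not bound $|\dot\gamma|$ in a slice as $\gamma$ limits onto $F$, so it cannot by itself rule out blow-up of the velocity before the crossing. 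To close this you would need, for example, an a priori estimate on $|\dot\gamma|$ in the unit $\widetilde\ga_{-r^2}$-tangent bundle over a tubular neighborhood of $F$, or a direct analysis of the geodesic equation in the linearized slice coordinates of Lemma~\ref{lem:S_X}. Note that the paper's own dual leaf theorem is likewise confined to $M^{reg}$ in the almost semi-free case, so this crossing estimate is the essential missing ingredient on both routes, not just yours.
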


\section{Fixing the notation and a minimum of Lorentzian geometry}

Every aspect of this section is deeply based in \cite{victoria2010introduction, oneillbook}.

\subsection{Recordatory aspects of Linear algebra of indefinite metrics}
 A semi-Euclidean space $V$ is a finite-dimensional real $n$-dimensional vector space with a \emph{non-degenerate} quadratic form $q$. Such a quadratic form can, via a choice of basis $\cal B:= \{e_1,\ldots,e_n\}$, be computed in any vector $v\in V$ furnishing
 \[q(v) = v_1^2 + \ldots v_p^2 - (v_{p+1}^2 + \ldots v_{p+q}^2).\]

 The quadratic form $q$ gives rise to a symmetric bilinear $\ga$ form via the classical polarization formula. Given $v,w \in V$ then $v\perp w$ (i.e., $v$ and $w$ are orthogonal) if $\ga(v,w) = 0$. Two vector subspaces $A, B \subset V$ are said to be orthogonal, and we denote $A\perp B$ if $v\perp w \forall v\in A,~w\in B$. Fixed any vector subspaces $A \subset V$ we define $A^{\perp}
:= \{w\in V : \ga(v,w) = 0, \forall v\in A\}.$

\begin{definition}
Let $g$ be a symmetric bilinear form. Given $v\in V$, $q_{\ga}(v) := \ga(v,v)$, we say that $v$ is
\begin{enumerate}[(i)]
    \item time-like if $q_{\ga}(v) < 0$,
\item light-like if $q_{\ga}(v) = 0$ and $v\neq 0$,
\item space-like if $q_{\ga}(v) > 0$,
\item causal if $v$ is time-like or light-like.
\end{enumerate}
\end{definition}

\begin{definition}
    A scalar product $\ga$ on $V$ is a non-degenerate symmetric bilinear form.
\end{definition}

Let $\{e_1,\ldots,e_n\}$ be a basis of $V$. We say that this basis is \emph{orthonormal} if $|e_i| := \sqrt{|\ga(e_i,e_i)|} = 1$ and $\ga(e_i,e_j) = 0$ if $i\neq j$. A crucial result for the linear theory of symmetric bilinear forms is based upon the following:
\begin{proposition}
    The number $\nu$ of time-like vectors in a basis $\cal B$ of $(V,\ga)$ does not
depend on the basis but only on $(V,\ga)$, and it is called the \emph{index} of $(V,\ga)$. Moreover, $(V,\ga)$ admits an orthonormal basis.
\end{proposition}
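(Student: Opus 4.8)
The plan is to establish the two assertions in the reverse of the order stated: first I would prove that $(V,\ga)$ admits an orthonormal basis, and only then the invariance of the number of time-like members of such a basis. The statement is to be understood, as usual, for \emph{orthonormal} bases --- for an arbitrary basis the count is not an invariant, as already $\bb R^2$ equipped with a Lorentzian form shows --- which is precisely why the existence part must be secured first.

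For existence I would argue by induction on $n=\dim V$, the case $n=0$ being vacuous. For the inductive step, the key observation is that non-degeneracy of $\ga$ forces the existence of an \emph{anisotropic} vector, i.e. some $u\in V$ with $\ga(u,u)\neq 0$: indeed, if $\ga(v,v)=0$ for every $v$, the polarization identity $\ga(v,w)=\tfrac12\big(q_\ga(v+w)-q_\ga(v)-q_\ga(w)\big)$ would give $\ga\equiv 0$, contradicting non-degeneracy. Set $e_1:=u/|u|$, so that $\ga(e_1,e_1)=\pm 1$. Then $V=\bb R e_1\oplus e_1^{\perp}$: the sum is direct because $e_1\notin e_1^{\perp}$, and it exhausts $V$ by a dimension count, since $\ga(\cdot,e_1)$ is a nonzero linear functional with kernel $e_1^{\perp}$. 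One next checks that $\ga$ restricted to $W:=e_1^{\perp}$ is again non-degenerate: if $w\in W$ satisfies $\ga(w,\cdot)|_W=0$, then writing an arbitrary $v\in V$ as $v=\lambda e_1+w'$ with $w'\in W$ gives $\ga(w,v)=\lambda\,\ga(w,e_1)+\ga(w,w')=0$, hence $w=0$. The inductive hypothesis applied to $(W,\ga|_W)$ yields an orthonormal basis $\{e_2,\dots,e_n\}$ of $W$, and $\{e_1,\dots,e_n\}$ is then the sought orthonormal basis of $V$.

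For the invariance, let $\cal B=\{e_1,\dots,e_n\}$ and $\cal B'=\{e_1',\dots,e_n'\}$ be two orthonormal bases, with $\nu$ (resp. $\nu'$) time-like vectors; after reindexing assume $e_1,\dots,e_\nu$ and $e_1',\dots,e_{\nu'}'$ are the time-like ones. Consider $S:=\mathrm{span}\{e_{\nu+1},\dots,e_n\}$, on which $q_\ga$ is positive definite, and $T':=\mathrm{span}\{e_1',\dots,e_{\nu'}'\}$, on which $q_\ga$ is negative definite. If $\nu<\nu'$ then $\dim S+\dim T'=(n-\nu)+\nu'>n$, so $S\cap T'$ contains a nonzero vector $w$; but then $q_\ga(w)>0$ and $q_\ga(w)<0$ simultaneously, a contradiction. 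Hence $\nu\geq\nu'$, and exchanging the roles of the two bases gives $\nu=\nu'$, so that $\nu$ depends only on $(V,\ga)$.

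The only genuinely delicate points are the two already flagged: (i) verifying that non-degeneracy is inherited by the orthogonal complement $e_1^{\perp}$, which is what makes the induction close; and (ii) keeping in mind that the asserted invariance concerns orthonormal bases, so that the existence statement is logically prior. Everything else is standard Gram--Schmidt and Sylvester bookkeeping.
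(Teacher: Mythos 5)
Your argument is correct, and it is the classical proof of Sylvester's law of inertia: construct an orthonormal basis inductively by peeling off an anisotropic vector (the polarization identity rules out total isotropy, and non-degeneracy is shown to pass to $e_1^\perp$), then derive the invariance of $\nu$ from a dimension count on $S\cap T'$. The paper in fact states this proposition without proof, treating it as background recalled from the cited references \cite{victoria2010introduction, oneillbook}, so there is no in-paper argument to compare against. Your flagged point (i) --- that $\ga|_{e_1^\perp}$ must be checked to be non-degenerate for the induction to close --- is precisely the step that is most often elided, and you handle it correctly; your flagged point (ii), that the invariance claim is meaningful only for \emph{orthonormal} bases, is a legitimate clarification of an imprecision in the statement as printed, consistent with the paragraph immediately preceding the proposition where orthonormal bases are introduced.
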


\begin{definition}
    A scalar product $\ga$ in a real vector space $V$ is
    \begin{enumerate}[(i)]
        \item Euclidean if $\nu = 0$,
        \item Lorentzian if $\nu = 1$ and $n\geq 2$,
        \item It is indefinite if it is in a symmetric bilinear form.
    \end{enumerate}
\end{definition}

\begin{proposition}
    The subset of the time-like vectors (resp., causal; light-like if $n>2$) has two connected components.
Each one of these parts will be called a time-like cone (resp. causal
cone; light-like cone). Moreover, two time-like vectors $v$ and $w$ lie in the same
time-like cone if, and only if, $\ga(v,w) < 0$. In particular, each time-like cone is convex.
\end{proposition}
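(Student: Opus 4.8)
The plan is to reduce the whole statement to one fixed orthonormal basis together with the ordinary Cauchy--Schwarz inequality on a Euclidean subspace. Since $\ga$ is Lorentzian ($\nu=1$), fix an orthonormal basis $\{e_1,\dots,e_n\}$ with $q_{\ga}(e_1)=-1$ and $q_{\ga}(e_i)=1$ for $i\ge 2$, and decompose each $v$ as $v=v_1e_1+\bar v$ with $\bar v$ in the Euclidean subspace $E:=\mathrm{span}\{e_2,\dots,e_n\}$, so that $q_{\ga}(v)=-v_1^2+|\bar v|_E^2$, where $|\cdot|_E$ is the Euclidean norm on $E$. The first point is that every causal vector (hence every time-like or light-like vector) has $v_1\ne 0$: otherwise $|\bar v|_E^2=q_{\ga}(v)\le 0$ forces $\bar v=0$ and $v=0$. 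Therefore the continuous linear functional $v\mapsto v_1$ never vanishes on the causal set, which splits into the relatively open and closed pieces $\{v_1>0\}$ and $\{v_1<0\}$; likewise for the time-like set and the light-like set. This already gives ``at least two components'' in each case.

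Next I would check that each piece is connected, so that there are exactly two. For the time-like and the causal pieces I claim the ``future'' piece is star-shaped about $e_1$: along $v_t=(1-t)v+te_1$ the first coordinate is $(1-t)v_1+t>0$, while $q_{\ga}(v_t)=(1-t)^2q_{\ga}(v)+2t(1-t)\ga(v,e_1)+t^2q_{\ga}(e_1)$ with $\ga(v,e_1)=-v_1<0$, $q_{\ga}(e_1)=-1<0$ and $q_{\ga}(v)\le 0$, so $q_{\ga}(v_t)<0$ (resp.\ $\le 0$) for all $t\in[0,1]$; the past piece is star-shaped about $-e_1$. For the light-like set with $n>2$, the future piece $\{v_1>0,\ |\bar v|_E=v_1\}$ is homeomorphic to $\bb R_{>0}\times \mathrm S^{n-2}$ via $v\mapsto (v_1,\bar v/v_1)$, hence connected because $\mathrm S^{n-2}$ is connected; this is precisely where $n>2$ enters (for $n=2$ one gets four rays instead).

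For the characterization I would write $\ga(v,w)=-v_1w_1+\lr{\bar v,\bar w}_E$ for time-like $v,w$. Time-likeness gives $|\bar v|_E<|v_1|$ and $|\bar w|_E<|w_1|$, so Cauchy--Schwarz yields $|\lr{\bar v,\bar w}_E|\le |\bar v|_E|\bar w|_E<|v_1w_1|$; hence $\ga(v,w)$ has the same sign as $-v_1w_1$, and $\ga(v,w)<0$ exactly when $v_1$ and $w_1$ share a sign, i.e.\ when $v,w$ lie in the same time-like cone. Convexity follows at once: if $v,w$ are in the same cone and $t\in[0,1]$, then the first coordinate of $v_t=(1-t)v+tw$ keeps that common sign and $q_{\ga}(v_t)=(1-t)^2q_{\ga}(v)+2t(1-t)\ga(v,w)+t^2q_{\ga}(w)$ is a sum of three non-positive terms that cannot all vanish, so $q_{\ga}(v_t)<0$ and $v_t$ remains in the same cone.

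The only genuinely delicate steps are the two ``reverse Cauchy--Schwarz'' uses — that $v_1\ne 0$ for causal $v$, and that $\ga(v,w)$ has the sign of $-v_1w_1$ — which are where non-degeneracy and $\nu=1$ are really being used, and the bookkeeping in the light-like case, where one must keep $\mathrm S^{n-2}$ connected (forcing $n>2$) and recall that light-like cones are not convex, so the final sentence of the statement is asserted for time-like cones only. Everything else is a routine expansion of a quadratic form along a line segment.
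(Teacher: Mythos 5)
Your proof is correct and is essentially the standard textbook argument (the paper itself does not prove this proposition, stating it as recalled background from the cited references on Lorentzian geometry, which use the same orthonormal-basis decomposition $v=v_1e_1+\bar v$ and the reverse Cauchy--Schwarz inequality). The two delicate points you flag --- that $v_1\neq 0$ on the causal set, and that $\ga(v,w)$ carries the sign of $-v_1w_1$ --- are handled correctly, and the star-shapedness about $\pm e_1$ together with the $\bb R_{>0}\times\mathrm S^{n-2}$ identification cleanly pins down exactly two components in each case, including the explanation of why $n>2$ is needed for the light-like cone. No gaps.
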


\begin{definition}
Let $(V,\ga)$ be a Lorentzian vector space. We will say that a
subspace of $W\subset V$ is
\begin{enumerate}
    \item space-like, if $\ga|_{W}$ is Euclidean,
    \item time-like, if $\ga|_{W}$ is non-degenerated with index $1$ (that is,
it is Lorentzian whenever $\dim W\geq 2$),
\item light-like if $g|_{W}$ is degenerated
\end{enumerate}
\end{definition}

The following proposition plays an essential (implicit though) role in this paper:
\begin{proposition}
    A subspace $W\subset V$ is time-like if, and only if, $W^{\perp}$ is space-like.
\end{proposition}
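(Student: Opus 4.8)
The plan is to reduce everything to two standard facts about a scalar product $\ga$ on a finite-dimensional $V$: first, that $\dim W + \dim W^{\perp} = \dim V$ and $(W^{\perp})^{\perp} = W$ for every subspace $W$; second, that $W$ is non-degenerate (i.e. $\ga|_W$ is itself a scalar product) if and only if $W\cap W^{\perp} = \{0\}$, if and only if $V = W\oplus W^{\perp}$ as a $\ga$-orthogonal direct sum, if and only if $W^{\perp}$ is non-degenerate. Both are immediate from non-degeneracy of $\ga$ on $V$ together with a dimension count, so I would record them as a preliminary observation rather than belabor them.

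For the forward implication, assume $W$ is time-like. By definition $\ga|_W$ is non-degenerate of index $1$, so the preliminary observation gives $V = W\oplus W^{\perp}$ orthogonally and $\ga|_{W^{\perp}}$ non-degenerate. Concatenating an orthonormal basis of $W$ with one of $W^{\perp}$ produces an orthonormal basis of $V$, so, by the Proposition on the invariance of the index, $\nu(V) = \nu(W) + \nu(W^{\perp})$. Since $\nu(V) = 1$ and $\nu(W) = 1$, we conclude $\nu(W^{\perp}) = 0$; combined with non-degeneracy this means $\ga|_{W^{\perp}}$ is Euclidean, i.e. $W^{\perp}$ is space-like.

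For the converse, assume $W^{\perp}$ is space-like, so $\ga|_{W^{\perp}}$ is positive definite and in particular non-degenerate. Applying the preliminary observation to $W^{\perp}$, and using $(W^{\perp})^{\perp} = W$, we get $V = W^{\perp}\oplus W$ orthogonally and $\ga|_W$ non-degenerate. The same index-additivity argument yields $1 = \nu(V) = \nu(W^{\perp}) + \nu(W) = 0 + \nu(W)$, hence $\nu(W) = 1$; together with non-degeneracy this is precisely the statement that $W$ is time-like.

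The only genuine subtlety is the step asserting that a time-like (or space-like) $W$ forces $W^{\perp}$ to be non-degenerate: this is why the equivalence ``$W$ non-degenerate $\iff$ $W\cap W^{\perp} = \{0\}$ $\iff$ $W^{\perp}$ non-degenerate'' must be established before the index count is attempted. Everything else is bookkeeping with orthonormal bases and the already-cited invariance of the index, so I expect no further obstacle.
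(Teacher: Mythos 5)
Your proof is correct. The paper does not actually supply a proof of this proposition; it is cited as standard linear algebra from the references (O'Neill, Flores--Herrera--Sánchez) and stated without argument. Your argument is the standard one: the chain of equivalences ``$W$ non-degenerate $\iff W\cap W^{\perp}=\{0\} \iff V=W\oplus W^{\perp}$ (orthogonal) $\iff W^{\perp}$ non-degenerate'' is the right preliminary, and from there additivity of the index under orthogonal direct sum (via concatenating orthonormal bases and invoking the invariance-of-index proposition) gives $1=\nu(V)=\nu(W)+\nu(W^{\perp})$, from which both implications drop out by assigning $\nu(W)=1$ or $\nu(W^{\perp})=0$. You correctly flag that establishing non-degeneracy of $W^{\perp}$ \emph{before} attempting the index count is the only step that requires care; everything else is bookkeeping. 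No gaps.
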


\subsection{The basics of semi-Riemannian Geometry}

A connected semi-Riemannian manifold $(M,\ga)$ is a differentiable $n$-dimensional manifold $M$ equipped with an everywhere non-degenerate, smooth, symmetric metric tensor $\ga$ of constant index $\nu \in \{1,\ldots,n\}$. Such a metric is called a semi-Riemannian metric. The pair $(M,\ga)$ will be called \emph{Riemannian} manifold if $\nu = 0$ and \emph{Lorentzian} if $\nu = 1$.

\subsubsection{Temporal orientability}

\begin{definition}
    A \emph{time orientation} of a Lorentzian vector space is a choice of one of the two time-like cones (or, equivalently, of one of the causal or light-like cones). The chosen cone will be called \emph{future}, and the other one, \emph{past}.
\end{definition}

Let $(M,\ga)$ be a Lorentzian manifold. Any tangent space $(T_xM,\ga_x),~x\in M$ is a Lorentzian vector space with two time-like cones, and one can choose one of them as a time orientation. The question of how this choice can be carried out continuously is the equivalent (in affirmative) answer to define a \emph{temporal orientation} on $M$. More precisely,
\begin{definition}
    A \emph{time orientation} in a Lorentzian manifold $(M,\ga)$ is a map which assigns, to each $x\in M$, a time-like cone $\tau_x \subset T_xM$ such that there exists an open neighborhood $U\ni x$ and a time-like vector field $X$ in $U$ such that $X_y \in \tau_y~\forall y\in U$. Any Lorentzian manifold assuming a time orientation is said \emph{time orientable}.
\end{definition}

It can be proved that
\begin{theorem}
    Let $M$ be a connected differentiable manifold. The following conditions are equivalent:
    \begin{enumerate}[(i)]
        \item $M$ admits a Lorentzian metric;
        \item $M$ admit a time-orientable Lorentzian metric;
        \item $M$ admits a vector field $X$ without zeros;
        \item Either $M$ is non-compact or its Euler characteristic is $0$.
    \end{enumerate}
\end{theorem}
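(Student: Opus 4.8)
The strategy is to prove the chain $(iii)\Rightarrow(ii)\Rightarrow(i)\Rightarrow(iv)\Rightarrow(iii)$, of which $(ii)\Rightarrow(i)$ is immediate because a time-orientable Lorentzian metric is in particular Lorentzian. I would fix once and for all an auxiliary Riemannian metric $g_0$ on $M$ (it exists by a partition-of-unity argument). For $(iii)\Rightarrow(ii)$, given a vector field $X$ without zeros, set $T := X/\sqrt{g_0(X,X)}$ and define
\[
\ga(Y,Z) := g_0(Y,Z) - 2\, g_0(Y,T)\, g_0(Z,T).
\]
One checks that $\ga(T,T) = -1$ while $\ga$ coincides with $g_0$ on the $g_0$-orthogonal complement of $T$, so $\ga$ is non-degenerate of constant index $1$, i.e.\ Lorentzian (for $\dim M \ge 2$). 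Since $T$ is a globally defined $\ga$-timelike field, assigning to $x$ the timelike cone of $(T_xM,\ga_x)$ that contains $T_x$ yields a time orientation, with $T$ serving as the required local timelike field everywhere.

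For $(i)\Rightarrow(iv)$, by connectedness it is enough to show that $M$ compact implies $\chi(M) = 0$. Given a Lorentzian metric $\ga$, write $\ga(Y,Z) = g_0(AY,Z)$ with $A$ a $g_0$-self-adjoint endomorphism of $TM$; since $\ga$ has index $1$, at each point $A$ has exactly one negative eigenvalue, which is therefore simple, and the associated eigenline $\ell_x$ depends smoothly on $x$. This produces the \emph{timelike line field}, a real line subbundle $\ell\subset TM$. If $w_1(\ell) = 0$ in $H^1(M;\mathbb{Z}/2\mathbb{Z})$, then $\ell$ is trivial, hence admits a nowhere-zero section, and the Poincar\'e--Hopf theorem gives $\chi(M) = 0$. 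If $w_1(\ell)\neq 0$, it determines a connected double cover $p\colon \widetilde M\to M$ on which $p^\ast\ell$ is orientable, hence trivial; then $\widetilde M$ carries a nowhere-zero vector field, so $0 = \chi(\widetilde M) = 2\chi(M)$, and again $\chi(M) = 0$.

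Finally, for $(iv)\Rightarrow(iii)$ I split into two cases. If $M$ is compact with $\chi(M) = 0$, this is Hopf's converse to the Poincar\'e--Hopf theorem: beginning with a vector field with isolated zeros, whose indices sum to $\chi(M) = 0$, one cancels the zeros in pairs of opposite index by modifications supported in small balls joined along arcs — available since $M$ is connected — until none remain. If $M$ is non-compact, one again starts from a field with isolated zeros and, using an exhaustion of $M$ by compact subsets together with a proper arc running to infinity, slides all zeros out of every compact set to obtain a nowhere-vanishing field. This last implication is the part I expect to be the main obstacle: the pairwise cancellation on closed manifolds and the sweeping-to-infinity in the open case are the only genuinely non-elementary inputs, so in the write-up I would either reproduce Hopf's argument or cite \cite{oneillbook, victoria2010introduction}; on the self-contained side, the subtle points are the smoothness of the timelike line field $\ell$ and the correct use of the orientation double cover in $(i)\Rightarrow(iv)$.
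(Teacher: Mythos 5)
The paper does not prove this theorem: it is preceded by the phrase ``It can be proved that'' and stated as a known background result, with the proofs deferred to the references \cite{victoria2010introduction, oneillbook}. So there is no ``paper's proof'' to compare against, and the relevant question is simply whether your argument is sound.

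Your cycle $(iii)\Rightarrow(ii)\Rightarrow(i)\Rightarrow(iv)\Rightarrow(iii)$ is the standard one, and the individual steps are correct. In $(iii)\Rightarrow(ii)$, the tensor $\ga = g_0 - 2\,g_0(\cdot,T)\,g_0(\cdot,T)$ indeed has index $1$: on $\mathrm{span}\{T\}$ it equals $-1$, on $T^{\perp_{g_0}}$ it equals $g_0$, and the two are $\ga$-orthogonal, so $T$ is a global $\ga$-timelike field giving a time orientation. In $(i)\Rightarrow(iv)$, the smoothness of the timelike line field $\ell$ follows because the negative eigenvalue of $A$ is simple, and your treatment of the orientability obstruction via $w_1(\ell)$ and the associated double cover (with $\chi(\widetilde M)=2\chi(M)$) closes the compact case properly; for non-compact $M$, condition $(iv)$ holds vacuously. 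In $(iv)\Rightarrow(iii)$, you correctly identify the two standard inputs — Hopf's cancellation of isolated zeros of opposite index on a closed manifold, and the ``push the zeros to infinity'' argument on an open manifold — as the genuinely non-elementary facts; citing \cite{oneillbook} for these is appropriate and is what the paper itself implicitly does. The only implicit hypothesis worth flagging explicitly is $\dim M \geq 2$, which you note in passing and which is built into the paper's definition of ``Lorentzian.'' The proof is correct and matches the standard route in the cited sources.
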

As we shall see, every example in this paper admits a Lorentzian metric. Moreover, the built metrics shall be time-oriented.

\subsubsection{Computational aspects}
Let $(M,\ga)$ be a Lorentzian manifold. Then, just as in the Riemannian case, it is possible to consider the Levi-Civita connection $\nabla$ of $\ga$. Moreover, for such a connection is well defined, the Riemann curvature tensor $R_{\ga}(X, Y)Z:= \nabla_X\nabla_YZ - \nabla_Y\nabla_XZ - \nabla_{[X, Y]}Z$ for every vector fields $X, Y, Z$ in $M$. It also enjoys the Riemann curvature tensor symmetry and anti-symmetry property of the Riemannian case.

From the Riemannian curvature tensor $R_{\ga}$ can be derived the Ricci tensor (that is well defined even for light-like vectors) via
\[\mathrm{Ric}(X,Y)(x) := \mathrm{tr}\left(\mathrm{R}(\cdot,X)Y\right),\]
and the scalar curvature $\mathrm{scal}_{\ga} := \mathrm{tr}\mathrm{Ric}$.

However, the sectional curvature of $\ga$ is only defined equally as in the Riemannian case for a plane $\sigma \in T_xM,~x\in M$ if, and only if, $\sigma$ is non-degenerated: It has a positive or negative area. If $\sigma = \mathrm{span}_{\bb R}\{X,Y\}$ then $|X|^2|Y|^2 -\ga(X,Y)^2 \neq 0$.

\subsection{General Notation}

	We denote by $R_{\ga}$ the Riemannian tensor of the metric $\ga$: 
	\[R_{\ga}(X,Y)Z=\nabla_X\nabla_YZ-\nabla_Y\nabla_XZ-\nabla_{[X,Y]}Z,\]
	where $\nabla$ stands for the Levi-Civita connection of $\ga$. We denote either by $K_{\ga}(X, Y)=\ga(R_{\ga}(X, Y)Y, X)$ or by $R_{\ga}(X, Y) = R_{\ga}(X, Y, Y, X),$ making it clear in the context, the unreduced sectional curvature of $\ga$. We adopt the standard decomposition notation $TM = \cal V\oplus \cal H$ for any foliated manifold $M$, being $\cal V$ the subbundle collecting tangent spaces to the leaves of the foliation $\cal F$ and $\cal H$ to some subbundle complementary choice in $TM$.
	
	The superscripts $\mathbf{v}$ or $\mathbf{h}$ denote the projection of the underlined quantities on such subbundles, $\cal V$ and $\cal H$, respectively. Whenever we say we have a \emph{Riemannian principal bundle}, we mean that the principal bundle is considered with a Riemannian submersion metric.

\section{A Cheeger approach to the Minkowski space and a first compact model}
\label{sec:Cheeger}
We first recall the aspects of Cheeger deformations needed to motivate our constructions well. Though the main formulae come from classical references, such as \cite{mutterz} and \cite{Muter}, we follow \cite{Cavenaghi2022} for the presentation. The reason for that is to make the constructions of what we call ``compact models of exotic spacetimes'' more natural once the concept of $\star$-diagrams is established.
	
	Take the product manifold $M\times G$ with the product metric $\ga+t^{-1}Q$, where $G$ acts on $M$ via isometries and $Q$ is a bi-invariant metric on $G$. We, therefore, see ourselves with two possibilities of free (and commuting) actions:
	
	\begin{equation}\label{eq:modelo}
\begin{xy}\xymatrix{& G\ar@{..}[d]^{\bullet} & \\ G\ar@{..}[r]^{\star} & M\times G\ar[d]^{\pi}\ar[r]^{\pi'} &M\\ &M&}\end{xy}
\end{equation}

In \eqref{eq:modelo} the action $\bullet$ stands to
\begin{equation}\label{eq:bullet}
r\bullet (m,g) := (m,rg),
\end{equation}
while the action $\star$ is nothing but the associated bundle action on $M\times G$, that is
\begin{equation}\label{eq:star}
r\star (m,g) := (rm,rg).
\end{equation}

Therefore, $\pi((m,g)) := m$ meanwhile $\pi'((m,g)) := g^{-1}m$. Since $\pi$ and $\pi'$ define principal bundles, the metric $\textsl g+t^{-1}Q$ induces via $\pi$ and $\pi'$, respectively, the metrics $\ga$ (the original one) and $\textsl g_t$, a $t$-\emph{Cheeger deformation} of $\textsl g$. 
	
	Throughout the paper, it shall be denoted by $\mathfrak{m}_p$, the $Q$-orthogonal complement of $\mathfrak{g}_p$, the Lie algebra of the isotropy subgroup at $G_p$. We recall that $\mathfrak{m}_p$ is isomorphic to the tangent space to the orbit $Gp$ via \textit{action fields}: For any $U\in \lie g$ the corresponding action field out of $U$ is defined by the rule
	\begin{equation*}
	U^*_p=\frac{d}{dt}\Big|_{t=0}\mathrm{Exp}\left(tU\right)p
	\end{equation*}
 where $\mathrm{Exp} : \lie g \rightarrow G$ is the Lie group exponential map.
	It is straightforward to check that the map $U\mapsto U^*_p$ is a linear morphism whose kernel is $\lie g_p$. In this manner, any vector tangent to $T_pGp$ is said to be \emph{vertical}. Hence, such a space is named the \emph{vertical space} at $p$, denoted by $\mathcal{V}_p$. For each $p\in M$, its orthogonal complement, denoted by $\cal H_p$, is named \emph{horizontal space}. A tangent vector $\overline X \in T_pM$ can be uniquely decomposed as $\overline X = X + U^{\ast}_p$, where $X$ is horizontal and $U\in \lie m_p$.
	
	There are symmetric positive definite (whenever $t\geq 0$) tensors relating original metric quantities with deformed ones:
	\begin{enumerate}
		\item The \emph{orbit tensor} at $p$ is the linear map $P : \mathfrak{m}_p \to \mathfrak{m}_p$ defined by
		\[\textsl g(U^{\ast},V^{\ast}) = Q(PU,V),\quad\forall U^{\ast}, V^{\ast} \in \mathcal{V}_p\]
		\item For each $t > 0$ we define $P_t:\lie m_p\to \lie m_p$ as 
		\[\textsl g_t(U^{\ast},V^{\ast}) = Q(P_tU,V), \quad\forall U^{\ast}, V^{\ast} \in \mathcal{V}_p\]
		\item The \emph{metric tensor of $\textsl g_t$}, $C_t:T_pM\to T_pM$ is defined as
		\[\textsl g_t(\overline{X},\overline{Y}) = \textsl g(C_t\overline{X},\overline{Y}), \quad\forall \overline{X}, \overline{Y} \in T_pM\]
	\end{enumerate}
	The former tensors are related as
	\begin{proposition}[Proposition 1.1 in \cite{mutterz}]\label{propauxiliar}
		\begin{enumerate}
			\item \label{eq:pt} $P_t = (P^{-1} + t1)^{-1} = P(1 + tP)^{-1}$,
			\item If $\overline{X} = X + U^{\ast}$ then $C_t(\overline{X}) = X + ((1 + tP)^{-1}U)^{\ast}$.
		\end{enumerate}
	\end{proposition}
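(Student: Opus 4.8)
The plan is to exploit that $\ga_t$ is, by construction, the metric making $\pi'\co(M\times G,\ga+t^{-1}Q)\to(M,\ga_t)$ a Riemannian submersion, compute horizontal lifts explicitly, and then read off $P_t$ and $C_t$ from a closed formula for $\ga_t$. Since the whole picture is $G$-equivariant, it is enough to compute over one representative of each $\star$-fibre, and the convenient representative of the fibre over $m\in M$ is $(m,e)$, $e\in G$ the identity. First I would record the vertical space of the $\star$-action there: differentiating $s\mapsto\exp(sU)\star(m,e)=(\exp(sU)m,\exp(sU))$ gives $U^\ast_{(m,e)}=(U^\ast_m,U)\in T_mM\oplus\lie g$, so $\cal V_{(m,e)}=\{(U^\ast_m,U):U\in\lie g\}$. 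Writing a tangent vector to $M$ as $\overline X=X+W^\ast$ with $X$ horizontal and $W\in\lie m_m$, a pair $(\overline X,V)\in T_mM\oplus\lie g$ is $(\ga+t^{-1}Q)$-orthogonal to $\cal V_{(m,e)}$ precisely when $Q(PW,U)+t^{-1}Q(V,U)=0$ for all $U\in\lie g$, i.e. when $V=-tPW$; here one uses that the orbit tensor $P$ is $Q$-self-adjoint, which is immediate from symmetry of $\ga|_{\cal V}$.

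Next I would compute the differential of $\pi'$. From $\pi'(m,g)=g^{-1}m$ and the fact that inversion on $G$ has differential $-\id$ at $e$, one gets $d\pi'_{(m,e)}(\overline X,V)=\overline X-V^\ast_m$. Imposing that a horizontal vector $(X+W'^\ast,-tPW')$ project onto a prescribed $\overline X=X+W^\ast$ forces $X$ unchanged and $(1+tP)W'=W$, so the horizontal lift of $\overline X$ is $\big(X+((1+tP)^{-1}W)^\ast,\,-tP(1+tP)^{-1}W\big)$. (Equivalently one can bypass lifts and minimize $V\mapsto(\ga+t^{-1}Q)\big((\overline X+V^\ast_m,V),(\overline X+V^\ast_m,V)\big)$ over $V\in\lie m_m$ — this affine family is exactly the fibre of $d\pi'_{(m,e)}$ over $\overline X$ — whose unique critical point is $V=-tP(1+tP)^{-1}W$.) Evaluating $\ga+t^{-1}Q$ on this lift, and using $Q$-self-adjointness of $P$ together with the identity $Q(PW',W')+tQ(PW',PW')=Q(P(1+tP)W',W')=Q(PW,W')$, one arrives at
\[
\ga_t(\overline X,\overline X)=|X|^2+Q\big(P(1+tP)^{-1}W,\,W\big).
\]

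Both assertions then drop out of this closed form. For (1), set $X=0$ and $W=U=V$: since $\ga_t(U^\ast,V^\ast)=Q(P_tU,V)$ and both sides are symmetric bilinear forms on $\lie m_m$, polarization yields $P_t=P(1+tP)^{-1}$, and $P(1+tP)^{-1}=\big(P^{-1}(1+tP)\big)^{-1}=(P^{-1}+t\,\id)^{-1}$, all factors commuting as polynomials in $P$. For (2), for every $\overline Y=Y+V^\ast$ one computes
\[
\ga\big(X+((1+tP)^{-1}W)^\ast,\;Y+V^\ast\big)=\ga(X,Y)+Q\big(P(1+tP)^{-1}W,V\big)=\ga_t(\overline X,\overline Y),
\]
the last equality being the polarization of the displayed formula combined with $Q$-self-adjointness of $P$ and of $(1+tP)^{-1}$; by non-degeneracy of $\ga$ this identifies $C_t(\overline X)=X+((1+tP)^{-1}W)^\ast$, which is (2).

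The computation is essentially bookkeeping; the two places that deserve attention are the sign coming from inversion in $d\pi'$ — it is exactly what produces the factor $(1+tP)$ rather than $(1-tP)$ — and the systematic use of $Q$-self-adjointness of $P$ to move $(1+tP)^{-1}$ across the $Q$-inner product. Beyond keeping those straight I do not foresee a genuine obstacle.
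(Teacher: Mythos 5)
The paper does not prove this proposition; it is quoted verbatim as Proposition 1.1 of \cite{mutterz}, so there is no internal proof to compare against. Your reconstruction is correct and is exactly the standard submersion computation behind that citation: identify $\ga_t$ as the Riemannian submersion metric for $\pi'\colon (M\times G,\ga+t^{-1}Q)\to M$, compute the $\star$-vertical space $\{(U^*_m,U):U\in\lie g\}$ at $(m,e)$, observe that $d\pi'_{(m,e)}(\overline X,V)=\overline X-V^*_m$ (the inversion producing the crucial sign), solve for the horizontal lift, and read $P_t$ and $C_t$ off the resulting closed form $\ga_t(\overline X,\overline X)=|X|^2+Q\bigl(P(1+tP)^{-1}U,U\bigr)$. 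The one place you are slightly informal is the orthogonality condition $Q(PW,U)+t^{-1}Q(V,U)=0$ for all $U\in\lie g$: strictly, decomposing $U=U_1+U_2$ with $U_1\in\lie m_m$, $U_2\in\lie g_m$, the $U_2$-part forces $V\in\lie m_m$ and the $U_1$-part forces $V=-tPW$, after which everything goes through as you wrote. The alternative minimization argument you sketch is valid for $t>0$ (where positive-definiteness makes ``horizontal lift'' equal ``minimizer''); the direct lift computation has the advantage of working formally for any $t$ with $1+tP$ invertible, which matters later in the paper when $t=-r^2$.
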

	
 In the field of non-negative/positive curvatures, the metric tensor $C_t^{-1}$ can be used to define a reparametrization of $2$-planes, not only making it easier to compute sectional curvature of a Cheeger deformed metric but allowing us to observe that Cheeger deformations (for non-negative $t$) do not create `new' planes with zero sectional curvature, meaning that
	\begin{theorem}\label{thm:curvaturasec}
		Let $\overline{X} = X + U^{\ast},~ \overline{Y} = Y + V^{\ast}$ be tangent vectors. Then \[\kappa_t(\overline X,\overline Y) := R_{\textsl g_t}(C_t^{-1}\overline{X},C_t^{-1}\overline{Y},C_t^{-1}\overline{Y},C_t^{-1}\overline{X})\] satisfies
		\begin{equation}\label{eq:curvaturaseccional}
		\kappa_t(\overline X,\overline{Y}) = \kappa_0(\overline{X},\overline{Y}) +\frac{t^3}{4}\|[PU,PV]\|_Q^2 + z_t(\overline{X},\overline{Y}),
		\end{equation}
		where $z_t$ is non-negative if $t\geq 0$. Moreover, for points in regular orbits, we have
   \[ z_t(\overline X,\overline Y) = 3t\left\|(1+tP)^{-1/2}P\nabla^{\mathbf{v}}_{\overline X}\overline Y - (1+tP)^{-1/2}t\h[PU,PV]\right\|_Q^2.\]
	\end{theorem}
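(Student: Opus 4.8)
The plan is to realize the Cheeger deformation $(M,\ga_t)$ as the base of a Riemannian submersion and then read off the claimed identity from O'Neill's curvature formula. For $t>0$ the action $\star$ of \eqref{eq:star} is free and isometric on $(M\times G,\ga+t^{-1}Q)$, and by the very definition of $\ga_t$ the map $\pi'$ of \eqref{eq:modelo} is a Riemannian submersion $(M\times G,\ga+t^{-1}Q)\to(M,\ga_t)$. O'Neill's formula for the unreduced sectional curvature then reads, for horizontal lifts $\widehat X,\widehat Y$ (for the submersion $\pi'$) of the tangent vectors $C_t^{-1}\overline X,C_t^{-1}\overline Y\in T_pM$,
\[R_{\ga_t}(C_t^{-1}\overline X,C_t^{-1}\overline Y,C_t^{-1}\overline Y,C_t^{-1}\overline X)=R_{\ga+t^{-1}Q}(\widehat X,\widehat Y,\widehat Y,\widehat X)+\tfrac34\bigl\|[\widehat X,\widehat Y]^{\mathbf{v}}\bigr\|_{\ga+t^{-1}Q}^{2},\]
where $[\,\cdot\,]^{\mathbf{v}}$ is the $\star$-vertical part. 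So the proof splits into three tasks: (i) compute the horizontal lifts, (ii) evaluate the product curvature appearing on the right, and (iii) evaluate the $\star$-vertical bracket.

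For (i): by $G$-equivariance it is enough to work at a point $(p,e)$. A short computation gives $d\pi'_{(p,e)}(A,B)=A-B^{\ast}_{p}$ for $(A,B)\in T_pM\oplus\lie g$, where $B^{\ast}_{p}$ is the action field of the $G$-action on $M$, while the vertical space of $\star$ at $(p,e)$ is $\{(W^{\ast}_{p},W):W\in\lie g\}$. Imposing $d\pi'(A,B)=C_t^{-1}\overline X$ together with $(\ga+t^{-1}Q)$-orthogonality to every $(W^{\ast}_{p},W)$, and turning the $\ga$-inner products of action fields into $Q$-inner products through the orbit tensor $P$, produces a linear system whose solution --- once the factor $(1+tP)^{-1}$ it carries is absorbed into $C_t^{-1}$ via Proposition~\ref{propauxiliar} --- is the horizontal lift
\[\widehat X=\bigl(\overline X,-tPU\bigr)=\bigl(X+U^{\ast},-tPU\bigr),\qquad\widehat Y=\bigl(Y+V^{\ast},-tPV\bigr).\]

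For (ii): since $\ga+t^{-1}Q$ is a product metric and the $M$- and $G$-components of $\widehat X,\widehat Y$ are independent, its $(4,0)$-curvature is a direct sum,
\[R_{\ga+t^{-1}Q}(\widehat X,\widehat Y,\widehat Y,\widehat X)=R_{\ga}(\overline X,\overline Y,\overline Y,\overline X)+t^{-1}R_Q(-tPU,-tPV,-tPV,-tPU).\]
Since $C_0=\id$ the first summand equals $\kappa_0(\overline X,\overline Y)$; and since $R_Q(A,B,B,A)=\tfrac14\|[A,B]\|_Q^{2}$ for the bi-invariant metric $Q$, the $4$-linearity of the curvature tensor turns the second summand into $t^{-1}\cdot t^{4}\cdot\tfrac14\|[PU,PV]\|_Q^{2}=\tfrac{t^{3}}{4}\|[PU,PV]\|_Q^{2}$. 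Substituting this into O'Neill's formula already yields \eqref{eq:curvaturaseccional} with
\[z_t(\overline X,\overline Y)=\tfrac34\bigl\|[\widehat X,\widehat Y]^{\mathbf{v}}\bigr\|_{\ga+t^{-1}Q}^{2}\ge 0\qquad (t\ge 0),\]
the nonnegativity being immediate, while $z_0=0$ because $C_0=\id$ and $\ga_0=\ga$.

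Finally (iii), the explicit formula on regular orbits, which is the part that takes real work. Because $A_{\widehat X}\widehat Y=\h[\widehat X,\widehat Y]^{\mathbf{v}}$ is tensorial, one may first extend $\overline X,\overline Y$ to $G$-invariant vector fields on $M$ and lift them before taking the bracket; on the regular stratum the isotropy type is locally constant, so $p\mapsto\lie m_p$, the orbit tensor $P$ and the vertical projections all vary smoothly, and this is precisely why this step is confined to the principal stratum. The $M$-component of $[\widehat X,\widehat Y]$ is the bracket $[\overline X,\overline Y]$ on $M$, whose vertical contribution reorganizes --- using the antisymmetry inherent in $A$ --- into $\nabla^{\mathbf{v}}_{\overline X}\overline Y$; differentiating the $\lie g$-valued components $-tPU$ and $-tPV$ along $\overline Y$ and $\overline X$ respectively contributes the Lie-bracket term $t\h[PU,PV]$. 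Projecting the resulting vector back onto the $\star$-vertical space, rewriting its $(\ga+t^{-1}Q)$-norm via the identity $\|(W^{\ast}_{p},W)\|_{\ga+t^{-1}Q}^{2}=t^{-1}\|(1+tP)^{1/2}W\|_Q^{2}$, and multiplying by $\tfrac34$, one lands on
\[z_t(\overline X,\overline Y)=3t\left\|(1+tP)^{-1/2}P\nabla^{\mathbf{v}}_{\overline X}\overline Y-(1+tP)^{-1/2}t\h[PU,PV]\right\|_Q^{2}.\]
I expect the genuine obstacle to be exactly this last bracket computation: one must carefully track how $[\widehat X,\widehat Y]$ distributes between the ``second fundamental form'' pieces --- which must reassemble into $P\nabla^{\mathbf{v}}_{\overline X}\overline Y$ --- and the Lie-algebra pieces --- which give $t\h[PU,PV]$ --- and check that the leftover symmetric terms cancel and that the right $\lie m_p$-projections survive. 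It is here that the bi-invariance of $Q$ and the regularity of the orbit genuinely enter.
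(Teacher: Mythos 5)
The paper itself offers no proof of this theorem; after the statement it simply writes ``We refer to either [Proposition 1.3 in~\cite{mutterz}] or [Lemma 3.5 in~\cite{cavenaghiesilva}] for the details.'' Your plan --- realize $\ga_t$ as the base of the submersion $\pi'\co(M\times G,\ga+t^{-1}Q)\to(M,\ga_t)$ and read the identity off O'Neill's formula --- is exactly the route those references take, so in that sense you are on the same path as the paper. Steps (i) and (ii) are carried out correctly: the horizontal lift of $C_t^{-1}\overline X$ at $(p,e)$ is indeed $(\overline X,-tPU)$ (this follows from $d\pi'(A,B)=A-B^*_p$ together with the orthogonality condition $PU_A+t^{-1}B=0$ and Proposition~\ref{propauxiliar}), the product curvature of the lifts splits as $R_\ga(\overline X,\overline Y,\overline Y,\overline X)+\tfrac{t^3}{4}\|[PU,PV]\|_Q^2$, and the nonnegativity of $z_t=\tfrac34\|[\widehat X,\widehat Y]^{\mathbf v}\|^2$ for $t\ge 0$ is immediate.

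Step (iii) is where the proposal stops being a proof. You describe what the bracket computation \emph{should} produce and then assert one ``lands on'' the claimed formula, while simultaneously admitting ``I expect the genuine obstacle to be exactly this last bracket computation'' and flagging that you have not checked the cancellation of symmetric terms or that the correct $\lie m_p$-projection survives. Those are precisely the nontrivial points: the $\lie g$-component of $[\widehat X,\widehat Y]$ picks up a term from the $p$-dependence of $-tP_pU_p$ along $\overline Y$ (and vice versa) in addition to the Lie-algebra bracket $t^2[PU,PV]$, the vertical part of $[\overline X,\overline Y]$ is $\nabla^{\mathbf v}_{\overline X}\overline Y-\nabla^{\mathbf v}_{\overline Y}\overline X$ rather than $\nabla^{\mathbf v}_{\overline X}\overline Y$ outright, and the $\ga+t^{-1}Q$-orthogonal projection onto $\{(W^*,W)\}$ mixes the two slots through $(1+tP)^{-1}$. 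Until these pieces are actually combined and shown to reassemble into $(1+tP)^{-1}\bigl(2tP\nabla^{\mathbf v}_{\overline X}\overline Y-t^2[PU,PV]\bigr)$ (up to sign), the ``Moreover'' clause of the theorem remains unproved. So: right approach, first two-thirds correct, final formula for $z_t$ asserted but not derived.
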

	We refer to either \cite[Proposition 1.3]{mutterz} or \cite[Lemma 3.5]{cavenaghiesilva} for the details on the proof and more references.

 We will use Cheeger deformations with group actions by $\bb R$ or $\mathrm{S}^1$ to produce Lorentzian metrics on some manifolds. As a first step, we show that such a procedure allows us to recover the classic \emph{Minkowski spacetime}. Later, we proceed to study what we see as an ``analogous compact model'' to the Minkowski spacetime since the same tool and the practical computations using quaternionic algebra allow us to recover the classical \emph{de Sitter spacetime} as some universal covering.

 Before proceeding, however, we make some remarks.
 \begin{remark}\label{rem:importante}
\begin{enumerate}[(a)]
    \item if the $G$-action on $M$ is free and effective and the orbits of $G$ are totally geodesic for $\ga$, then the orbit tensor $P$ is constant along the whole manifold once seen as a section of the bundle (over $M$) of endomorphisms of $\lie g$. In this manner, it is convenient sometimes to take $P = 1$ (the identity operator) under the assumption of totally geodesic leaves.
    \item for effective free $G$-actions, we can always assume that the orbits of $G$ are totally geodesic for some Riemannian metric $\ga$, which preserves the initial horizontal distribution. This is due to Searle--Solorzano--Wilhelm, \cite{solorzano}. We often use this implicitly, combined with the previous item.
    \item the metric tensor $C_t = (1+tP)^{-1}$ contains the needed control to produce (semi)-Riemannian metrics via Cheeger deformations. Observe that, as operators, if one takes $t < 0$, then $\ga_{t}$ has mixed signature if, and only if, $1<-tP$. For most of this paper, $P=1$, and we choose $t = -r^{2}$ assuming that $r^2 > 1$, aiming to produce Lorentzian metrics.
\end{enumerate}
 \end{remark}

\subsection{The standard Minkowski spacetime}
\label{sec:minkowski}
The description of the so-called \emph{Minkowski spacetime} $\bb{R}^{1,3}$ is obtained from the topological space $\bb R^4$, providing it a semi-Riemannian metric.

Let $\bb H$ be the quaternionic space with $\mathrm{dim}_{\bb R}\bb H = 4$. In what follows, we shall use the inherited algebraic structure of $\bb H$ to perform all the $\bb R^4$ computations. In this manner, throughout this section, we identify
\[\bb R^4 \cong \bb H\]
\[\vec{q} = (x^0,x^1,x^2,x^3) \leftrightarrow x^0 + x^1\mathbf{i} + x^2\mathbf{j} + x^3\mathbf{k} = q\]

Let 
\[ t\cdot \vec{q} := (t+x^0) + x^1\mathbf{i} + x^2\mathbf{j} + x^3\mathbf{k}.\]
be an effective free action of $\bb R \cong \mathrm{Re}(\bb H)$ on $\bb H$ and consider the product $\bb H \times \bb R$ with the following \emph{twisted} $\bb R$-action:
\begin{equation}\label{eq:twisted}
t\star (\vec{q},t') := (\vec{q}\cdot t,t+t').
\end{equation}
Observe that the action \eqref{eq:twisted} works as the analogous $\star$-action defined in equation \eqref{eq:star}. In this manner, to introduce a family of Lorentzian metrics on $\bb H$, we take $r > 0$ and consider the semi-Riemannian metric
\begin{equation}
    \widetilde{\ga}_{-r^{2}} := \langle\cdot,\cdot\rangle_{\bb H} -r^{-2}d\theta^2
\end{equation}
in $\bb H\times \mathbb{R}$, where $\langle\cdot,\cdot\rangle_{\bb H}$ is the quaternionic inner product defined by
\[\langle \vec{q},\vec{r}\rangle_{\bb H} := \mathrm{Re}(\vec{q}\overline{\vec{r}}).\] Moreover, since we can see $\mathbb{R
}$ inside $\bb H$ as $\bb R \cong \mathrm{Re}~\bb H$ then $d\theta^2$ is nothing but standard multiplication of real numbers.

It is straightforward to check that the vertical (one-dimensional) space at $(q,t')$ coincides with
\begin{equation}\label{eq:vertmin}
\cal V_{(\vec{q},t')} := \mathrm{span}_{\bb R}\{(1 + 0\mathbf i + 0\mathbf j + 0\mathbf k,1)\}.
\end{equation}

We proceed to compute the horizontal space that is $\widetilde \ga_{-r^{2}}$ complementary to the vertical space \eqref{eq:vertmin}. With this aim, we must take $\vec{X} = (X,t') \in T_{(q,t')}\left(\bb H\oplus \bb R\right)\cong \bb H\oplus \bb R$ such that
\begin{equation*}
\langle X,1\rangle -r^{-2}t' = 0.
\end{equation*}

Since $\langle X,1\rangle = \mathrm{Re}(X\overline 1) = \mathrm{Re}(X)$ one gets that
$\mathrm{Re}(X) = r^{-2}t'$ so
\begin{equation}\label{eq:hormin}
\cal H_{(q,t')} = \left\{\vec{X} = (X,t') \in \bb H\times \{t'\} : \vec{X} = (r^{-2}t' + X^1\mathbf i + X^2\mathbf j + X^3\mathbf k,t')\right\}
\end{equation}
Furthermore, note that
\begin{align*}
|\vec{X}|^2_{-r^2} &= r^{-4}(t')^2 + |\mathrm{Im}~X|^2 -r^{-2}(t')^2\\
&= -(t')^2r^{-4}(r^{2}-1) + |\mathrm{Im}~X|^2,
\end{align*}
 so  for any $r > 1$ the metric $\ga_{-r^{2}}$ on $\bb H$ with
 \[\bb H \cong \mathbb{R}\oplus\mathbb{R}^3\cong \mathrm{span}_{\bb R}\left\{\partial_t\right\}\oplus \mathrm{Im}~\bb H\]
 \[\vec{q} = x^0 + x^1\mathbf i + x^2\mathbf j + x^3\mathbf k \leftrightarrow (\mathrm{Re}~\vec{q},\mathrm{Im}~\vec{q})\]
\begin{equation}
\ga_{-r^2}(t'\partial_t + X,t''\partial_t + Y) := -t't''r^{-4}(r^{2}-1) + \langle X,Y\rangle_{\bb R^3} 
\end{equation}
is a semi-Riemannian metric for which the vector field $\partial_t$, which is nothing but the identification $\frac{\partial}{\partial x^0} \equiv \partial_t$, is globally defined and time-like.

If one denotes $c^2 := r^{-4}(r^{2}-1)$ to make analogy with physics we have the Minkowski space $(\bb R^4,\ga_{-c^2})$ with the following description:
\begin{equation}
    \ga_{-c^2}(t'\partial_t + X,t''\partial_t + Y) := -t't''c^2 + \langle X,Y\rangle_{\bb R^3}.
\end{equation}

\subsection{$\mathrm{SU}(2)$ as a spacetime and obstructions}

This section sees $\mathrm{SU}(2)$ as the subset of $\bb H$ consisting of unit elements. It conveys to denote each element of $\mathrm{SU}(2)$ by $q\in \bb H$ such that $|q|^2 = 1$.

Although $\mathrm{S}^1$ is naturally included in $\mathrm{SU}(2)$, there are plenty of choices to explicitly write one of such to act on $\mathrm{SU}(2)$. We shall proceed by picking $\mathrm{S}^1 = \{e^{\mathbf{i}\theta} : \theta \in \bb R\}.$  To construct a $\mathrm{S}^1$-action on $\mathrm{SU}(2)$, instead of right or left multiplication, we look to the restriction of a $\mathrm{SU}(2)$-action to a $\mathrm{S}^1$-action on $\mathrm{S}^7$: such a construction comes out the $\star$-diagram below, appearing first in the work of Gromoll and Meyer \cite{gromoll1974exotic}, as well later in \cite{duran2001pointed,speranca2016pulling, SperancaCavenaghiPublished, Cavenaghi2022}.

		Consider the compact Lie group
		\begin{equation}
			\mathrm{Sp}(2) = \left\{\begin{pmatrix} a & c \\b & d\end{pmatrix}\in \mathrm{S}^7\times \mathrm{S}^7~ \Big| ~a\overline{b} + c\overline{d} = 0\right\},\label{eq:Sp2}
		\end{equation} 
		where $a,b,c,d\in \bb H$ are quaternions with their standard conjugation, multiplication, and norm.
		The projection $\pi:\mathrm{Sp}(2)\to \mathrm{S}^7$ of an element to its first row defines a principal $\mathrm{SU}(2)$-bundle with principal action:
		\begin{equation}\label{eq:GMprincipalaction}
			\begin{pmatrix} 
				a & c \\
				b & d 
			\end{pmatrix}\overline q = \begin{pmatrix}
				a & c\overline{q}\\
				b & d\overline{q}
			\end{pmatrix}.
		\end{equation}
		As in \cite{gromoll1974exotic}, we consider the $\star$-action
		\begin{equation}\label{eq:GMstaraction}
			q \begin{pmatrix} 
				a & c \\
				b & d 
			\end{pmatrix} = \begin{pmatrix} 
				qa\overline{q} & qc \\
				qb\overline{q} & qd 
			\end{pmatrix},
		\end{equation}
		whose quotient is an exotic 7-sphere, via the quotient map  
  \[\pi'\left( \begin{pmatrix} 
				a & c \\
				b & d 
			\end{pmatrix}\right) := \left(2\overline cd,|c|^2-|d|^2\right),\]
  placing everything in the diagram
		\begin{equation}\label{eq:CDGM}
			\begin{xy}\xymatrix{& \mathrm{SU}(2)\ar@{..}[d]^{\bullet} & \\ \mathrm{SU}(2)\ar@{..}[r]^{\star} & \mathrm{Sp}(2)\ar[d]^{\pi}\ar[r]^{\pi'} &\Sigma^7_{GM}\\ &\mathrm{S}^7&}\end{xy}
		\end{equation}
  (In \cite{Cavenaghi2022}, such construction is exploited to produce stair of bundles with curvature properties ).

  We consider \[\mathrm{SU}(2) = \left\{\begin{pmatrix}
  a\\
  0
  \end{pmatrix} \in \mathrm{S}^7\times \{0\}\subset \bb H\oplus \bb H\right\}\]
and define the following action of $\mathrm{S}^1$ on $\mathrm{SU}(2)$:
		\begin{equation}\label{eq:triviasu2}
q\cdot a := qa\overline q.
  \end{equation}

It should then be clear that
\begin{equation*}
\cal V_a \cong \mathrm{span}_{\bb R}\left\{\mathbf{i}a - a\mathbf{i} : a \in  \mathrm{SU}(2)\right\}.
\end{equation*}
Hence, if we now consider the $\mathrm{S}^1$-twisted action in $\mathrm{SU}(2)\times \mathrm{S}^1$:
\begin{equation}\label{eq:twistsu2}
q\ast (a,q') := (qa\overline q,qq'), ~q'\in \mathrm{S}^1, a \in \mathrm{SU}(2)
\end{equation}
one gets the following description of the vertical space for the action given by equation \eqref{eq:twistsu2}:
\begin{equation*}
\cal V_{(a,q')} \cong \mathrm{span}_{\bb R}\left\{(\mathbf{i}a - a\mathbf{i}, \mathbf{i}q')\right\}.
\end{equation*}
Observe that the action \eqref{eq:triviasu2} is nothing but the $\star$-action presented in equation \eqref{eq:star} used to perform a Cheeger deformation. However, to avoid confusion with equation \eqref{eq:GMstaraction}, we wrote it as $\ast$ instead of $\star$.

We regard $\mathrm{SU}(2)\times \mathrm{S}^1$ with the following $\mathrm{S}^1$-invariant semi-Riemannian metric
\[\widetilde{\ga}_{-r^{2}}(\cdot,\cdot) := \left\langle\cdot,\cdot\right\rangle_{\mathrm{SU}(2)} -r{^{-2}}d\theta^2,\]
where $d\theta^2$ stands for the standard metric on $\mathrm{S}^1$: observe that to our description of $\mathrm{S}^1$ we have that $d\theta^2(\mathbf{i}t,\mathbf{i}t')$ coincides with $\mathrm{Re}(\mathbf{i}t\overline{\mathbf{i}t'}) = tt'$, for $t,t'\in \bb R$. The metric $\langle\cdot,\cdot\rangle_{\mathrm{SU}(2)}$ is the isometric immersion metric obtained from $\langle\cdot,\cdot\rangle_{\bb H}$.

We then proceed to compute the horizontal space, complementary to $\cal V_{(a,q')}$ for $\widetilde{\ga}_{-r^2}$. To do so, let us search for a vector $(X\mathbf{i}a,-\mathbf{i}q't)$ with $X\mathbf{i}a \in T_a\mathrm{SU}(2)$ which satisfies
\[\widetilde{\ga}_{-r^{2}}((X\mathbf{i}a,-\mathbf{i}q't),(\mathbf{i}a-a\mathbf{i},\mathbf{i}q')) = 0\]
for some $t\in \bb R, ~X\mathbf{i}a\in \bb H$. First observe that $X\mathbf{i}a \in T_a\mathrm{SU}(2)$ if, and only if, $\mathrm{Re}(X\mathbf{i}a\bar a) = 0$. That is, $\mathrm{Re}(X\mathbf{i}) = 0$.

We compute:

\begin{align*}
0 &= \mathrm{Re}((\mathbf{i}a- a\mathbf{i})\overline{X\mathbf{i}a})  -r^{-2}\mathrm{Re}(\mathbf{i}q'\overline{(-\mathbf{i}q't)}) \\
&=\mathrm{Re}(\mathbf{i}a\overline a(-\mathbf{i})\overline X -a\mathbf{i}\overline a(-\mathbf{i})\overline X) - r^{-2}\mathrm{Re}(\mathbf{i}q'\overline{q'}\mathbf{i}t)\\
&= \mathrm{Re}(\mathbf{i}a\overline a(-\mathbf{i})\overline X -a\mathbf{i}\overline a(-\mathbf{i})\overline X) + r^{-2}t\\
&= \mathrm{Re}(\overline X+a\mathbf{i}\overline a\mathbf{i}\overline X) + r^{-2}t\\
&= \mathrm{Re}(\overline X-a\mathbf{i}\overline{\mathbf{i}a}\overline X) + r^{-2}t.
\end{align*}
Therefore,
\begin{equation*}
\cal H_{(a,q')} = \mathrm{span}_{\bb R}\left\{(X\mathbf{i}a,-\mathbf{i}q't) : \mathrm{Re}((1-a\mathbf{i}\overline{\mathbf{i}a})\overline X) = -r^{-2}t,~ \mathrm{Re}~X\mathbf{i} = 0\right\}.
\end{equation*}

Hence, to promote a semi-Riemannian metric on $\mathrm{SU}(2)$ via this procedure, one observes that since $|(X\mathbf{i}a,-\mathbf{i}q't)|_{-r^2}^2 = |X|^2_{\bb H} -r^{2}t^2$, via the identification $\mathbf{i} \leftrightarrow \partial_{\theta}$, we have:

\begin{equation}\label{eq:semiriemanniansu(2)}
T_a\mathrm{SU}(2) \cong\mathrm{span}_{\bb R}\left\{t\partial_{\theta}, X\mathbf{i}a : \mathrm{Re}((1-a\mathbf{i}\overline{\mathbf{i}a})\overline X) = -r^{-2}t,~\mathrm{Re}(X\mathbf{i}) = 0\right\}
\end{equation}
with the following semi-Riemannian metric
\begin{equation}\label{eq:semiriemanniansu(2)metric}
\ga_{-r^2}(t\partial_{\theta} + X\mathbf{i}a,t'\partial_{\theta} + Y\mathbf{i}a) = \langle X,Y\rangle_{\bb H} - tt'r^{-2}.
\end{equation}

However, observe that the defined $\mathrm{S}^1$-action on $\mathrm{SU}(2)$ has some fixed points. Therefore, for some points $a\in\mathrm{SU}(2)$, the time-like vector $\partial_{\theta}$ vanishes identically, and the metric degenerates to a Riemannian metric. Therefore, time does not exist for some points in this compact model, and the metric presents a kind of ``singularity''. The following section discusses the nature of this singularity, aiming to provide some physical insight into it.

\subsection{The de Sitter spacetime and the universal covering in a $\star$-diagram}

The \emph{de Sitter spacetime} corresponds to a solution to the \emph{Einstein field equations}
\[\mathrm{Ric}(\ga) -\h\mathrm{scal}_{\ga}\ga + \Lambda \ga = 0\]
that is topologically equivalent (in the homeomorphism sense) to $\bb R\times \mathrm{SU}(2)$, being a mathematical model of the universe that encompasses the observed accelerating expansion. We refer to \cite{Coxeter1943AGB} for a different geometric approach to the de Sitter spacetime.

Observe that the $(-r^2)$-Cheeger deformation obtained in the previous section allowed us to construct a semi-Riemannian submersion
\begin{equation}
    \bar \pi : \left(\mathrm{SU}(2)\times \mathrm{S}^1,\widetilde{\ga}_{-r^{2}}\right) \rightarrow \left(\mathrm{SU}(2),\ga_{-r^{2}}\right)
\end{equation}

On the other hand, the universal cover of $\mathrm{SU}(2)\times \mathrm{S}^1$ is homeomorphic to $\mathrm{SU}(2)\times \mathbb{R}$. In this manner, consider on $\mathrm{SU}(2)\times \mathbb{R}$ the covering semi-Riemannian metric $h_{-r^{2}}$ induced via the covering map
\[\mathrm{p} : \left(\mathrm{SU}(2)\times \bb{R},h_{-r^{2}}\right) \rightarrow  \left(\mathrm{SU}(2)\times \mathrm{S}^1,\widetilde{\ga}_{-r^{2}}\right).\]
We can build the commutative diagram of semi-Riemannian submersions
\begin{center}
    \begin{equation}\label{diagram:blowingupsu2}
        \xymatrix{
    \left(\mathrm{SU}(2)\times \mathrm{S}^1,\widetilde{\ga}_{-r^{2}}\right)   \ar[r]^{\bar \pi} &    \left(\mathrm{SU}(2),\ga_{-r^{2}}\right)  \\
    \left(\mathrm{SU}(2)\times \bb{R},h_{-r^{2}}\right) \ar[u]^{\mathrm{p}}  \ar[ru]_{\bar \pi \circ \mathrm{p}} }
    \end{equation}
\end{center}
which recovers the de Sitter spacetime $\left(\mathrm{SU}(2)\times \bb{R},h_{-r^{2}}\right)$, i.e., we have presented a (topologically) \emph{global} construction to the de Sitter spacetime (that is, \emph{coordinate free}) naturally associated with a Cheeger deformation.

Diagram \eqref{diagram:blowingupsu2} allow us to see, in some sense, $\left(\mathrm{SU}(2),\ga_{-r^{2}}\right)$ as a \emph{restrict world perspective of the de Sitter spacetime}. Said in other worlds, the submersion $\bar\pi \circ \mathrm{p}$ projects the vision of any observer in $\left(\mathrm{SU}(2)\times \mathbb{R},\mathrm{h}_{-r^{2}}\right)$ to the low-dimension compact model $\left(\mathrm{SU}(2),\mathrm{g}_{-r^{2}}\right)$. Moreover, observe that the $\mathrm{S}^1$-action on $\mathrm{SU}(2)\times \mathrm{S}^1$ is free, so there is no time-collapsing. In this sense, a way of justifying the singularity presented on the metric $\ga_{-r^2}$ relies on the $\mathrm{S}^1$-forced immersion on $\mathrm{SU}(2)$ as its orbits. The singularity can be understood as a ``lack of perception of the full spacetime''. 

We recall that a smooth group action $G$ on a smooth manifold $M$ is said to be semi-free if every isotropy group is discrete. It is said to be \textit{almost semi-free} if some fixed points exist other than discrete isotropy subgroups. Motivated by the former example, we define:
\begin{definition}\label{def:almostlorentzian}
    A Riemannian manifold $(M,\ga)$ with an effective isometric almost semi-free action by $\mathrm{S}^1$ is said to be an \emph{almost Lorentzian manifold}. The (almost) Lorentzian metric defined on the principal stratum $M^{reg}$ in $(M,\ga)$ is given by a sufficiently negative $-r^2$-Cheeger deformation of $\ga$.
\end{definition}
In Section \ref{sec:fixing}, we shall fix the metric on the fixed points so that we build a global Lorentzian metric on $\mathrm{SU}(2)$ as well as in any (almost) Lorentzian manifold.

In the next section, we take the previous description to obtain a higher dimension analogous to
\[\bar \pi \circ \mathrm{p} : \left(\mathrm{SU}(2)\times \mathbb{R},\mathrm{h}_{-r^{2}}\right)\rightarrow \left(\mathrm{SU}(2),\mathrm{g}_{-r^{2}}\right)\]
obtaining (almost) Lorentzian metrics in the classical seven-dimensional sphere $\mathrm{S}^7$ and in the Gromoll--Meyer exotic sphere: Both constructions coming from the diagram \eqref{eq:CDGM}. Before proceeding, however, we take advantage of equation \eqref{eq:curvaturaseccional} to check that Ricci curvature of $\left(\mathrm{SU}(2),g_{-r^{2}}\right)$ is positive but not constant. To prove this, we exploit the equation 
\begin{equation*}
    \kappa_{-r^{2}} = K_{\mathrm{SU}(2)} + z_{-r^{2}},
\end{equation*}
where $K_{\mathrm{SU}(2)}$ is the unreduced sectional curvature of the metric $\langle \cdot,\cdot\rangle_{\mathrm{SU}(2)}$ induced by $\langle\cdot,\cdot\rangle_{\bb H} := \mathrm{Re}(\cdot\overline{\cdot})$, which coincides with the negative of the Riemannian metric in $\mathrm{SU}(2)$ induced by a multiple of the Cartan--Killing form in $\lie{su}(2)$. Aiming generality on the computation, we will not use the explicit metric description furnished earlier but provide a general perspective using the formulae in Section \ref{sec:Cheeger}.

\begin{theorem}\label{thm:einsteintoy}
    For any $r > 1$ we have that $(\mathrm{SU}(2),\ga_{-r^2})$ is an almost Lorentzian manifold with time-like vectors given by
    \begin{equation*}
        \langle X,Y\rangle_{\bb H} - tt'r^{-2} < 0
    \end{equation*}
    and light-like vectors are given by
    \begin{equation*}
        \langle X,Y\rangle = tt'r^{-2}
    \end{equation*}
    where 
    \begin{align*}
        \mathrm{Re}((1-a\mathbf{i}\overline{\mathbf{i}a}\bar X) &= -r^{-2}t\\
        \mathrm{Re}((1-a\mathbf{i}\overline{\mathbf{i}a}\bar Y)) &= -r^{-2}t'\\
       \mathrm{Re}(X\mathbf{i}) = \mathrm{Re}(Y\mathbf{i})   &= 0
    \end{align*}
    $\forall~a\in \mathrm{SU}(2),~\forall X, Y \in \bb H,~t,t'\in \bb R$. Moreover, $\mathrm{Ric}_{\ga_{-r^{-2}}}$ is positive.
\end{theorem}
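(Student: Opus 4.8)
The plan is to split the statement into its two parts: the description of causal vectors, and the positivity of Ricci curvature. The first part is essentially unpacking the semi-Riemannian metric \eqref{eq:semiriemanniansu(2)metric} together with the explicit horizontal description \eqref{eq:semiriemanniansu(2)}, so I would present it as a direct computation: a tangent vector at $a$ is written as $t\partial_\theta + X\mathbf i a$ subject to the two constraints $\mathrm{Re}((1-a\mathbf i\overline{\mathbf i a})\bar X) = -r^{-2}t$ and $\mathrm{Re}(X\mathbf i) = 0$ that single out the horizontal lift, and then $\ga_{-r^2}(t\partial_\theta + X\mathbf i a, t\partial_\theta + X\mathbf i a) = |X|_{\bb H}^2 - t^2 r^{-2}$, so the time-like / light-like dichotomy is immediate from the sign of this quantity. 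Nothing subtle happens here; it is a restatement of the earlier subsection.

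For the positivity of $\mathrm{Ric}_{\ga_{-r^2}}$, the approach is to use $\kappa_{-r^2} = K_{\mathrm{SU}(2)} + z_{-r^2}$ as already indicated, but being careful about signs: $\langle\cdot,\cdot\rangle_{\mathrm{SU}(2)}$ is (a multiple of) the \emph{negative} of the bi-invariant metric, so the unreduced sectional curvature $K_{\mathrm{SU}(2)}$ is \emph{non-positive} on planes where it is nonzero (it is $-\tfrac14\|[X,Y]\|^2$ up to scaling), and vanishes exactly on planes containing the would-be vertical direction $\mathbf i a - a\mathbf i$. The point is that $\ga_{-r^2}$ is \emph{not} the Cheeger-deformed metric $\ga_t$ with $t>0$ but its analytic continuation to $t=-r^2$; nevertheless the algebraic formula \eqref{eq:curvaturaseccional} for $\kappa_t$ still holds as a polynomial/rational identity in $t$, so I can evaluate at $t=-r^2$. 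I would then compute $\mathrm{Ric}_{\ga_{-r^2}}$ by choosing, at a regular point $a$, a $\ga_{-r^2}$-orthonormal (in the indefinite sense) basis adapted to the splitting $\mathcal V_a \oplus \mathcal H_a$: one time-like unit vector $e_0$ essentially proportional to $\partial_\theta$ (the reparametrized vertical) and two space-like unit vectors $e_1, e_2$ spanning the horizontal distribution, which for $\mathrm{SU}(2)$ is two-dimensional. Then $\mathrm{Ric}_{\ga_{-r^2}}(v,v) = -\kappa_{-r^2}(v,e_0) + \kappa_{-r^2}(v,e_1) + \kappa_{-r^2}(v,e_2)$ for any $v$, and I would show this is positive by exploiting that the term $K_{\mathrm{SU}(2)}(v,e_j)$ contributes $\tfrac14\|[\,\cdot\,,\cdot\,]\|^2$ with the \emph{favorable} sign once the overall minus from the time-like slot and the minus from $\langle\cdot,\cdot\rangle_{\mathrm{SU}(2)} = -c\,Q$ are combined, while the $z_{-r^2}$ terms, which for $t<0$ are genuinely indefinite in sign, are controlled because $r>1$ keeps $1+tP = 1-r^2$ bounded away from $0$ and the relevant brackets on $\mathrm{SU}(2)$ are explicit.

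Concretely, since $\dim\mathrm{SU}(2)=3$ and the vertical is one-dimensional, all the Lie-algebra brackets reduce to the single structure constant of $\lie{su}(2)$, so I expect everything to collapse to a one-variable estimate: write $P=1$ on the vertical line (totally geodesic orbits, Remark \ref{rem:importante}(a)), so $P_{-r^2} = (1-r^2)^{-1}$, and plug into \eqref{eq:curvaturaseccional}; the $\tfrac{t^3}{4}\|[PU,PV]\|^2$ term vanishes because $[U,V]=0$ on a one-dimensional subalgebra, and the $z_{-r^2}$ term becomes $3(-r^2)\|(1-r^2)^{-1/2}\nabla^{\mathbf v}_{\overline X}\overline Y\|_Q^2$ up to the bracket piece which also dies, so $z_{-r^2} = \tfrac{-3r^2}{1-r^2}\|\nabla^{\mathbf v}_{\overline X}\overline Y\|_Q^2 = \tfrac{3r^2}{r^2-1}\|\nabla^{\mathbf v}_{\overline X}\overline Y\|_Q^2 \ge 0$. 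Thus $\kappa_{-r^2}(\overline X,\overline Y) = K_{\mathrm{SU}(2)}(\overline X,\overline Y) + \tfrac{3r^2}{r^2-1}\|\nabla^{\mathbf v}_{\overline X}\overline Y\|_Q^2$, and both summands have the right sign to make the Ricci sum positive after the time-like slot flips the sign of $K_{\mathrm{SU}(2)}$ — I would verify the horizontal slots $\kappa_{-r^2}(v,e_1)+\kappa_{-r^2}(v,e_2)$ do not overwhelm this, which on a $3$-manifold is a finite check. The non-constancy of $\mathrm{Ric}$ follows by exhibiting two points, or two tangent directions at one point, where the O'Neill/$z$-contribution differs, e.g. comparing $v=e_0$ against $v=e_1$; I would note the values explicitly.

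The main obstacle I anticipate is bookkeeping of signs and the legitimacy of evaluating the Cheeger curvature formula at the negative parameter $t=-r^2$: one must check that \eqref{eq:curvaturaseccional} is an honest algebraic identity valid for all $t$ for which $1+tP$ is invertible (which it is, as the derivation in \cite{mutterz} is purely tensorial and never uses $t\ge 0$ except to assert $z_t\ge 0$), and that the reparametrization $C_{-r^2}^{-1}$ is still well-defined and invertible at regular points, so that $\kappa_{-r^2}$ genuinely computes the unreduced curvature of $\ga_{-r^2}$ on the reparametrized planes. Once that is granted, positivity of Ricci is a short explicit estimate; the delicate accounting is purely in the two sign reversals (the indefinite metric's time-like slot, and $\langle\cdot,\cdot\rangle_{\mathrm{SU}(2)}$ being the negative bi-invariant metric).
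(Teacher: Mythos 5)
Your overall plan tracks the paper's, but there is a genuine sign error at the heart of your positivity argument, and the step you declare ``a finite check'' is in fact the entire content of the paper's proof, which you have not carried out.

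On the sign: you assert that $\langle\cdot,\cdot\rangle_{\mathrm{SU}(2)}$ is a \emph{negative} multiple of the bi-invariant metric $Q$ and hence that $K_{\mathrm{SU}(2)}$ is non-positive. This misreads the paper's sentence: $\langle q,r\rangle_{\bb H}=\mathrm{Re}(q\bar r)$ is the standard positive-definite inner product on $\bb H\cong\bb R^4$, and its restriction to $\mathrm{SU}(2)\cong\mathrm{S}^3$ is the round metric, i.e.\ a \emph{positive} multiple of $Q$ (it is the negative of the Cartan--Killing form, which is itself negative definite, not the negative of the Riemannian metric). Consequently $K_{\mathrm{SU}(2)}>0$ on nondegenerate planes, and your claim that ``the two minuses combine favorably'' does not happen: the time-like slot $-\kappa_{-r^2}(v,e_0)$ really does contribute with the \emph{unfavorable} sign, and the space-like slots contribute positively. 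Showing that the positive contributions dominate is exactly the nontrivial point and is not automatic from the signs you set up.

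On completeness: the paper does not estimate the three terms $\pm\kappa_{-r^2}(v,e_j)$ directly. Instead it expands each $\kappa_{-r^2}$ using the Gray--O'Neill formulae (the totally-geodesic-fiber curvature identities from \cite{gw}) together with the vanishing of the mixed curvature terms for one-dimensional fibers, and the end result is
\[
\mathrm{Ric}_{\ga_{-r^{2}}}(\overline X) = \mathrm{Ric}^{\mathbf h}(X) + (1-r^2)^{-2}\sum_{i=1}^2|A^*_{e_i}U^*|_{\ga}^2 - (1-r^{2})^{-1}|A^*_Xe_3|_{\ga}^2 -3r^{2}(1-r^2)^{-1}\sum_{i=1}^2|A_Xe_i|_{\ga}^2,
\]
in which every coefficient is non-negative once $r>1$ (since $(1-r^2)^{-1}<0$) and $\mathrm{Ric}^{\mathbf h}(X)>0$ because $\mathrm{SU}(2)/\mathrm{S}^1\cong\mathrm{S}^2$ has positive curvature. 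The potentially negative Lorentzian contribution is absorbed into this decomposition: after the cancellations there is no unfavorable term left to estimate against. Your proposal omits this computation entirely, and the ``on a 3-manifold a finite check'' phrase is doing all the work. Note also that the basis the paper uses is $\{e_1,e_2,\sqrt{-1}(1-r^2)^{1/2}e_3\}$ --- the $\sqrt{-1}(1-r^2)^{1/2}$ factor is a real negative scalar for $r>1$, so this is just a convenient encoding of the $\varepsilon_3=-1$ signature sign you wrote out explicitly; the two bookkeeping conventions are equivalent. Your remark that \eqref{eq:curvaturaseccional} is an algebraic identity valid at $t=-r^2$ wherever $1+tP$ is invertible is correct, and the paper does rely on it (implicitly). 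In short: fix the sign of $K_{\mathrm{SU}(2)}$ and actually carry out the Gray--O'Neill expansion; the positivity does not follow from sign-counting alone.
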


\begin{proof}
The first part of the statement was already obtained by our explicit description given by equations \eqref{eq:semiriemanniansu(2)} and \eqref{eq:semiriemanniansu(2)metric}. For the claim on the positivity of Ricci curvature, here we only compute the Ricci curvature in the regular part of the $\mathrm{S}^1$-action, that is, in the principal stratum. The analysis containing the behavior of Cheeger deformations for the fixed points shall be approached in Theorem \ref{thm:einsteingeneral}. Assuming that $\mathrm{S}^1$ is totally geodesic in $\mathrm{SU}(2)$ for the initial metric $\ga$, we can take $P = 1$ for the equation \eqref{eq:curvaturaseccional} in Theorem \ref{thm:curvaturasec}. Observe that such an assumption loses no generality due to Remark \ref{rem:importante}. Hence, we have that
\[z_{-r^{2}}(\overline X,\overline Y) = -3r^2(1-r^{2})^{-1}\left\|\nabla^{\mathbf{v}}_{\overline X}\overline Y\right\|_{\mathrm{S}^1}^2\]
and so
\[\mathrm{\kappa}_{-r^{2}}(\overline X,\overline Y) = \kappa_0(\overline X,\overline Y) -3r^2(1-r^{2})^{-1}\left\|\nabla^{\mathbf{v}}_{\overline X}\overline Y\right\|_{\mathrm{S}^1}^2.\]
Take $\{e_1,e_2\} \in \cal H_a$ a $\ga$-orthonormal basis to the horizontal space at $a\in\mathrm{SU}(2)$ and $\{e_3\} \in \mathbf{i}\bb R$ a $\ga$-unit vector. We shall add a $\sqrt{-1}$-multiplying factor on this element since we are considering $r^2-1>0$, so $(1-r^2)^{1/2}$ is a complex number. Then $\{\sqrt{-1}(1-r^2)^{1/2}e_3\}$ is a $\ga_{-r^{2}}$-unit vector in $\cal V_a$. Hence $\{e_1,e_2,\sqrt{-1}(1-r^2)^{1/2}e_3\}$ is a $\ga_{-r^{2}}$-orthonormal basis in $T_a\mathrm{SU}(2)$. 

Using the notation introduced in Section \ref{sec:Cheeger}, we denote as $\overline X = X + U^*$ any tangent vector to $(\mathrm{SU}(2),\ga)$. This manner 
\begin{align*}
\mathrm{Ric}_{\ga_{-r^{2}}}(\overline X) = \sum_{i=1}^2\mathrm{\kappa}_{-r^{2}}((1-r^2)^{-1}(\overline X),e_i) + \mathrm{\kappa}_{-r^{2}}((1-r^2)^{-1}(\overline X),\sqrt{-1}(1-r^2)^{1/2}e_3)\\
= \sum_{i=1}^2K_{\mathrm{SU}(2)}(e_i,X+ (1-r^2)^{-1}U^*) - (1-r^2)^{-1}K_{\mathrm{SU}(2)}(e_3,X+(1-r^2)^{-1}U^*) \\
-3r^2(1-r^{2})^{-1}\left\{\sum_{i=1}^2\left\|\nabla^{\mathbf{v}}_{e_i} X +(1-r^2)^{-1}U^*\right\|_{\mathrm{S}^1}^2 + \left\|\nabla^{\mathbf{v}}_{e_3} X +(1-r^2)^{-1}U^*\right\|_{\mathrm{S}^1}^2\right\}.
\end{align*}

To the following formulae, we recall that the O'Neill tensor is given by $A_XY:= \h[X, Y]^{\mathbf{v}}$ for any $X, Y\in \cal H$. The superscript $\mathbf v$ denotes 'the vertical component of.' We shall often consider $A^*_X$, the $\ga$-dual of $A_X$. Moreover, the shape operator $S_X$, i.e., the $\ga$-dual of each orbit's second fundamental $\sigma$, is the map $S_X: \cal V \rightarrow \cal V$. According to \cite[Theorem 1.5.1, p. 28]{gw}
\begin{align*}
    K_{\mathrm{SU}(2)}(e_i,X+(1-r^2)^{-1}U^*) &= K_{\mathrm{SU}(2)}(e_i,X) + (1-r^2)^{-2}K_{\mathrm{SU}(2)}(e_i,U^*)\\ + 2(1+r^2)^{-1}R_{\ga}(e_i,X,U,e_i) -3r^{2}(1-r^2)^{-1}|A_{X}e_i|_{\ga}^2 \\
    K_{\mathrm{SU}(2)}(e_3,X+(1-r^2)^{-1}U^*) &= K_{\mathrm{SU}(2)}(e_3,X) + 2(1+r^2)^{-1}R_{\ga}(e_3,X,U,e_3)\\
    \nabla^{\mathbf{v}}_{e_i}(X +(1-r^2)^{-1}U^*) &= A_{e_i}X - (1-r^2)^{-1}S_{e_i}U^*\\
    \nabla^{\mathbf{v}}_{e_3}(X +(1-r^2)^{-1}U^*) &= S_{X}(e_3)
\end{align*}
so the totally geodesic fiber assumption ensures that
\begin{multline*}
    \mathrm{Ric}_{\ga_{-r^{2}}}(\overline X) = \mathrm{Ric}^{\mathbf h}(X) + (1-r^2)^{-2}\sum_{i=1}^2|A^*_{e_i}U^*|_{\ga}^2 - (1-r^{2})^{-1}|A^*_Xe_3|_{\ga}^2 \\+2(1+r^2)^{-2}R_{\ga}(e_3,X,U,e_3) + 2\sum_{i=1}^2(1+r^2)^{-1}R_{\ga}(e_i,X,U^*,e_i) -3r^{2}(1-r^2)^{-1}\sum_{i=1}^2|A_Xe_i|_{\ga}^2.
\end{multline*}
Moreover, it can also be checked from the same formulae that
\begin{equation*}
  R_{\ga}(e_i,X,U^*,e_i) = \ga\left((\nabla^{\mathbf v}_iA)_iX,U^*\right)  
\end{equation*}
\begin{equation*}
    R_{\ga}(e_3,X,U,e_3) = 0.
\end{equation*}
\begin{equation*}
\ga\left((\nabla^{\mathbf v}_iA)_iX,U^*\right) = 0.
\end{equation*}
Hence,
\begin{multline*}
    \mathrm{Ric}_{\ga_{-r^{2}}}(\overline X) = \mathrm{Ric}^{\mathbf h}(X) + (1-r^2)^{-2}\sum_{i=1}^2|A^*_{e_i}U^*|_{\ga}^2 - (1-r^{2})^{-1}|A^*_Xe_3|_{\ga}^2 -3r^{2}(1-r^2)^{-1}\sum_{i=1}^2|A_Xe_i|_{\ga}^2. \qedhere
\end{multline*}
\end{proof}

\section{A semi-Riemannian metric on the standard sphere vs. on the Gromoll and Meyer exotic sphere}

In complete analogy to the previous section, in this section, we deal once more with the bundle diagram \eqref{eq:CDGM} to provide Lorentzian metrics on the classical $\mathrm{S}^7$-sphere as well as in the Gromoll--Meyer sphere $\Sigma_{GM}^7$. As we shall see, the metrics to be provided enjoy the property that time collapse on the existent fixed points in $\mathrm{S}^7$ and $\Sigma_{GM}^7$. Suppose it is the case that compact $7$-dimensional manifolds recover some physical significance (we need to find out if this is the case; however, see \cite{cmp/1103943444} for an account). However, the time-collapsing occurs for the same points up to some identification; see Proposition \ref{prop:isometric}. In this manner, how to perceive a difference due to the choice of a smooth structure needs to be clarified, as we explain next.

\subsection{An induction on the construction of the Gromoll--Meyer exotic sphere}

We review the work initiated by C. Durán in \cite{duran2001pointed} and further developed by L. D. Sperança in his Ph.D. thesis \cite{speranca2012Phd} on producing exotic manifolds in an \emph{equivariant manner}. As we shall see, this can be considered an induction of the Gromoll--Meyer exotic construction described in \eqref{eq:CDGM}. The reference \cite{SperancaCavenaghiPublished} already contains a significative description that we choose to recall once more, bringing the parallels and possibly making more explicit how exoticity emerges from such constructions. More importantly, the feasibility of producing Lorentzian metrics is due to this machinery.

Consider a principal bundle $G\hookrightarrow P \to M$ with principal action $\bullet$ whose total space is compact and connected. Assume that $G$ is also compact and connected and that there is another action by $G$ on $P$, which we denote by $\star$, such that $\bullet$ and $\star$ commute. This makes $P$ a $G\times G$-manifold. If we assume that $\star$ is also free, we get the following $\star$-\textit{diagram} of bundles:
\begin{equation}\label{eq:stardia}
\begin{xy}\xymatrix{& G\ar@{..}[d]^{\bullet} & \\ G\ar@{..}[r]^{\star} & P\ar[d]^{\pi}\ar[r]^{\pi'} &M'\\ &M&}\end{xy}
\end{equation}
The manifold $M$ in the \eqref{eq:stardia} diagram denotes the quotient of $P$ for the action $\bullet$, while $M'$ denotes the quotient of $P$ for action $\star$.

\begin{definition}
The \eqref{eq:stardia} diagram is called $\star$-\textit{(star) diagram}. In short, we will denote it by $M'\leftarrow P \rightarrow M$.
\end{definition}

Following \cite{SperancaCavenaghiPublished}, since $\bullet$ and $\star$ commute, $\bullet$ induces an action on quotient $M'$, and $\star$ induces an action on $M$. We denote orbit spaces according to such induced actions by $M'/\bullet$ and $M/\star$, or, when the context is clear, simply by $M'/G$ and $M/G.$

Let $M$ be a $G$-manifold (on the left) and $\{U_i\}$ be a collection of open sets of $M$ that is $G$-invariant. Reducing $U_i$ if necessary, we can assume that $GU_i = U_i.$
     Given two open $G$-invariant set $U_i,~U_j,$ denoted by $U_{ij} := U_i\cap U_j.$
    
     A collection $\phi_{ij} : U_{ij} \to G$ is said to be a \textit{collection }$\star$ if it satisfies the \textit{cocycle condition},
     \begin{equation}\label{eq:cocyclocondition}
         \phi_{ij}(x)\phi_{jk}(x)\phi_{ki}(x) = \phi_{ii}(x),~\forall x \in U_{ij},
     \end{equation}
     and also,
     \begin{equation}\label{eq:covariance}
         \phi_{ij}(gx) = g\phi_{ij}(x)g^{-1},~\forall i,j,~\forall g\in G,~\forall x\in U_{ij} .
     \end{equation}
    
     The condition \eqref{eq:covariance} guarantees that the \textit{adjoint} map
     \[\widehat{\phi}_{ij} : U_{ij}\to U_{ij} \]
     \begin{equation}
         \widehat{\phi}_{ij} : x\mapsto \phi_{ij}(x)x.
     \end{equation}
     is \textit{equivariant}. In this way, $\widehat{\phi}_{ij}$ defines an equivalent diffeomorphism over $U_{ij}$. 
    
     Let $\{\phi_{ij} : U_{ij}\to G\}$ be a collection $\star.$ We define
     \[\bigcup_{\widehat \phi_{ij}}U_i\]
     as the quotient space associated with the following equivalence relation:
     \[x\in U_{ij}\sim \widehat{\phi}_{ij}x\in U_{ij}.\]
     I.e.,
     given $x,y \in M,$ we say that $x$ is equivalent to $y$ if $\exists i,j$ for which
     $x,y\in U_{ij}$ and yet, $\phi_{ij}(x)x = y.$
    
     The foundation of star diagram construction lies in the next theorem. We will assume the existence of a collection $\star$ on the $G$-manifold $M$ based on a principal bundle $\pi:P \to M.$ In this case, we say that $\pi$ is \textit{linked} to the collection $\star$ defined via $M$.
    
     \begin{theorem}\label{thm:howtobuild}
     Let $\pi : P \to M$ be the principal bundle associated with a collection $\star$ given by $\{\phi_{ij} : U_{ij} \to G\}.$ Then $P $ admits a new action, denoted by $\star$, such that
     \begin{enumerate}
         \item The action $\star$ is free in $P$,
         \item The quotient $P/\star$ is a $G$-manifold equivalently diffeomorphic to
         \[M' := \bigcup_{\widehat{\phi}_{ij}}U_i,\]
         \item The star diagram \eqref{eq:stardia} obtained from this construction is such that if $(G\times G)_p$ denotes the isotropy group on $p$ with respect to the action by juxtaposition
         \[(r,s)p := rps^{-1},\]
         where $G\times \{e\}$ represents the action $\star$, $\{e\}\times G$ represents the principal action, and ``$e$'' denotes the neutral element of $G$ , then there exists $g\in G$ such that
         \begin{equation}
             (G\times G)_p = \{(h,ghg^{-1}) : g\in G_{\pi(p)}\}.
         \end{equation}
     \end{enumerate}
     \end{theorem}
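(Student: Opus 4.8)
The plan is to reconstruct the $\star$-action on $P$ directly from the clutching data $\{\phi_{ij}\co U_{ij}\to G\}$, verify the three listed properties in turn, and isolate the isotropy computation as the genuinely delicate step. First I would recall that the principal bundle $\pi\co P\to M$ linked to the collection $\star$ is by definition obtained by gluing the local trivializations $U_i\times G$ along $U_{ij}$ via the principal transition functions; the hypothesis is precisely that these transition functions can be taken of the covariant form coming from $\{\phi_{ij}\}$. Concretely, on a local chart $U_i\times G$ one sets
\begin{equation*}
h\star(x,s) := (hx,\, \phi_{?i}(x)\, h\, s)
\end{equation*}
— the point being to check that this prescription is independent of the chart. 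Here one uses the cocycle condition \eqref{eq:cocyclocondition} together with the covariance \eqref{eq:covariance}: when one passes from the chart $i$ to the chart $j$ the change of trivialization is right multiplication by $\phi_{ji}(x)$ in the $G$-factor, and \eqref{eq:covariance} is exactly what makes $h\star(-)$ commute with that change of trivialization, so the local formulas patch to a global action. That the resulting action commutes with the principal action $\bullet$ (right multiplication in the $G$-factor) is immediate, since $h\star$ acts by left multiplication on that factor.

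Next I would check property (1), freeness of $\star$. If $h\star p = p$, then projecting by $\pi$ gives $hx = x$ in $M$, so $h\in G_x$ is in the isotropy of the base point; reading the $G$-factor of the fixed-point equation in a chart then forces $\phi_{?i}(x)\,h = e$, hence $h=e$ after using that $\phi_{ii}(x)=e$ (a consequence of \eqref{eq:cocyclocondition} with $i=j=k$). For property (2), the quotient $P/\star$: I would build the map $P/\star\to\bigcup_{\widehat\phi_{ij}}U_i$ chart by chart. On $U_i\times G$ the $\star$-orbits are parametrized by $U_i$ (the orbit through $(x,s)$ meets the slice $\{s=e\}$ in a single point because of freeness), so one gets a candidate local diffeomorphism $P/\star\supset \pi^{-1}(U_i)/\star \xrightarrow{\sim} U_i$; comparing the $i$- and $j$-charts, the identification of the overlaps is exactly $x\sim\widehat\phi_{ij}(x)=\phi_{ij}(x)x$, which is the defining equivalence relation of $M'=\bigcup_{\widehat\phi_{ij}}U_i$. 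Equivariance for the residual $G$-action (the descent of $\bullet$) is then a formality, and one records the second projection $\pi'\co P\to M'$.

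The main obstacle is property (3), the shape of the isotropy group $(G\times G)_p$ for the juxtaposition action $(r,s)p=rps^{-1}$. The plan is: fix $p\in P$, let $x=\pi(p)\in M$ be its base point, and observe that $(r,s)\in(G\times G)_p$ forces, on projecting by $\pi$, that $r\in G_x$ — so the $\star$-factor of any isotropy element lives in the isotropy subgroup $G_x$ of the base. Conversely, given $h\in G_x$, one must determine the unique (by freeness of the principal action) element $k\in G$ with $(h,k)p=p$, and show $k=ghg^{-1}$ for a single $g$ independent of $h$. This $g$ is read off from a local trivialization: writing $p\in U_i\times G$ as $(x, g)$ (here $g$ is precisely the $G$-coordinate of $p$ in the chart $i$), the fixed-point equation $hp = pk$ unwinds, using the local formula for $\star$, to $\phi_{ii}(x)=e$ on the nose and $k = g^{-1} h g$ (or $ghg^{-1}$, according to the handedness conventions of the diagram). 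The delicate points are (i) checking that this $g$ does not depend on the auxiliary chart $i$ — which again is forced by the covariance \eqref{eq:covariance}, since changing charts conjugates both the coordinate $g$ and the element $\phi_{ij}(x)\in G$ compatibly — and (ii) matching the left/right and inverse conventions in $(r,s)p=rps^{-1}$ with those in \eqref{eq:GMprincipalaction}–\eqref{eq:GMstaraction} so that the exponent signs come out as stated. Once $g$ is pinned down, the description $(G\times G)_p=\{(h,ghg^{-1}) : h\in G_x\}$ follows, and one notes that $G_x=G_{\pi(p)}$ is exactly the isotropy of the principal base, completing the proof.
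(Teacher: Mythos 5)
The paper itself does not prove Theorem~\ref{thm:howtobuild}; it is recalled from the earlier works of Dur\'an and Speran\c{c}a, which are cited at the head of that subsection. So there is no internal proof to compare against, and your reconstruction has to stand on its own. In broad outline your plan is correct and matches the standard construction, but two specific points need fixing.

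First, the local formula you write for the $\star$-action, namely $h\star(x,s):=(hx,\,\phi_{?i}(x)\,h\,s)$, is garbled: there is no natural index to fill the ``$?$'' slot, and the correct local formula does not contain a transition function at all. On the chart $U_i\times G$ one should simply set $h\star(x,s)_i:=(hx,\,hs)_i$, i.e., the twisted/diagonal action of \eqref{eq:star}, and the only nontrivial step is to check that this patches across charts. That check is where \eqref{eq:covariance} enters: the equivalence is $(x,s)_i\sim(x,\phi_{ji}(x)s)_j$, and applying $h\star$ on each side gives $(hx,hs)_i$ and $(hx,h\phi_{ji}(x)s)_j$; these are identified precisely because $\phi_{ji}(hx)=h\phi_{ji}(x)h^{-1}$ gives $\phi_{ji}(hx)\cdot hs=h\phi_{ji}(x)s$. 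Putting the transition function inside the formula, as your ``$\phi_{?i}$'' does, confuses the action with the patching condition and leads to the spurious step ``$\phi_{?i}(x)h=e$ forces $h=e$ via $\phi_{ii}=e$'' in your freeness argument; with the correct local formula freeness is immediate, since $(hx,hs)_i=(x,s)_i$ already forces $h=e$ coordinatewise.

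Second, in part (3), your remark that ``this $g$ does not depend on the auxiliary chart'' is not literally true: the $G$-coordinate $g$ of $p$ changes to $\phi_{ji}(x)g$ under a change of chart. What is chart-independent is the resulting isotropy subgroup $\{(h,\,g^{-1}hg):h\in G_{\pi(p)}\}$, and the reason is that covariance applied to $h\in G_x$ gives $\phi_{ji}(x)=\phi_{ji}(hx)=h\phi_{ji}(x)h^{-1}$, so every element of $G_x$ commutes with $\phi_{ji}(x)$; thus $(\phi_{ji}(x)g)^{-1}h(\phi_{ji}(x)g)=g^{-1}hg$. You should state that explicitly rather than gesture at covariance. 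With these two corrections, your chart-by-chart identification of $P/\star$ with $\bigcup_{\widehat{\phi}_{ij}}U_i$ and the isotropy computation in a fixed chart are both sound, and your note about the handedness convention ($g^{-1}hg$ versus $ghg^{-1}$) correctly accounts for the sign in the statement (which, incidentally, contains a typo: the bound variable in $\{(h,ghg^{-1}):g\in G_{\pi(p)}\}$ should be $h$, not $g$).
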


     Let $M'\leftarrow P \rightarrow M$ be a $\star$-bundle. Denote $\bullet$ the principal action in $P.$ Remembering that since $\bullet$ and $\star$ commute, $\star$ induces an action by $G$ in the quotient $M$ and $\bullet $ induces an action by $G$ on $M'$. Thus, we will see $M$ and $M'$ as $G$-manifolds associated with such actions.

An efficient method to compare the geometries of $M$ and $M'$ consists in constructing a metric $G\times G$-invariant on $P$ so that the projections $\pi$ and $\pi'$ induce Riemannian submersion metrics in $M$ and $M'$, respectively. In particular, if we start with a metric $G\times G$-invariant on $P$, if $g_M$ denotes the Riemannian submersion metric induced by $\pi$ on $M$, we prove the following theorem:

\begin{theorem}[Corollary 5.2 in \cite{SperancaCavenaghiPublished}]\label{thm:password}
Let $M'\leftarrow P \rightarrow M$ be a $\star$-bundle. Then, a metric $\bullet$-invariant exists on $M'$ such that $M/G$ and $M'/G$ are isometric as metric spaces.
\end{theorem}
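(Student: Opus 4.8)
The plan is to realise both orbit spaces as one and the same metric space --- the quotient of $P$ by the whole $G\times G$-action --- and to argue entirely in the category of metric spaces, since the residual $G$-actions on $M$ and on $M'$ need not be free, so no Riemannian submersion is available ``downstairs'' on the $M$-side.

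First I would fix the data on $P$. As $G$ is compact and connected and $P$ is compact, averaging produces a $G\times G$-invariant metric $g_P$ on $P$; by Remark~\ref{rem:importante}(b) one may in addition arrange the $\bullet$-orbits to be totally geodesic without changing the $\bullet$-horizontal distribution. By hypothesis $g_M$ is the Riemannian submersion metric induced by $\pi\colon (P,g_P)\to M$, and I define $g_{M'}$ to be the Riemannian submersion metric induced by $\pi'\colon (P,g_P)\to M'$; because $g_P$ is $G\times G$-invariant and $\bullet,\star$ commute, $g_M$ is invariant under the residual $\star$-action on $M$ and $g_{M'}$ is invariant under the residual $\bullet$-action on $M'$. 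This $g_{M'}$ is the $\bullet$-invariant metric asserted in the statement.

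Next comes the metric identification. Since $P$ is compact, hence complete, base distance in a Riemannian submersion equals total-space distance between fibres, so $d_M(\pi(p),\pi(q))=\inf_{h\in G}d_P(p,\,h\bullet q)$; that is, $(M,g_M)$ is $P/\bullet$ equipped with its orbit metric, and likewise $(M',g_{M'})$ is $P/\star$ with its orbit metric. Since $G$ is compact, the $\star$-orbits in $M$ are compact and the residual $\star$-action on $(M,g_M)$ is by isometries, so $M/\star$ carries the quotient metric $\bar d([x],[y])=\inf_{g\in G}d_M(x,\,g\star y)$. Feeding the previous display into this and using that $\bullet$ and $\star$ commute, a short infimum chase yields
\[
\bar d\bigl([\pi(p)],[\pi(q)]\bigr)=\inf_{(g,h)\in G\times G} d_P\bigl(p,\,(g\star)(h\bullet)q\bigr),
\]
which is exactly the quotient metric of $P/(G\times G)$. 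Running the identical computation with $(\bullet,\pi)$ and $(\star,\pi')$ interchanged shows $M'/\bullet$ carries this very same quotient metric of $P/(G\times G)$. Since $M=P/\bullet$ and $M'=P/\star$, the canonical bijection $M/\star\to M'/\bullet$ induced by the identity on the common orbit space $P/(G\times G)$ is then an isometry of metric spaces.

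The step I expect to be the main obstacle is precisely this double-quotient identification: one must check carefully that the iterated quotient metric agrees with the simultaneous one. This rests on (i) identifying the Riemannian-submersion metric on $M$ with the orbit-quotient metric on $P/\bullet$ --- valid because $P$ is complete and horizontal lifts of minimising geodesics of $M$ remain minimising in $P$ --- and (ii) the elementary but care-demanding fact that, $\bullet$ and $\star$ commuting and $G$ being compact (closed compact orbits, infima attained, Hausdorff quotients), the iterated quotient $(P/\bullet)/\star$ is isometric to $P/(G\times G)$. The remaining points --- averaging to obtain $g_P$, invariance of the induced metrics, well-definedness of the residual actions --- are routine.
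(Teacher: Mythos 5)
Your proof is correct and follows the same underlying idea as the paper's---both arguments identify $M/G$ and $M'/G$ with the single space $P/(G\times G)$---but you implement the identification abstractly, whereas the paper implements it geometrically. The paper fixes a $G\times G$-invariant Kaluza--Klein metric on $P$ (Proposition~\ref{prop:oneform}), observes that $\pi^{-1}(Gx)=(G\times G)p=\pi'^{-1}(Gx')$ for $x=\pi(p)$, $x'=\pi'(p)$ to get the bijection $i\colon M/G\to M'/G$, and then shows $i$ is an isometry by taking the $\cal H''$-horizontal minimizing geodesic $\gamma$ joining two $G\times G$-orbits in $P$ and noting that $\pi\circ\gamma$ and $\pi'\circ\gamma$ are minimizing geodesics of equal length between the corresponding orbits in $M$ and $M'$. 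You reach the same bijection purely through the metric-quotient formalism: realizing each of $M$, $M'$ as an orbit metric space over $P$, applying the orbit-metric formula $\bar d([x],[y])=\inf_g d(x,gy)$ a second time, and checking that the iterated infimum over $G$ twice equals the infimum over $G\times G$. What you buy is that no explicit geodesic analysis on $P$ is needed---only the standard facts that a Riemannian submersion from a complete total space realises base distance as distance-to-fibre and that orbit metrics of compact isometry groups are genuine metrics---so the argument is a bit more elementary and arguably more robust. What the paper's version buys is that it directly produces the minimizing geodesics between orbits, which is a more quantitative statement about how the isometry works. Two small cosmetic remarks: the appeal to Remark~\ref{rem:importante}(b) (totally geodesic $\bullet$-orbits) is never used in your argument and can be dropped; and the paper does not posit $g_M$ in advance but defines it as the submersion metric of the chosen $g_P$ exactly as you define $g_{M'}$, so both metrics are being constructed, not one given.
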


Therefore, let
\[\cal H^{''} := \{X\in \Gamma(TP) : X\perp T(G\times G)\}\]
that is, $\cal H^{''}$ denotes the horizontal space according to the action $(r,s)p \mapsto rps^{-1}.$ If $M$ and $M'$ are provided with Riemannian submersion metrics, then the constraints of $d\pi$ and $d\pi'$ to $\cal H^{''}$ induce isometries between $\cal H^{''}$ and $\cal H\subset TM$, $\cal H'\subset TM',$ respectively.

Remember that the action of $G$ on $P$ induces actions on $TP$ (by the differential of the representation). We abuse notation so that the action induced on $TP$ is written as $gX,~g\in G,~X\in \Gamma(TP).$

  \begin{definition}
A $1$-connection form $\omega_0 : \Gamma(TP) \to \lie g$ in a principal bundle $\pi : G\hookrightarrow P \to M$ consists of a smooth mapping such that
\begin{enumerate}
     \item $\omega_0(U^*) = U,~\forall U\in \lie g,$ where $U^*$ denotes the field of action associated with $U$,
     \item $(\omega_0)_{pg}(Xg) = \mathrm{Ad}_{g^{-1}}(\omega_0)_p(X).$
\end{enumerate}
\end{definition}

Given a $1$-form of connection $\omega : \Gamma(TP) \to \lie g$ and an $G$-invariant metric $g_M$ on $M$, we induce in $P$ a \textit{Kaluza-Klein metric} defined by
\begin{equation}\label{eq:kaluzaklein}
     \langle X,Y\rangle := g_M(d\pi X,d\pi Y) + Q\left(\omega(X),\omega(Y)\right),
\end{equation}
where $Q$ is a bi-invariant metric on $G$. If $g_M$ is $G$-invariant and $\omega$ satisfies
$\omega_{gp}(gX) = \omega_p(X),~\forall g\in G,X\in \Gamma(TP),$ so $\langle\cdot,\cdot\rangle$ is $G\times G$-invariant.

\begin{proposition}\label{prop:oneform}
There is a $1$-connection form $\omega : \Gamma(TP) \to \lie g$ such that for all $X\in \Gamma(TP)$ and all $r\in G $
\[\omega_{rp}(rX) = \omega_p(X).\]
Furthermore, $\langle\cdot,\cdot\rangle$ defined by \eqref{eq:kaluzaklein} is $G\times G$-invariant.
\end{proposition}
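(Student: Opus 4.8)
The plan is to build $\omega$ by averaging an arbitrary principal connection over the $\star$-action, using that $G$ is compact and that $\bullet$ and $\star$ commute. Fix any connection $1$-form $\omega_0\colon\Gamma(TP)\to\lie g$ for the principal bundle $\pi\colon P\to M$ relative to $\bullet$ (such forms exist by compactness of $G$). Let $dr$ be the normalized bi-invariant Haar measure on $G$, and for $X\in T_pP$ write $rX:=d(\star_r)_p(X)\in T_{r\star p}P$. Define
\[
\omega_p(X):=\int_G (\omega_0)_{r\star p}(rX)\,dr.
\]
Since $\star$ acts by diffeomorphisms and $\omega_0$ is smooth, $\omega$ is a smooth $\lie g$-valued $1$-form; the finiteness of Haar measure is what makes the integral legitimate.

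Next I would verify that $\omega$ is still a connection form for $\bullet$. For axiom (1): if $U^{*}$ is the fundamental field of the $\bullet$-action attached to $U\in\lie g$, commutativity of the two actions gives $d(\star_r)(U^{*}_p)=U^{*}_{r\star p}$, so $(\omega_0)_{r\star p}(rU^{*}_p)=U$ for every $r$, and integrating against the normalized measure yields $\omega_p(U^{*}_p)=U$. For axiom (2): using $r\star(p\bullet g)=(r\star p)\bullet g$ and the fact that $d(\star_r)$ commutes with $d(\bullet_g)$, one gets $r(Xg)=(rX)g$ and $r\star(p\bullet g)=(r\star p)\bullet g$, hence
\[
\omega_{pg}(Xg)=\int_G (\omega_0)_{(r\star p)\bullet g}\bigl((rX)g\bigr)\,dr
=\int_G \Ad_{g^{-1}}(\omega_0)_{r\star p}(rX)\,dr
=\Ad_{g^{-1}}\omega_p(X),
\]
since $\Ad_{g^{-1}}$ is linear and independent of $r$. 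Finally, $\star$-invariance of $\omega$ is exactly right-invariance of Haar measure: substituting $r'=rs$,
\[
\omega_{s\star p}(sX)=\int_G (\omega_0)_{(rs)\star p}\bigl((rs)X\bigr)\,dr=\int_G (\omega_0)_{r'\star p}(r'X)\,dr'=\omega_p(X).
\]

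For the last assertion, I would feed this $\omega$ into \eqref{eq:kaluzaklein} together with any $G$-invariant base metric $g_M$ on $M=P/\bullet$ (existence again by averaging over the induced $\star$-action). Invariance under $\bullet$ is standard: $d\pi$ annihilates the $\bullet$-orbit directions so the first summand is $\bullet$-invariant, while the second is $\bullet$-invariant because $\omega$ is $\Ad$-equivariant and $Q$, being bi-invariant, is $\Ad$-invariant. Invariance under $\star$ follows because $\pi$ intertwines the $\star$-action on $P$ with the induced isometric $\star$-action on $M$, making the first summand invariant, and because the identity $\omega_{s\star p}(sX)=\omega_p(X)$ proved above makes the second summand invariant. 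Hence $\langle\cdot,\cdot\rangle$ is $G\times G$-invariant.

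The only genuinely delicate point is keeping the two commuting actions straight — in particular checking that $d(\star_r)$ sends $\bullet$-fundamental fields to $\bullet$-fundamental fields and commutes with $d(\bullet_g)$ — combined with invoking compactness of $G$ both to guarantee the averaging integral makes sense and to use bi-invariance (hence right-invariance) of the Haar measure. Everything else is the routine check that a Kaluza–Klein metric assembled from an equivariant connection and an invariant base metric is invariant under the full structure group acting on both factors.
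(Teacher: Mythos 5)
Your proof is correct and takes essentially the same approach as the paper, namely averaging a fixed connection form $\omega_0$ for the $\bullet$-bundle over the commuting $\star$-action using Haar measure. The paper states the average formula (with a small notational slip writing $(\omega_0)_{pr}$ where the $\star$-translate $(\omega_0)_{r\star p}$ is clearly meant) and simply asserts the result follows from commutativity, while you carry out the verifications of the two connection axioms, $\star$-invariance via right-invariance of Haar measure, and $G\times G$-invariance of the Kaluza--Klein metric in full detail.
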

\begin{proof}
After taking the \textit{average} of $\omega_0,$
\[\omega_p(X) := \int_G(\omega_0)_{pr}(rX)dr\]
we immediately conclude the result since $\bullet$ and $\star$ commute.
\end{proof}

From now on, we fix a Kaluza-Klein metric $G\times G$-invariant in $P$. As the action of $G\times G$ on $P$ is by isometries, there is only one Riemannian submersion metric on $M'$. Furthermore, this is $G$-invariant. Observe that if $x = \pi(p)$ and $x' = \pi'(p),$ then $\pi^{-1}(Gx) = (G\times G)p = {\pi'}^{-1}(x)$, Thus, a bijection is defined
\[i : M/G \to M'/G.\]
In particular, $i$ is an isometry. Indeed, if $\gamma : [0,1] \to P$ is a horizontal geodesic $\cal H''$, then $\pi \circ \gamma$ and $\pi'\circ \gamma$ are horizontal geodesics to the orbits of $G$ in $M$ and $M'$, respectively. Therefore, given $p_1,p_2\in P$, if $\gamma$ denotes the smallest geodesic segment connecting the orbits $(G\times G)p_1$ and $(G\times G)p_2,$ then $\pi\circ \gamma$ and $\pi'\circ\gamma$ have the same length and are the minimizing geodesics between $G\pi(p_1)$ and $G\pi(p_2)$; $G\pi'(p_1)$ and $G\pi'(p_2);$ respectively. This verifies Theorem \ref{thm:password}.~\qedsymbol{}

\subsection{Lorentzian metric via $\star$-diagrams}

We now use the appropriate Riemannian metrics in $\star$-diagrams to easily promote Lorentzian metrics out of these. Later, we give explicit constructions to Lorentzian metrics on classical and exotic $7$-dimensional spheres.

Take $\ga'$ to be the $\bullet$-invariant metric on $M'$ obtained from Theorem \ref{thm:password}, and let $\ga'_{-r^2}$ its $-r^2$-Cheeger deformation for sufficiently large $r > 0$. Since we do not have any information on whether the orbits of the $\bullet$-action on $M'$ are totally geodesic, the intrinsic geometry of the fibers is decoded by a metric tensor $P'$. So any $r>0$ such that $1-r^2P' < 0$ makes $\ga_{-r^2}$ a semi-Riemannian metric on the principal stratum ${M'}^{reg}$ of $M'$: Recall that outside the principal stratum the orbit tensor may lose rank, so we cannot control the signature of the metric there. 

Observe, in addition, that there is a straightforward way of promoting a semi-Riemannian metric on $M$ for which the orbit spaces $M/G$ and $M'/G$ are still isometric. We claim that it suffices to consider the (same parameter) $-r^2$-Cheeger deformation of $\ga$, a $\star$-invariant metric from which $\ga'$ descends in Theorem \ref{thm:password}. This shall be guaranteed by the following proposition, which we only state and appears in \cite{SperancaCavenaghiPublished}. It is also essential that it shows how $\star$-diagrams encode exoticity through the bundle transitions since, locally, each \emph{local orbit type} is the same for both $M$ and $M'$.

More precisely, for each $x,y \in M$, Theorem 3.57 in \cite{alexandrino2015lie} shows the existence of well-defined $G$-invariant tubular neighborhoods centered on $Gx$ and $Gy$ so that: Denoting by $M^{\sim}_x$ the connected components of the partition on $M$ by orbits with the same type (that is, with conjugate isotropy subgroups) as $Gx,$ we say that any $y,z \in M^{\sim}_x$ has the same local orbit type if there is a $G$-equivariant map between some $G$-invariant tubular neighborhood $\mathrm{Tub}(Gx)$ and $\mathrm{Tub}(Gy)$.

\begin{proposition}[Proposition 5.3 in \cite{SperancaCavenaghiPublished}]\label{prop:isometric}
	Given $x\in M$, there is $x'\in \pi'(\pi^{-1}(x))$ and an isomorphism $\Phi\co \mathrm{Tub}(Gx)\to \mathrm{Tub}(Gx')$ satisfying: given $\cal O\subseteq \mathrm{Tub}(Gx)$, if  $\exp|_\cal O\co \cal O\to M$ is a diffeomorphism onto its image, then $\exp|_{\Phi(\cal O)}\co \Phi(\cal O)\to M'$ is a diffeomorphism onto its image. 
\end{proposition}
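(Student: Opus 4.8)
The plan is to leverage the $\star$-diagram structure and the fact that, by Theorem \ref{thm:howtobuild}(3), the $G\times G$-isotropy at any $p\in P$ under the juxtaposition action $(r,s)p = rps^{-1}$ takes the form $(G\times G)_p = \{(h, ghg^{-1}) : h\in G_{\pi(p)}\}$ for a suitable $g\in G$. First I would observe that this completely dictates the local orbit data on \emph{both} sides: the image of the projection to the first factor is $G_{\pi(p)}$ (the isotropy of the $\star$-action on $M$ after descent), while the projection to the second factor is the conjugate $gG_{\pi(p)}g^{-1}$, which is the isotropy of the $\bullet$-action on $M'$ at $x' = \pi'(p)$. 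So the two isotropy subgroups are conjugate in $G$, hence $Gx$ and $Gx'$ have the same orbit type, and $x'$ lies in $\pi'(\pi^{-1}(x))$ by construction of the diagram.

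Next, the heart of the argument is to produce the isomorphism $\Phi$ of tubular neighborhoods and to show it is compatible with the exponential maps. I would fix the $G\times G$-invariant Kaluza--Klein metric on $P$ from Proposition \ref{prop:oneform}, and use the slice theorem (Theorem 3.57 in \cite{alexandrino2015lie}) applied to the $(G\times G)$-action on $P$ at the orbit $(G\times G)p = \pi^{-1}(Gx) = (\pi')^{-1}(Gx')$. This yields a single $(G\times G)$-invariant tubular neighborhood $\mathcal{N}$ of that orbit in $P$. Projecting $\mathcal{N}$ by $\pi$ gives a $G$-invariant $\Tub(Gx)\subset M$, and projecting by $\pi'$ gives $\Tub(Gx')\subset M'$. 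The crucial point is that both $\pi|_{\mathcal N}$ and $\pi'|_{\mathcal N}$ restrict to Riemannian submersions whose horizontal distribution is the common $\mathcal{H}''$, so along horizontal geodesics in $P$ emanating from $\mathcal H''_p$ the two projections produce the \emph{same} normal-exponential picture. Concretely, I would define $\Phi := \pi'|_{\mathcal N} \circ (\pi|_{\mathcal N})^{-1}$ after restricting to a slice, and note that because $\mathcal H''$ is carried isometrically onto both $\mathcal H\subset TM$ and $\mathcal H'\subset TM'$ (as already recorded in the paragraph following Theorem \ref{thm:password}), $\Phi$ intertwines the $G$-actions and the respective metric exponential maps up to the fiber directions.

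The final step is the statement about $\exp|_{\mathcal O}$: given $\mathcal O\subseteq \Tub(Gx)$ on which $\exp$ is a diffeomorphism onto its image, one wants $\exp|_{\Phi(\mathcal O)}$ to be a diffeomorphism onto its image. I would lift $\mathcal O$ to $\widetilde{\mathcal O}\subseteq \mathcal N$ via $(\pi|_{\mathcal N})^{-1}$, run the exponential map of the Kaluza--Klein metric on $P$ restricted to $\mathcal H''$-directions, and then push down by $\pi'$. Since $\pi$ and $\pi'$ are Riemannian submersions with the same horizontal space, horizontal geodesics in $P$ project to geodesics in $M$ and $M'$ respectively, and injectivity/regularity of $\exp$ on $\mathcal O\subset M$ transfers to $\Phi(\mathcal O)\subset M'$ because the two projections differ only by the (fixed) fiber identification over the orbit. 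The main obstacle I anticipate is bookkeeping the slice directions versus the orbit directions carefully: the tubular neighborhood in $P$ has extra orbit directions relative to those in $M$ and $M'$, and one must check that $\Phi$ is well-defined (independent of the choice of preimage in $\mathcal N$) and genuinely equivariant, which is exactly where the conjugacy relation $(G\times G)_p = \{(h,ghg^{-1})\}$ from Theorem \ref{thm:howtobuild}(3) does the work — without it, $\Phi$ would only be defined up to the ambiguity in choosing $g$. Once well-definedness is secured, the diffeomorphism-transfer claim is essentially a formal consequence of $\pi,\pi'$ being Riemannian submersions sharing $\mathcal H''$.
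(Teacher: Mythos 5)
First, be aware that the paper does \emph{not} prove this proposition: the text immediately preceding it reads ``which we only state and appears in \cite{SperancaCavenaghiPublished},'' so there is no in-paper argument to compare against, and your sketch stands on its own.

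Your conceptual framework is the natural one and lines up with the machinery the paper sets up: the $G\times G$-slice at $(G\times G)p = \pi^{-1}(Gx) = (\pi')^{-1}(Gx')$, the shared horizontal distribution $\mathcal H''$, and the conjugacy relation $(G\times G)_p = \{(h, ghg^{-1}) : h\in G_{\pi(p)}\}$ from Theorem~\ref{thm:howtobuild}(3). But there is a genuine gap exactly where you flag one: the map $\Phi := \pi'|_{\mathcal N}\circ(\pi|_{\mathcal N})^{-1}$ is not well-defined as written, because $(\pi|_{\mathcal N})^{-1}(y)$ is an entire $\bullet$-orbit and $\pi'$ is $\bullet$-\emph{equivariant}, not $\bullet$-invariant, so different choices of preimage land at different points of $M'$. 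Saying ``restricting to a slice'' gestures at the fix but does not carry it out, and the equivariance of the result is precisely the nontrivial point. What you need is the twisted-product model: fix $p$ with $\pi(p)=x$, $x':=\pi'(p)$, identify $\Tub(Gx)\cong G\times_{G_x}D$ and $\Tub(Gx')\cong G\times_{G_{x'}}D'$ with $D\subset\mathcal H_x$, $D'\subset\mathcal H'_{x'}$ slice disks, observe that $(d\pi|_{\mathcal H''_p})^{-1}$ followed by $d\pi'|_{\mathcal H''_p}$ is a linear isomorphism $D\to D'$ intertwining the $G_x$-slice representation with the $G_{x'}=gG_xg^{-1}$-slice representation via $h\mapsto ghg^{-1}$ (this is exactly what the description of $(G\times G)_p$ buys you), and then check that $(r,v)\mapsto(rg^{-1},\,d\pi'\circ(d\pi)^{-1}v)$ descends to a $G$-equivariant diffeomorphism of twisted products. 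Until that computation is done, ``$\Phi$ is well-defined and equivariant'' is an assertion, not a proof. Likewise, the exponential-transfer step is not ``essentially formal'': if $\exp_{M'}(\Phi v_1)=\exp_{M'}(\Phi v_2)$, lifting to $\mathcal H''$ only shows $\exp_P(\tilde v_1)$ and $\exp_P(\tilde v_2)$ lie on the same $\star$-orbit; you then need the $G$-equivariance of $\Phi$ together with the slice structure (and, implicitly, $G$-invariance of $\mathcal O$ or a reduction to the slice) to close the injectivity argument. None of this is unfixable — your outline points at the right ingredients — but as written the two central steps (well-definedness/equivariance of $\Phi$, and injectivity transfer) are stated rather than proved.
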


Since Cheeger deformations preserve horizontal distributions, we have:
\begin{theorem}
    Let $M'\leftarrow P \rightarrow M$ be a star diagram with $\mathrm{S}^1$ as its structure group. Then there is a Lorentzian metric $\ga_{-r^2}$ in the principal stratum of $M^{reg}$ of the $\star$-action $M$, and a $\ga'_{-r^2}$ Lorentzian metric in the principal stratum ${M'}^{reg}$ of the $\bullet$-action on $M'$ such that the orbit spaces $M/G$ and $M'/G$ are isometric.
\end{theorem}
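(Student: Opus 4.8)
The plan is to package the already-established pieces and only verify the one genuinely new point, namely that the $-r^2$-Cheeger deformation really does produce a Lorentzian (index-$1$), not merely higher-index, metric on each regular stratum. First I would recall the setup: fix a $G\times G$-invariant Kaluza--Klein metric on $P$ as in Proposition \ref{prop:oneform}, let $\ga$ be the $\star$-invariant Riemannian metric on $M$ and $\ga'$ the $\bullet$-invariant metric on $M'$ induced by the two projections, so that by Theorem \ref{thm:password} the orbit spaces $M/G$ and $M'/G$ are isometric as metric spaces. Since $G=\mathrm{S}^1$, on the principal stratum $M^{reg}$ the $\star$-action is free, so the orbit tensor $P$ (a positive scalar function along $M^{reg}$, as $\lie g$ is one-dimensional) is everywhere positive, and likewise $P'$ on ${M'}^{reg}$; by compactness of $P$ and continuity there is a uniform upper bound, say $P,P'\le c_0$ on the regular strata. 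Choose any $r$ with $r^2>c_0$ (equivalently $r>1$ after the totally-geodesic normalization of Remark \ref{rem:importante}, where $P=1$), and form the $-r^2$-Cheeger deformations $\ga_{-r^2}$, $\ga'_{-r^2}$ using Proposition \ref{propauxiliar}.

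Next I would check the signature. By Proposition \ref{propauxiliar}(2), writing a tangent vector as $\overline X = X + U^{*}$ with $X$ horizontal and $U^{*}$ vertical, the deformed metric satisfies $\ga_{-r^2}(\overline X,\overline X) = \ga(X,X) + \ga_{-r^2}(U^{*},U^{*})$ with $\ga_{-r^2}(U^{*},U^{*}) = Q(P_{-r^2}U,U)$ and $P_{-r^2} = (P^{-1}-r^2)^{-1}$. On $M^{reg}$ the horizontal part is definite (inherited from $\ga$) of rank $n-1$, while the vertical part is one-dimensional with $P^{-1}-r^2 < 0$, hence $\ga_{-r^2}(U^{*},U^{*})<0$ for $U^{*}\neq 0$; this gives index exactly $1$, i.e. $\ga_{-r^2}$ is Lorentzian on $M^{reg}$, and the identical argument applies to $\ga'_{-r^2}$ on ${M'}^{reg}$. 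One should note that because Cheeger deformations preserve the horizontal distribution (so the induced metric on the horizontal bundle is unchanged) and because, by Proposition \ref{prop:isometric}, the local orbit types — and the tubular-neighborhood structure near each orbit — agree for $M$ and $M'$, the two regular strata are genuinely parallel: the horizontal geometry descends to the \emph{same} metric on the orbit space in both cases.

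Then I would argue that the orbit-space isometry persists under the deformation. The point is that Cheeger deformation only rescales the metric in the vertical (orbit) directions and leaves the horizontal part fixed; the orbit space $M^{reg}/G$ carries exactly the metric induced by the unchanged horizontal distribution, so $M^{reg}/\mathrm{S}^1$ with the metric coming from $\ga_{-r^2}$ is isometric to $M^{reg}/\mathrm{S}^1$ with the metric from $\ga$, and similarly on the $M'$ side. Composing with the isometry $M/G\cong M'/G$ from Theorem \ref{thm:password} (restricted to the regular strata, which correspond under the bijection $i\co M/G\to M'/G$ since $\pi^{-1}(Gx)=(G\times G)p={\pi'}^{-1}(x')$ matches local orbit types by Proposition \ref{prop:isometric}) yields the desired isometry of the two orbit spaces. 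I expect the main obstacle to be bookkeeping rather than conceptual: one must be careful that the ``principal stratum'' is defined consistently on both sides and that the isometry of Theorem \ref{thm:password}, stated for the full orbit spaces as metric spaces, restricts to an isometry of the open dense regular parts — this follows from Proposition \ref{prop:isometric}, but it requires spelling out that $x\in M^{reg}$ iff $x'\in {M'}^{reg}$ under the correspondence. The curvature/smooth-extension assertions of Theorem A are deliberately deferred to the later Theorems \ref{thm:einsteingeneral} and \ref{thm:curvaturasec}, so this proof stops once Lorentzian signature and orbit-space isometry on the regular strata are in hand.
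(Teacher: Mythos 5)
Your approach is essentially the same as the paper's, which rests on Theorem \ref{thm:password}, Proposition \ref{prop:isometric}, and the single observation that Cheeger deformations leave the horizontal distribution and the horizontal metric unchanged (so the induced orbit-space metric is unaltered and the isometry $M/G\cong M'/G$ persists); your fleshed-out signature computation via Proposition \ref{propauxiliar} is the right substance. However, the compactness step is stated the wrong way around. Writing $P_{-r^2}=(P^{-1}-r^2)^{-1}$, negativity of the vertical block requires $P^{-1}<r^2$, that is a uniform \emph{lower} bound on the orbit tensor $P$ (equivalently an upper bound on $P^{-1}$), yet you assert a uniform \emph{upper} bound $P,P'\le c_0$ and take $r^2>c_0$, which does not yield $P^{-1}<r^2$. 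Moreover even the corrected requirement does not follow from compactness of the total space: $M^{reg}$ is an \emph{open} set, and when its closure contains singular orbits or fixed points (as it does in the paper's $\mathrm{S}^7$ and Gromoll--Meyer examples) the orbit tensor degenerates, $P\to 0$, so $P^{-1}$ is unbounded on $M^{reg}$ and no single $r$ works by this route. What actually carries the argument is precisely your parenthetical: invoke the Searle--Solórzano--Wilhelm normalization of Remark \ref{rem:importante}(b) to arrange $P\equiv 1$ on the (free part of the) regular stratum while preserving the horizontal distribution, after which $r>1$ suffices uniformly. Lead with that and drop the compactness argument, which is both in the wrong direction and unavailable.
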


We finally proceed with explicit constructions.

\subsection{Explicit (almost) Lorentzian metrics on homotopy spheres}

\subsubsection{The $(\mathrm{S}^1)-\star$-action on $\mathrm{S}^7$}
 
	Consider the $\star$ action on $\mathrm{S}^7$. It is defined by the $q$-conjugation on each factor of $(a,b) \in \bb H\oplus \bb H$ satisfying $|a|^2 + |b|^2 = 1$. It is straightforward to check that the vertical space at a point $(a,b) \in \mathrm{S}^7 \subset \mathbb{H}\oplus\mathbb{H}$ is given by
	\begin{equation}
	    \cal V_{(a,b)} = \left\{\begin{pmatrix}[V,a]\\ [V,b] \end{pmatrix},~V \in \lie {\mathbf{i}\bb R}\right\},
	\end{equation}
	\begin{equation}
	    [V, a]:= V a - a V.
	\end{equation}
	
We take $\begin{pmatrix}Xa\\ Yb \end{pmatrix}$ to be orthogonal to the $\star$-orbit through $(a,b).$ Throughout this section  we assume that $\langle\cdot,\cdot\rangle_{\mathrm{S}^7}$ is the restriction of the metric of $\bb H\oplus \bb H$ to $\mathrm{S}^7$. Therefore, $(Xa,Yb)^{T}\in T_{(a,b)}\mathrm{S}^7$ if, and only if, $\mathrm{Re}(|a|^2X + |b|^2Y) = 0$. We prove:
\begin{lemma}\label{lem:horbu}
The vector $\begin{pmatrix}Xa\\ Yb \end{pmatrix}\in T_{(a,b)}\mathrm{\mathrm{S}^7}$ satisfy the equation
\begin{equation}
    \left(|a|^2 - C_{\overline a}\right)X + \left(|b|^2 - C_{\overline b}\right)Y = 0,
\end{equation}
where $C_{\overline a}X := \overline aX a$ and $C_{\overline b}$ is defined similarly. 
\end{lemma}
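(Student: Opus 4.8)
The statement to prove is a clean linear-algebra identity: the vector $(Xa, Yb)^T$ lies in $T_{(a,b)}\mathrm{S}^7$ means $\mathrm{Re}(|a|^2 X + |b|^2 Y) = 0$ (already recorded above), and the claim to establish is that the $\star$-horizontal condition on such a vector is equivalent to the quaternionic equation $(|a|^2 - C_{\overline a})X + (|b|^2 - C_{\overline b})Y = 0$. The natural approach is a direct computation: write down what it means for $(Xa, Yb)^T$ to be $\langle\cdot,\cdot\rangle_{\mathrm{S}^7}$-orthogonal to the vertical space $\cal V_{(a,b)} = \{([V,a],[V,b])^T : V \in \mathbf{i}\bb R\}$, and simplify the resulting scalar condition $\mathrm{Re}\big(\langle [V,a], Xa\rangle_{\bb H} + \langle [V,b], Yb\rangle_{\bb H}\big) = 0$ into the asserted form. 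Since $\langle\cdot,\cdot\rangle_{\bb H}$ is just $\mathrm{Re}(p\overline q)$, this unwinds to an expression that must vanish for every $V = \mathbf{i}t$, $t\in\bb R$.

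\textbf{Key steps, in order.} First I would expand $\langle [V,a], Xa\rangle_{\bb H} = \mathrm{Re}\big((Va - aV)\overline{Xa}\big) = \mathrm{Re}\big((Va - aV)\overline a\,\overline X\big)$, and similarly for the $b$-term, so that the orthogonality condition becomes
\[
\mathrm{Re}\Big( (Va - aV)\overline a\,\overline X + (Vb - bV)\overline b\,\overline Y \Big) = 0 \quad \text{for all } V \in \mathbf{i}\bb R.
\]
Second, I would use $a\overline a = |a|^2$ (a real scalar) to rewrite $(Va)\overline a = |a|^2 V$ and $(aV)\overline a = a V \overline a$, and then invoke the cyclic-type property of $\mathrm{Re}$ (namely $\mathrm{Re}(pq) = \mathrm{Re}(qp)$) to move factors around: for instance $\mathrm{Re}(aV\overline a\,\overline X) = \mathrm{Re}(V\overline a\,\overline X a) = \mathrm{Re}(V\,\overline{\overline a X a}) $ — here using $\overline{\overline a X a} = \overline a \,\overline X\, a$ — so that the $aVa$-type term collects into $\mathrm{Re}(V \cdot \overline{C_{\overline a}X})$ up to conjugation bookkeeping. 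Third, after collecting, the condition reads $\mathrm{Re}\big(V\,\overline{W}\big) = 0$ for all $V\in \mathbf{i}\bb R$, where $W = (|a|^2 - C_{\overline a})X + (|b|^2 - C_{\overline b})Y$ (or its conjugate/real-imaginary rearrangement); since $V$ ranges over the full real line times $\mathbf{i}$, this forces the $\mathbf{i}$-component of $W$ to vanish. Fourth — and this is the only genuinely substantive point — one must check that the remaining components of $W$ are automatically zero given the tangency constraint $\mathrm{Re}(|a|^2 X + |b|^2 Y) = 0$, or else argue that the relevant $W$ is valued in $\mathbf{i}\bb R$ to begin with, so that vanishing of its $\mathbf{i}$-part kills all of it. Concretely, $C_{\overline a}X = \overline a X a$ has real part $\mathrm{Re}(\overline a X a) = \mathrm{Re}(X a\overline a) = |a|^2\mathrm{Re}(X)$, so $(|a|^2 - C_{\overline a})X$ has real part $|a|^2\mathrm{Re}(X) - |a|^2\mathrm{Re}(X) = 0$; adding the $b$-term, $W$ has zero real part identically, and the $\mathbf{j}$- and $\mathbf{k}$-parts need to be handled by noting that testing against $V = \mathbf{i}t$ only detects the $\mathbf{i}$-part — so in fact one should test against all of $\cal V$, but $\cal V$ is only one-dimensional here (spanned by $[\mathbf{i},a], [\mathbf{i},b]$), which means the lemma's equation as stated must already be the $\mathbf{i}$-component equation in disguise, i.e. the full quaternionic equation $(|a|^2 - C_{\overline a})X + (|b|^2 - C_{\overline b})Y = 0$ should be read as: its $\mathbf{i}$-part vanishes, which combined with the structural observation that the expression has no real part and (one checks) lies in $\mathbf{i}\bb R$ when $X\mathbf{i}, Y\mathbf{i}$ are suitably restricted, gives the clean form.

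\textbf{Main obstacle.} The real work is the conjugation and $\mathrm{Re}$-cyclicity bookkeeping in quaternions — keeping track of which side multiplications land on, and correctly using $\overline{pq} = \overline q\,\overline p$ together with $\mathrm{Re}(pq) = \mathrm{Re}(qp)$ — to show the orthogonality condition collapses exactly to $(|a|^2 - C_{\overline a})X + (|b|^2 - C_{\overline b})Y = 0$ rather than to some superficially different but equivalent expression. I expect no conceptual difficulty: the vertical space is one-dimensional and everything is an explicit finite computation. The one place to be careful is the interplay between this horizontal equation and the tangency constraint $\mathrm{Re}(|a|^2 X + |b|^2 Y)=0$ — one should verify they are compatible and together cut out the expected-dimensional horizontal space (dimension $6$, since $\dim\mathrm{S}^7 = 7$ and the $\star$-orbits are $1$-dimensional on the principal stratum), which serves as a sanity check that the derived equation is the right one.
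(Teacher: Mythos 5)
Your approach is essentially the paper's: expand the orthogonality condition $\bigl\langle (Xa,Yb)^T,\,([V,a],[V,b])^T\bigr\rangle = 0$, use $\mathrm{Re}$-cyclicity and $a\overline a = |a|^2$ to collapse it to $\mathrm{Re}\bigl(W\,\overline V\bigr) = 0$ with $W = (|a|^2 - C_{\overline a})X + (|b|^2 - C_{\overline b})Y$, and then conclude. The paper's own proof ends with ``the last equality must hold for every $V$, hence $W = 0$,'' and you have correctly put your finger on the problem with that step: as the vertical space is written in the paper, $V$ ranges only over $\mathbf{i}\bb R$, so ``for every $V$'' yields a single real equation (the $\mathbf{i}$-component of $W$ vanishes), not the full quaternionic equality $W = 0$.

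Two comments. First, your observation is a genuine one: $W = 0$ does not follow from orthogonality to a one-dimensional vertical space, and the $\mathbf{j}$- and $\mathbf{k}$-components of $W$ are \emph{not} automatically zero. A concrete check: at $(a,b) = (\mathbf{j}/\sqrt 2,\,1/\sqrt 2)$ with $X = \mathbf{k}$, $Y = 0$, the vector $(Xa, Yb)$ is tangent and orthogonal to the $\mathrm{S}^1$-orbit, yet $W = \mathbf{k}\neq 0$. So your suggestion in the last paragraph --- that ``$W$ is valued in $\mathbf{i}\bb R$ to begin with'' --- is not correct and should be dropped; $W$ has vanishing real part automatically (as you rightly compute), but its $\mathbf{j},\mathbf{k}$-parts are genuinely unconstrained by $\mathrm{S}^1$-horizontality. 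Second, the resolution is one of the two alternatives you float: either the lemma is meant to be read componentwise (only the $\mathbf{i}$-part of $W$ vanishes, consistent with the scalar right-hand side $tr^{-2}$ in the twisted horizontal-space description that follows, and with the explicit $\mathrm{Re}(\cdots) = -r^{-2}t$ form used in the $\mathrm{SU}(2)$ case), or the lemma is really stated for the full $\mathrm{SU}(2)$ $\star$-action on $\mathrm{S}^7$, with $V$ ranging over $\mathrm{Im}\,\bb H$, in which case ``for every $V$'' does give three independent equations, the real part of $W$ vanishes for free, and the quaternionic equation $W = 0$ follows. Under the paper's stated $V\in\mathbf{i}\bb R$ it does not; so your proof attempt is faithful to the paper's method and correctly flags that the last inference needs one of these two repairs.
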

\begin{proof}
Recall that the Riemannian metric at $(a,b)$ is the sum of quaternionic inner products on $\mathbb{H}\oplus \mathbb{H}$. Therefore, one has:
\begin{align*}
   0 =  \left\langle \begin{pmatrix}Xa\\ Yb \end{pmatrix}, \begin{pmatrix} [V,a]\\ [V,b]\end{pmatrix}\right\rangle &= \mathrm{Re}\left(C_{\overline a}(\overline VX) - \overline VC_{\overline a}(X) + C_{\overline b}(\overline VY) - \overline VC_{\overline b}(Y)\right)\\
   &= \mathrm{Re}\left(|a|^2\overline V X - C_{a}(\overline V)X + |b|^2 \overline VY - C_{b}(\overline V)b\right)\\
   &= \mathrm{Re} \left(\left(\left(|a|^2 - C_{a}\right)\overline V\right)X + \left(\left(|b|^2 - C_{b}\right)\overline V\right)Y\right)\\
   &= \left \langle \left(|a|^2 - C_{\overline a}\right)X + \left(|b|^2 - C_{\overline b}\right)Y,V\right\rangle.\label{eq:deomega}
\end{align*}

One concludes the proof since the last equality must hold for every $V$. \qedhere
\end{proof}

We now consider the $\mathrm{S}^1$-twisted action on $\mathrm{S}^7\times \mathrm{S}^1$ defined as
\begin{equation}\label{eq:twists7}
q\ast ((a,b)^{T},q') := ((qa\overline q,qb\overline q)^{T},qq'),
\end{equation}
where $(a,b)^{T} \equiv \begin{pmatrix}
a\\
b
\end{pmatrix}.
$

In complete analogy to Lemma \ref{lem:horbu} one has that the horizontal space at $((a,b),q')$ with respect to the action \eqref{eq:twists7}, computed in the metric

\begin{equation*}
\widetilde{\ga}_{-r^{-2}} := \langle\cdot,\cdot\rangle_{\mathrm{\mathrm{S}^7}} -r^{-2}d\theta^2,
\end{equation*}
 is given by
 \begin{equation*}
\cal H_{((a,b),q')} \cong \mathrm{span}_{\bb R}\left\{((Xa,Yb)^{T},iq't) \in T_{(a,b)}\mathrm{S}^7\oplus T_{q'}\mathrm{S}^1 : (|a|^2-C_{\overline a})X + (|b|^2-C_{\overline b})Y = tr^{-2}\right\}.
 \end{equation*}

 Therefore, once more identifying $\partial_{\theta} \leftrightarrow \mathbf{i}$ one gets the following description of the tangent space of $\mathrm{S}^7$ at $(a,b)$
 \begin{equation*}
T_{(a,b)}\mathrm{S}^7 \cong \mathrm{span}_{\bb R}\left\{t\partial_{\theta}, (Xa,Yb)^{T} : (Xa,Yb) \in T_{(a,b)}\mathrm{S}^7, (|a|^2-C_{\overline a})X + (|b|^2-C_{\overline b})Y = tr^{-2} \right\},
 \end{equation*}
 with the following semi-Riemannian metric:
 \begin{equation*}
\ga_{-r^{2}}(t\partial_{\theta} + (Xa,Yb)^{T},t'\partial_{\theta} + (X'a,Y'b)^{T}) = \langle X,X'\rangle_{\bb H} + \langle Y,Y'\rangle_{\bb H} - r^{-2}tt'.
 \end{equation*}

\subsubsection{The $(\mathrm{S}^1)-\bullet$-action on $\Sigma^7_{GM}$}
	
	Consider the $\bullet$ action on $\Sigma_{GM}^7$. It is given by the conjugation by $q\in \mathrm{S}^1$ only in the first factor $(2\bar cd,|c|^2-|d|^2)$. Hence, the vertical space at a point $(2\overline cd,|c|^2-|d|^2) \in \Sigma^7 \subset \mathbb{H}\oplus\mathbb{H}$ is given by
	\begin{equation}
	    \cal V_{(2\overline cd,|c|^2-|d|^2)} = \mathrm{span}_{\bb R}\left\{\begin{pmatrix}2\mathbf{i}\overline cd - 2\overline cd\mathbf{i}\\ 0\end{pmatrix}\right\}.
	\end{equation}
	
Take $\begin{pmatrix} \h X\overline cd\\ Y \end{pmatrix}\in T_{(2\overline cd,|c|^2-|d|^2)}\Sigma_{GM}^7$ to be $\widetilde{\ga}_{-r^{2}}$-orthogonal to the $\star$-orbit through $(2\overline cd,|c|^2-|d|^2)$, where
\[\widetilde{\ga}_{-r^{2}} = \langle\cdot,\cdot\rangle_{\bb H\oplus \bb H} -r^{2}d\theta^2.\]
Observe that $(\h X\overline cd,Y) \in T_{(2\overline cd,|c|^2-|d|^2)}$ if, and only if,
\[|\bar c d|^2\mathrm{Re}(X) = -\mathrm{Re}(Y) (|c|^2-|d|^2).\] Moreover, copying the proof of Lemma \ref{lem:horbu} we can check that
\begin{lemma}
The vector $\begin{pmatrix}\h X\overline cd\\ Y \end{pmatrix}$ satisfy the equation
\[|\bar cd|^2 X- C_{\bar cd}X = 0,\]
where $C_{\bar cd}$ is the conjugation by $\bar c d$.
\end{lemma}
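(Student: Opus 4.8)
The plan is to mimic exactly the computation of Lemma \ref{lem:horbu}, since the $\bullet$-action on $\Sigma_{GM}^7$ acts by $q$-conjugation only on the first coordinate $2\bar cd$ while leaving $Y$ (the tangent part in the $|c|^2-|d|^2$ direction) untouched. First I would write down the orthogonality condition: the vector $\begin{pmatrix}\h X\overline cd\\ Y\end{pmatrix}$ must be $\langle\cdot,\cdot\rangle_{\bb H\oplus\bb H}$-orthogonal to the vertical generator $\begin{pmatrix}2\mathbf i\overline cd - 2\overline cd\mathbf i\\ 0\end{pmatrix}$. Because the second coordinate of the vertical vector vanishes, the $Y$-term drops out entirely, and the condition reduces to $\mathrm{Re}\big((2\mathbf i\overline cd - 2\overline cd\mathbf i)\overline{\tfrac12 X\overline cd}\big)=0$, i.e. $\mathrm{Re}\big((\mathbf i\overline cd - \overline cd\,\mathbf i)\overline{X\overline cd}\big)=0$.

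Next I would expand this using the standard manipulations with $\mathrm{Re}(\cdot)$ and quaternionic conjugation, exactly as in the displayed chain of equalities in Lemma \ref{lem:horbu}. Writing $w := \overline cd$, the expression becomes $\mathrm{Re}\big((\mathbf i w - w\mathbf i)\overline w\,\overline X\big) = \mathrm{Re}\big(\mathbf i w\overline w\,\overline X - w\mathbf i\overline w\,\overline X\big) = \mathrm{Re}\big(|w|^2\mathbf i\,\overline X - C_{w}(\mathbf i)\overline X\big) = \mathrm{Re}\big(\big((|w|^2 - C_{w})\mathbf i\big)\overline X\big)$, using $w\overline w = |w|^2$ and the notation $C_{w}(\mathbf i) = w\,\mathbf i\,\overline w$ (noting $\overline{w}$ plays the role of $\bar a$ in the lemma, up to the same conjugation bookkeeping). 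Since $\mathrm{Re}(\xi\overline X) = \langle X,\xi\rangle$ is the real inner product, this is $\big\langle X,\,(|w|^2 - C_{\overline w})\mathbf i\big\rangle$ where I have absorbed the conjugation into $C_{\overline w} = C_{\overline cd}$ to match the statement; vanishing for the single vertical direction generated by $\mathbf i$ is then equivalent to $\big(|w|^2 - C_{\overline w}\big)X$ being orthogonal to $\mathbf i\bb R$ after dualizing — but in fact, paralleling the lemma, the cleaner route is to observe that the vertical space here is one-dimensional, spanned by the image of $\mathbf i$, and the orthogonality of the horizontal candidate to it, combined with the already-imposed tangency constraint, forces $|\bar cd|^2 X - C_{\bar cd}X = 0$.

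The only genuine subtlety — and the step I expect to be the main obstacle — is bookkeeping the placement of conjugations: the $\bullet$-action is conjugation $w\mapsto qw\overline q$, so the relevant adjoint operator on the horizontal slot is $C_{\overline cd}$ rather than $C_{cd}$, and one must track carefully, as in Lemma \ref{lem:horbu}, whether the factor $\overline cd$ or $d\overline c$ appears and on which side, since the parametrization $\tfrac12 X\overline cd$ was chosen precisely so that these collapse nicely. Concretely I would: (i) substitute the parametrized tangent vector into the inner product with the vertical generator; (ii) move all factors of $\overline cd$ and its conjugate to one side using $\mathrm{Re}(\alpha\beta) = \mathrm{Re}(\beta\alpha)$; (iii) read off that the coefficient of the arbitrary $\overline X$ must vanish; (iv) rewrite the resulting identity as $\big(|\bar cd|^2 - C_{\bar cd}\big)X = 0$, which is the claimed equation. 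The $Y$-coordinate, as noted, imposes no further constraint from this particular orthogonality because the vertical vector has no component in its direction — the only constraint involving $Y$ is the tangency condition $|\bar cd|^2\mathrm{Re}(X) = -\mathrm{Re}(Y)(|c|^2-|d|^2)$ already recorded before the lemma.
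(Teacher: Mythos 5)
Your proposal follows exactly the route the paper itself indicates (``copying the proof of Lemma~\ref{lem:horbu}''), and the core observations are right: the vertical generator has zero second component, so $Y$ drops out of the orthogonality condition, and the remaining computation is the Lemma~\ref{lem:horbu} calculation with $w=\overline cd$ in the role of $a$. The derivation up to $\mathrm{Re}\big((|w|^2 - C_w)(\mathbf i)\,\overline X\big)=0$ and the cyclic move to transfer the operator onto $X$ are sound.

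The one point to flag is the final bookkeeping step, where you write ``$C_{\overline w}=C_{\overline cd}$ to match the statement.'' With $w=\overline cd$ the conjugate is $\overline w=\overline dc$, so the operator that the cyclic rearrangement actually produces is $C_{\overline dc}$ (that is, $X\mapsto \overline dc\, X\, \overline cd$), \emph{not} $C_{\overline cd}$. This is precisely what one gets by running the Lemma~\ref{lem:horbu} argument verbatim, since there the slot quaternion is $a$ and the operator that emerges is $C_{\overline a}$; here the slot quaternion is $\overline cd$, so the operator should be $C_{\overline{\overline cd}}=C_{\overline dc}$. Your derivation is therefore internally consistent, and the mismatch is with the statement itself, which appears to carry a conjugation typo. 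It is also worth keeping in mind, as in Lemma~\ref{lem:horbu}, that because the vertical space is one-dimensional ($V\in\mathbf i\mathbb R$), the orthogonality relation is a single scalar equation $\langle |\overline cd|^2X-C_{\overline dc}X,\,\mathbf i\rangle=0$ rather than the vanishing of the full quaternion; the paper uses the same shorthand in Lemma~\ref{lem:horbu} and in the subsequent description of $\mathcal H_{((a,b),q')}$, so your reading is consistent with the paper's conventions.
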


We now consider the $\mathrm{S}^1$-twisted action on $\Sigma^7_{GM}\times \mathrm{S}^1$ defined as
\begin{equation}\label{eq:twistsigma7}
q\ast ((2\overline cd,|c|^2-|d|^2)^{T},q') := ((q\overline cd\overline q,|c|^2-|d|^2)^{T},qq'),
\end{equation}
where $(2\overline cd,|c|^2-|d|^2)^{T} \equiv \begin{pmatrix}
2\overline cd\\
|c|^2-|d|^2
\end{pmatrix}.
$

Therefore, it is straightforward to check that the horizontal for $\ast$ is given by
\begin{equation*}
\cal H_{((\bar cd,|c|^2-|d|^2),q')}\cong \left\{\left(\left(-\h X\overline cdi,Y\right),itq'\right) : |\bar c d|^2X + C_{\bar cd}X = r^{-2}t, ~|\bar c d|^2\mathrm{Re}(X) = -\mathrm{Re}(Y)(|c|^2-|d|^2) \right\}.
\end{equation*}
Hence, any horizontal vector has the norm
\begin{equation*}
\left|\left(\left(-\h X\overline cd\mathbf{i},Y\right),\mathbf{i}tq'\right)\right|^2_{\widetilde{\ga}_{-r^{2}}} = \frac{1}{4}|X|^2|c|^2|d|^2 + |Y|^2 -t^2r^{-2}
\end{equation*}
and so the Lorentzian metric induced on $\Sigma_{GM}^7$ can be defined as
\begin{equation}
    \ga_{-r^2}(-\h X\overline cd\mathbf{i} + Y,-\h X'\overline cd\mathbf{i} +Y') := \frac{1}{4}|c|^2|d|^2\langle X,Y\rangle_{\bb H} + \langle X',Y'\rangle_{\bb H} - t^2r^{-2}.
\end{equation}

Observe that the $\mathrm{S}^1$-action constructed in $\mathrm{S}^7$ has some fixed points, the same holding for the $\mathrm{S}^1$-action built on $\Sigma_{GM}^7$. In this manner, just as in the case of the $\mathrm{S}^1$-action constructed in $\mathrm{SU}(2)$, these metrics present time singularities, which we emphasize once more, could be interpreted as restricted world perspectives of observers living in $\mathrm{S}^7\times \bb R$ or $\Sigma_{GM}^7\times \bb R$, the corresponding universal coverings of $\mathrm{S}^7\times \mathrm{S}^1$ and $\Sigma_{GM}^7\times \mathrm{S}^1$. In the next section, we provide a better study of such singularities besides furnishing a classification.

\section{Fixed points singularities and Ricci curvature}
\label{sec:fixing}
\begin{lemma}\label{lem:fixedpointssu2}
Let $\mathrm{S}^7$ with the $\mathrm{S}^1$-action defined by equation \eqref{eq:GMstaraction}. Then, the only fixed points $(a,b)\in \mathrm{S}^7$ by the $\mathrm{S}^1$-action are \[(a,b)\in \mathrm{S}^7\cap \left(\bb R\times \bb R\right). \]
Moreover, for each $(a,b) \in \mathrm{S}^7$ such that $a\in \mathrm{S}^1, b = 0$ or $b\in \mathrm{S}^1, a = 0$, we have that $\mathrm{S}^1_{(a,b)} = \{e,(a,b)\}\cong \mathbb{Z}_2$ and out of these points the isotropy subgroups are trivial.
\end{lemma}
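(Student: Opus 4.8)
The plan is to unwind the definition of the $\mathrm{S}^1$-action given in \eqref{eq:GMstaraction} restricted to $\mathrm{S}^7$, i.e.\ the action $q\cdot(a,b) = (qa\overline q, qb\overline q)$ for $q\in \mathrm{S}^1 = \{e^{\mathbf i\theta}\}$, and to translate the fixed-point and isotropy conditions into statements about quaternion conjugation. First I would recall the elementary fact that for $q = e^{\mathbf i\theta}$ and $v\in\bb H$, the conjugate $qv\overline q$ equals $v$ for all $q\in\mathrm{S}^1$ if and only if $v$ commutes with $\mathbf i$, which happens precisely when $v\in\mathrm{span}_{\bb R}\{1,\mathbf i\} = \bb C$; and more is true, $qv\overline q = v$ for a single nontrivial $q = e^{\mathbf i\theta_0}$ with $\theta_0\notin\pi\bb Z$ already forces $v\in\bb C$, whereas $q = -1$ fixes $v$ iff $v\in\bb C$ as well since $(-1)v(-1) = v$ trivially — so I must be careful and instead note that $-1\in\mathrm{S}^1$ acts trivially on all of $\bb H$, hence I should record that the effective part of the circle acts with the stated behavior. (I would double-check against the paper's conventions here: the claim $\mathrm{S}^1_{(a,b)}\cong\bb Z_2$ for the special points suggests they are treating the circle so that $-1$ is \emph{not} acting trivially, which means the action is really by $q\mapsto$ conjugation twisted somehow, or that the relevant points have $a$ or $b$ anticommuting with $\mathbf i$; I would reconcile this before writing the final proof.)

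Granting the linear-algebra lemma above in the form I ultimately need, the fixed-point computation is immediate: $(a,b)$ is fixed by the whole circle iff both $a$ and $b$ are fixed by conjugation by every $e^{\mathbf i\theta}$, iff $a,b\in\bb C = \mathrm{span}_{\bb R}\{1,\mathbf i\}$. To get the stated conclusion $(a,b)\in\mathrm{S}^7\cap(\bb R\times\bb R)$ I need to see why the $\mathbf i$-components must also vanish — this should come from combining the conjugation condition on \emph{each} factor with the constraint $|a|^2+|b|^2=1$ together with whatever normalization/slice the paper uses (for instance if one additionally quotients by, or has already used up, the freedom in the $\mathbf i$-direction via the principal action $\bullet$). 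Concretely I would argue: write $a = a_0 + a_1\mathbf i + (\text{j,k part})$; the conjugation-invariance kills the $\mathbf j,\mathbf k$ parts; and then an extra argument (using that the point lies in the chosen representative set, or using effectiveness of the action) forces $a_1=0$, likewise $b_1 = 0$. This is the step I expect to be the main obstacle — pinning down exactly why $\mathbf i$-components drop out and are not merely invariant — because it depends on a convention in the preceding sections that I would need to state explicitly.

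For the isotropy claim, I would take $(a,b)$ with, say, $b=0$ and $a\in\mathrm{S}^1$ (so $|a|=1$, $a\in\bb C$), and compute $\mathrm{S}^1_{(a,b)} = \{q\in\mathrm{S}^1 : qa\overline q = a,\ qb\overline q = b\}$. The second equation is vacuous since $b=0$; the first reads $qa\overline q = a$, i.e.\ $qa = aq$, i.e.\ $a$ commutes with $q$. Since $a\in\mathrm{S}^1$ and two unit complex numbers always commute, this would naively give all of $\mathrm{S}^1$ as isotropy, contradicting the $\bb Z_2$ claim — so again the resolution must be that the action in \eqref{eq:GMstaraction} on these particular points is not literally $qa\overline q$ but something like $qa q$ (a twist appearing because of how $\mathrm{S}^7$ sits in $\mathrm{Sp}(2)$ and how the $\star$-action descends). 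With the correct formula, the isotropy equation becomes $q^2 a = a$, forcing $q^2 = 1$, hence $q\in\{1,-1\}\cong\bb Z_2$, and identifying $-1$ with the element $(a,b)$ itself under the relevant parametrization gives $\mathrm{S}^1_{(a,b)} = \{e,(a,b)\}$. Finally, for a generic $(a,b)\notin\mathrm{S}^7\cap(\bb R\times\bb R)$ and not of the special $\mathrm{S}^1$-type, at least one of $a,b$ has nonzero component outside $\bb C$, and the conjugation equation $qa\overline q = a$ (or its twisted analogue) then forces $q=e$, so the isotropy is trivial; I would write this last case out carefully by case analysis on which of $a,b$ fails to lie in $\bb C$. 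The overall structure is therefore: (1) a quaternionic linear-algebra lemma on conjugation fixed sets; (2) apply it factorwise for the whole-group fixed points and supply the extra argument killing $\mathbf i$-components; (3) solve the isotropy equation on the special points to get $\bb Z_2$; (4) a short case analysis showing triviality elsewhere.
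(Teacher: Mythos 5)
Your instinct to use the elementary quaternionic fact that $qv\overline q=v$ for all $q\in\mathrm S^1=\{e^{\mathbf i\theta}\}$ iff $v\in\mathbb C=\mathrm{span}_{\bb R}\{1,\mathbf i\}$ is the right tool, and the obstacles you flag are genuine, not artifacts of misreading a convention. Applied factorwise it yields that the fixed set is $\mathrm S^7\cap(\mathbb C\times\mathbb C)\cong\mathrm S^3$, not $\mathrm S^7\cap(\bb R\times\bb R)$, and you are also correct that (i) any $(a,0)$ with $a\in\mathrm S^1\subset\mathbb C$ commutes with every $q\in\mathrm S^1$, so its isotropy is all of $\mathrm S^1$ rather than $\mathbb Z_2$, and (ii) $-1$ is in the kernel of the whole conjugation action, so every non-fixed point has isotropy containing $\{\pm1\}\cong\mathbb Z_2$ rather than being trivial. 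There is no twist hidden in the passage from \eqref{eq:GMstaraction} to $\mathrm S^7$: the $\star$-action on $\mathrm{Sp}(2)$ descends through $\pi$ exactly to $q\cdot(a,b)=(qa\overline q,qb\overline q)$, so the speculative ``$qaq$'' rescue you consider does not apply.

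The paper's own proof takes a different route from yours: it writes $a,b,q$ in quaternionic exponential form, claims $[a,q]=0$ is equivalent to $e^{2(\mathrm{Im}(a)\theta_a/|\mathrm{Im}\,a|-\mathbf i\theta_q)}=1$, and then derives a contradiction from $|a|^2+|b|^2=1$ by asserting $|q|^2=2|a|^2+2|b|^2=2$. That chain does not hold up: the commutation of $a$ with a fixed nontrivial $q\in\mathrm S^1$ only forces $\mathrm{Im}(a)\parallel\mathbf i$ (i.e.\ $a\in\mathbb C$), with no constraint tying $\theta_a$ to $\theta_q$, and the identity producing $|q|^2=2$ is not justified. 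The subsequent case $b=0$, $\mathrm{Im}\,a\neq 0$ is then said to give ``$a=q$, so $\mathrm S^1$ itself is fixed,'' which does not support the stated $\mathbb Z_2$ conclusion. So the gap you identify is real on both sides: your approach is the standard and correct one, it cannot reach the stated conclusion because the lemma as written overstates the constraint on fixed points (it should read $\mathbb C\times\mathbb C$) and misassigns the $\mathbb Z_2$ isotropy (which is the generic isotropy coming from the kernel $\{\pm1\}$, not something special to $a\in\mathrm S^1$, $b=0$), and the paper's alternative exponential-form argument does not repair this.
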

\begin{proof}
Since we are considering $(a,b)\in \mathrm{S}^7$ where both $a, b$ are quaternions, let $q\in \mathrm{S}^1$ be such that $q\neq 1$ but $qa\bar q = a, qb\bar q = b$. It follows that $[a,q] = 0 = [b,q]$, where $[\cdot,\cdot]$ denotes the quaternionic bracket. Assuming that $\mathrm{Im}a, \mathrm{Im}b \neq 0$ write $a = |a|\mathrm{e}^{\mathrm{Im}a\theta_a/|\mathrm{Im}a|}$, $b = |b|\mathrm{e}^{\mathrm{Im}b\theta_b/|\mathrm{Im}b|}$ and $q = \mathrm{e}^{\mathbf{i}\theta_q}$. Here, $\theta_{a}, \theta_{b}, \theta_{q} \in [0,2\pi[$ are such that
\[\cos(\theta_a) = \mathrm{Re}a/|a|, ~\sin(\theta_a) = |\mathrm{Im}a|/|a|,\]
making the corresponding analogous definitions for both $b, q$. 

Hence,
\begin{equation*}
[a,q] = 0 \Leftrightarrow \mathrm{e}^{2\left(\mathrm{Im a}\theta_a/|\mathrm{Im} a| - i\theta_q\right)} = 1.
\end{equation*}

From the definition of exponential to quaternions, one gets that $\mathrm{Im~a}\theta_a = |\mathrm{Im}~a|\mathbf{i}\theta_q$.
Hence, $a = a_0 + |\mathrm{Im}a|\theta_q/\theta_a\mathbf{i} + 0\mathbf{j} + 0\mathbf{k}$. The same argumentation implies that $b = b_0 + |\mathrm{Im b}|\theta_q/\theta_b\mathbf{i} + 0\mathbf{j} + 0\mathbf{k}$. Hence, $\theta_q = \theta_a = \theta_b$ and so $q = a_0/|a| + \mathbf{i}a_1/|a| = b_0/|b| + \mathbf{i}b_1/|b|$.
However, using that $|a|^2 + |b|^2 = 1$ we have that $|q|^2 = 2|a|^2+2|b|^2 = 2$, a contradiction. Therefore, either $\mathrm{Im}a = 0$ or $\mathrm{Im}b = 0$. In the first case, $a\in\mathbb{R}$ and $b = b_0 + |\mathrm{Im}b|\mathbf{i}$ since $\theta_q = \theta_b$. Moreover, $q = b_0/|b|+b_1\mathbf{i}/|b|$ and we once more derive the same contradiction unless $\mathrm{Re}a = 0$. Hence, either $\mathrm{Im}b = 0$ or $a = 0$. That considered, we have that
$(a,b) \in \mathrm{S}^7$ such that $a,b \in \bb R$ are fixed points by the action defined in equation\eqref{eq:GMstaraction}.

Assume now that $b = 0$ but $\mathrm{Im}a \neq 0$. Then $\mathrm{Im a}\theta_a = |\mathrm{Im}a|\mathbf{i}\theta_q$ and $\theta_a = \theta_q$, that is, in this case $a = q$. So $\mathrm{S}^1$-itself is fixed by this action. The same analysis holds for $a = 0$ but $b\neq 0$, finishing the proof.\qedhere
\end{proof}

 At these points, we now study the behavior of what is called \emph{fake horizontal vectors}, first introduced in \cite{cavenaghiesilva}. Namely, since the vertical space collapses at fixed points, say that $x\in M$ is one such point. Then, fixed a horizontal geodesic $c$ emanating from $x$, we proceed to characterize the horizontal vectors at $x$, which behave like time-like vectors along $c$ for $s \neq 0$.

 \begin{lemma}[Lemma 3.2 in \cite{cavenaghiesilva}]\label{lem:S_X}
Let $(M,\ga)$ be a path-connected Riemannian manifold with an effective isometric action by a compact connected Lie group $G$. Let $x$ be a $G$-fixed point and $X\in \cal H_x$. Assume that the horizontal geodesic $\gamma : [0,\epsilon] \to M$ defined as $\gamma(s)=\exp_x(sX)$ intersects the principal stratum for any $s > 0.$ Define \begin{align*}
    \tilde S_X:\lie g&\to T_xM\\ U&\mapsto \nabla_XU^*_x.
  \end{align*} 
  Then,
    \begin{enumerate}[$1.$]
      \item the image $\tilde S_X({\lie g_x})$ is contained in $\cal H_x$. Moreover, for $\epsilon>0$ sufficiently small, the following defines a smooth bundle on $\gamma([0,\epsilon))$
      \[\tilde{\cal H}_s=\begin{cases}\cal H_{\gamma(s)} ~~\text{if}~ s > 0\\ (\tilde S_X(\lie g_x))^\bot~ ~\text{if}~ s=0.
      \end{cases}
      \]
      \item the kernel of the restriction $\tilde S_X|_{\lie g_X}$ coincides with $\lie g_X$, the Lie algebra of $G_X=\{r\in G_x~|~\rho(r)X=X \}$, where $\rho(r) : T_xM\rightarrow T_xM$ is the isotropy representation at $r\in G_x$.
    \end{enumerate} 
  \end{lemma}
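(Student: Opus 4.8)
The plan is to verify the two assertions by a direct computation inside the Riemannian manifold $(M,\ga)$, working with the Levi--Civita connection $\nabla$ and the action fields $U^*$, $U \in \lie g$, exactly as in the set-up preceding the statement. First I would establish assertion~1 in two halves. For the inclusion $\tilde S_X(\lie g_x) \subset \cal H_x$: given $U \in \lie g_x$, the action field $U^*$ vanishes at $x$ (since $x$ is $G$-fixed, indeed $\lie g_x = \lie g$, so this is automatic for all $U$), hence $\tilde S_X U = \nabla_X U^*_x$ is the derivative at $x$ along the horizontal geodesic $\gamma$ of a vector field vanishing at $x$; to see it lands in $\cal H_x$ one computes, for any $V \in \lie g$, the inner product $\ga(\nabla_X U^*, V^*)$ at $x$ and uses that $\ga(U^*,V^*)$ is constant along nothing in particular — instead one uses the Killing equation: since $U^*$ is a Killing field, $\ga(\nabla_X U^*, V^*) + \ga(\nabla_{V^*} U^*, X) = 0$ pointwise, and at $x$ the term $\nabla_{V^*} U^*$ involves $V^*_x = 0$ together with a correction, so one must be slightly careful — the cleanest route is: $\ga(\nabla_X U^*, V^*)\big|_x = X\,\ga(U^*,V^*) - \ga(U^*,\nabla_X V^*)\big|_x = 0$ because both $U^*_x = 0$ and $V^*_x = 0$, killing the second term, and the first term is the derivative of a function that vanishes to second order at $x$ (as $U^*, V^*$ both vanish there), hence has zero gradient. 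Thus $\tilde S_X U \perp \cal V_x$, giving the claim. For the smooth-bundle statement, I would invoke that $\gamma$ meets the principal stratum for $s > 0$, so $\cal H_{\gamma(s)}$ is the orthogonal complement of a smoothly varying $\cal V_{\gamma(s)}$; the content is that this extends smoothly across $s = 0$ to $(\tilde S_X(\lie g_x))^\perp$, which I would get by writing $\cal V_{\gamma(s)} = \mathrm{span}\{U^*_{\gamma(s)} : U \in \lie g\}$, Taylor-expanding $U^*_{\gamma(s)} = s\,\nabla_X U^*_x + O(s^2)$ (using $U^*_x = 0$), and observing that after rescaling by $1/s$ the span converges to $\tilde S_X(\lie g)$ — but one must restrict to $\lie g_x$-directions that actually survive, which is precisely where part~2 feeds back in; I would therefore prove part~2 first, or at least the linear-algebra core of it, and then assemble part~1.

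For assertion~2, the map in question is $\tilde S_X|_{\lie g}\colon \lie g \to T_xM$, $U \mapsto \nabla_X U^*_x$ (with $\lie g_x = \lie g$ here), and I want to identify its kernel with $\lie g_X$, the Lie algebra of the isotropy subgroup of $X$ under the isotropy representation $\rho$ of $G_x = G$ on $T_xM$. The key observation is that for a $G$-fixed point $x$, the flow of $U^*$ fixes $x$ and its differential at $x$ is exactly the one-parameter subgroup $t \mapsto \rho(\exp tU)$ of $\mathrm{GL}(T_xM)$; differentiating, the endomorphism $d\rho(U)\colon T_xM \to T_xM$ of the isotropy representation satisfies $d\rho(U)(Y) = \nabla_Y U^*_x = [\,U^*, Y\,]\big|_x + \nabla_{U^*} Y\big|_x$ for any vector field $Y$ extending a tangent vector, and since $U^*_x = 0$ the second term drops, leaving $d\rho(U)(Y) = \nabla_Y U^*_x$. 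In particular $\tilde S_X U = \nabla_X U^*_x = d\rho(U)(X)$, so $\tilde S_X U = 0$ if and only if $d\rho(U)X = 0$, i.e. $X$ is fixed by the one-parameter group $\rho(\exp tU)$, i.e. $U$ lies in the Lie algebra of $G_X = \{r \in G : \rho(r)X = X\}$. That is exactly $\lie g_X$, proving part~2.

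The main obstacle I expect is the smooth-bundle portion of part~1: making rigorous the claim that the orthogonal complements $\cal H_{\gamma(s)}$ extend smoothly across $s=0$ to $(\tilde S_X(\lie g_x))^\perp$. The difficulty is that $\dim \cal V_{\gamma(s)}$ may jump down at $s = 0$ (the vertical space collapses at the fixed point), so one cannot simply say ``orthogonal complement of a smooth subbundle.'' The fix is to pass to the rescaled vertical frame: choose a basis $U_1, \dots, U_k$ of a complement to $\ker \tilde S_X$ in $\lie g$, note $\tilde S_X U_1, \dots, \tilde S_X U_k$ are linearly independent in $T_xM$ by part~2, and show that $\{ s^{-1} (U_j)^*_{\gamma(s)} \}$ extends to a smooth frame of a rank-$k$ subbundle over $\gamma([0,\epsilon))$ with value $\{\tilde S_X U_j\}$ at $s=0$, using that $(U_j)^*_{\gamma(s)}$ vanishes at $s=0$ with derivative $\tilde S_X U_j \neq 0$; then $\tilde{\cal H}_s$ is the orthogonal complement of this genuinely smooth rank-$k$ bundle, which has constant rank and hence is smooth. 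For $s > 0$ this complement is $\cal H_{\gamma(s)}$ because the rescaled frame spans the same space as $\cal V_{\gamma(s)}$ on the principal stratum, and at $s = 0$ it is $(\tilde S_X(\lie g_x))^\perp$ by construction. I would also flag the minor point that one needs $\epsilon$ small enough that $\gamma([0,\epsilon))$ stays in a single orbit type for $s > 0$, which is what ``intersects the principal stratum for any $s > 0$'' together with openness of the principal stratum provides.
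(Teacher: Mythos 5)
The paper does not prove this lemma; it is quoted verbatim as ``Lemma 3.2 in \cite{cavenaghiesilva}'' and used as a black box, so there is no in-paper proof to compare against. Judged on its own, your proposal is a correct outline. The identification $\tilde S_X U = \nabla_X U^*_x = d\rho(U)(X)$, obtained from $\nabla_Y U^* = [Y,U^*] + \nabla_{U^*}Y$ and $U^*_x = 0$, is exactly the right linear-algebraic core, and it immediately gives $\ker(\tilde S_X|_{\lie g_x}) = \lie g_X$. (Your intermediate bracket has a stray sign --- you wrote $[U^*,Y]$ where torsion-freeness gives $[Y,U^*]$ --- but since the isotropy-fixed-vector condition is sign-insensitive this does not affect the conclusion.) Your strategy of proving item~2 first and feeding it into the rescaled-frame construction for item~1 is also the right way to resolve the apparent circularity.

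Two small points worth tightening. First, the inclusion $\tilde S_X(\lie g_x)\subset\cal H_x$ is vacuous as stated here: at a $G$-fixed point $x$ the orbit is a single point, so $\cal V_x = \{0\}$ and $\cal H_x = T_xM$. Your Leibniz-rule computation is harmless but proves nothing beyond this triviality; what it \emph{does} quietly establish is that the same argument would give the inclusion in the more general setting of \cite{cavenaghiesilva} where $x$ need not be fixed, which is presumably why the clause appears in the statement. Second, in the rescaled-frame argument you should make explicit the slice-theorem input that $G_{\gamma(s)} = G_X$ for all sufficiently small $s>0$ (the isotropy of $\exp_x(sX)$ equals that of $sX$ in the slice representation, and $G_{sX}=G_X$ for $s\neq 0$). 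Without this one cannot assert that the frame $\{s^{-1}(U_j)^*_{\gamma(s)}\}$ spans all of $\cal V_{\gamma(s)}$ for $s>0$, nor that the limiting rank at $s=0$ agrees with $\dim\cal V_{\gamma(s)}$; once it is in place, Hadamard's lemma (applied after parallel-transport trivialization along $\gamma$) gives the smoothness of the extended bundle, completing item~1.
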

   
  In our case, in a fixed point $x$ we have that $\lie g_x = \lie g$ but also that $\dim_{\bb R}\lie g_x = \dim_{\bb R}\mathbf{i}\bb R =1$. Therefore, for each $X\in \cal H_x$ either $\lie g_X = 0$ or $\lie g_X \cong \lie g \cong \mathbf i\bb R$. If $\lie g_X = \mathbf i \bb R$, then $\widetilde S_X \equiv 0$ and hence, the parallel transport of each horizontal vector along the geodesic generated by $X$ leads to a horizontal vector at $x$. Hence, $X$ is precisely a \emph{fake-horizontal vector}. The geodesic $s\mapsto \gamma(s)$ is time-like for $s > 0$. We conclude:
  \begin{theorem}\label{thm:vaiajudar}
      Let $(M,\ga_{-r^2})$ be a path-connected semi-Riemannian manifold with an effective isometric action by $\mathrm{S}^1$ and $x$ be a $G$-fixed point. Assuming that $\ga_{-r^2}$ is Lorentzian in the principal stratum, given $X\in \cal H_x \cong T_xM$, the geodesic generated by $s\mapsto \gamma(s)$ is time-like for any $s>0$ if, and only if, $\lie g_X = \lie{s}^1\cong \mathbf i\bb R$.
  \end{theorem}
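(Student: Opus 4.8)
The plan is to recognize that Theorem~\ref{thm:vaiajudar} is essentially an unwinding of Lemma~\ref{lem:S_X} in the special case $G=\mathrm{S}^1$, combined with the observation that, on the principal stratum, the Lorentzian direction of $\ga_{-r^2}$ is precisely the vertical direction of the $\mathrm{S}^1$-action, coming from the negative $-r^2$-Cheeger term as explained in Remark~\ref{rem:importante}(c). So the first step is to set up notation: fix a $G$-fixed point $x$, a horizontal vector $X\in\cal H_x\cong T_xM$ (the identification holds because $\cal V_x=0$ at a fixed point), and the geodesic $\gamma(s)=\exp_x(sX)$, which by hypothesis meets the principal stratum for all $s>0$. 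One then invokes Lemma~\ref{lem:S_X} to obtain the smooth subbundle $\tilde{\cal H}_s$ along $\gamma([0,\epsilon))$ extending $\cal H_{\gamma(s)}$ for $s>0$ by $(\tilde S_X(\lie g_x))^\perp$ at $s=0$, together with the identification of the kernel of $\tilde S_X|_{\lie g_X}$ with $\lie g_X$.

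The second step is the dichotomy on $\lie g_X$. Since $\dim_{\bb R}\lie g = \dim_{\bb R}\lie{s}^1 = 1$ and $\lie g_x=\lie g$ at a fixed point, the isotropy subalgebra $\lie g_X\subseteq\lie g$ is either $0$ or all of $\lie g\cong\mathbf i\bb R$. I would treat the two cases separately. If $\lie g_X=\lie{s}^1$, then $\tilde S_X\equiv 0$ on $\lie g_x=\lie g$, so the map $U\mapsto\nabla_X U^*_x$ is identically zero; this means that the action field generating the vertical (i.e.\ Lorentzian) direction is parallel-transported along $\gamma$ into a vector that stays orthogonal to $\dot\gamma$ and — crucially — the tangent $\dot\gamma(s)$ itself picks up a nonzero vertical component for $s>0$: since $\tilde{\cal H}_0 = (\tilde S_X(\lie g_x))^\perp = T_xM$ degenerates to ``all directions are fake-horizontal,'' the geodesic $\gamma$ is \emph{not} a genuine horizontal geodesic of the submersion for $s>0$, and along it the Cheeger-deformed metric $\ga_{-r^2}$ restricted to $\dot\gamma$ picks up the negative $-r^2$ contribution from the now-nontrivial vertical projection, making $\ga_{-r^2}(\dot\gamma,\dot\gamma)<0$ for all $s>0$. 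Conversely, if $\lie g_X=0$, then $\tilde S_X|_{\lie g_x}$ is injective, its image $\tilde S_X(\lie g_x)$ is a genuine line in $\cal H_x$, and $\gamma$ is (by the construction of $\tilde{\cal H}_s$) a bona fide horizontal geodesic of the Riemannian submersion for small $s$, along which $\ga_{-r^2}(\dot\gamma,\dot\gamma)=|X|^2_{\ga}>0$ because horizontal vectors are unaffected by the $-r^2$-Cheeger deformation (Proposition~\ref{propauxiliar}(2): $C_t(\overline X)=X$ when $\overline X=X$ is horizontal). Hence $\gamma$ is space-like for $s>0$, which gives the ``only if.''

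The third step is to make the vertical-component computation in the first case precise using the Cheeger-deformation tensors. With $P=1$ (totally geodesic fibers, Remark~\ref{rem:importante}(a)-(b)) and $t=-r^2$, one writes $\dot\gamma(s)=X_h(s)+W^*(s)$ in horizontal/vertical parts with respect to $\gamma(s)$ in the principal stratum, notes that $W^*(0)=0$ since $X$ is horizontal, and uses $\tilde S_X\equiv 0$ to show $\|W^*(s)\|$ grows like $s$ times an explicit nonzero factor controlled by the second fundamental form / O'Neill data, so that $\ga_{-r^2}(\dot\gamma,\dot\gamma)=\|X_h\|^2_\ga + (1-r^2)\|W^*\|^2_{\mathrm{S}^1}<0$ for all $s\in(0,\epsilon)$ once $r^2>1$; finally, $\ga_{-r^2}(\dot\gamma(s),\dot\gamma(s))$ is constant along the geodesic, so time-likeness at one $s>0$ propagates to all $s>0$. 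I expect the main obstacle to be step three: verifying rigorously that the vertical component $W^*(s)$ is genuinely nonzero (not merely $o(s)$) for all small $s>0$ when $\lie g_X=\lie{s}^1$, which is where the hypothesis that $\gamma$ hits the principal stratum for every $s>0$ — together with the precise smooth-bundle extension of $\tilde{\cal H}_s$ from Lemma~\ref{lem:S_X} — has to be used carefully; everything else is either the case dichotomy (elementary, since $\dim\lie g=1$) or a direct application of the Cheeger tensor formulae of Proposition~\ref{propauxiliar} and the sign bookkeeping already rehearsed in Theorem~\ref{thm:einsteintoy}.
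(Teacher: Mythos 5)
Your outline (dichotomy on $\lie g_X$ forced by $\dim\lie{s}^1=1$, Lemma~\ref{lem:S_X} and $\tilde S_X\equiv 0$, the ``fake-horizontal'' language) matches the paper's, which is essentially the terse paragraph preceding the theorem. The ``only if'' half of your argument is also fine: when $\lie g_X=0$ the geodesic stays horizontal, horizontal lengths are unchanged by the Cheeger deformation, so $\ga_{-r^2}(\dot\gamma,\dot\gamma)=|X|^2_\ga>0$.

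The gap is in your step~3, and it is a real one, not merely a rigor issue. If $\lie g_X=\lie{s}^1$ then (because $\mathrm{S}^1$ is connected) the isotropy subgroup $G_X=\{r\in\mathrm{S}^1:\rho(r)X=X\}$ is all of $\mathrm{S}^1$. Since every $g\in\mathrm{S}^1$ fixes $x$ and $\rho(g)X=X$, naturality of the exponential map gives $g\cdot\gamma(s)=\exp_{gx}(s\,\rho(g)X)=\gamma(s)$ for all $s$; that is, the \emph{entire} geodesic lies in $\mathrm{Fix}(\mathrm{S}^1)$. On that set the vertical distribution is identically zero, so $\dot\gamma(s)$ never ``picks up a nonzero vertical component,'' and your decomposition $\ga_{-r^2}(\dot\gamma,\dot\gamma)=\|X_h\|^2_\ga+(1-r^2)\|W^*\|^2$ returns $|X|^2_\ga>0$, not something negative. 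For the same reason the geodesic never meets the principal stratum, so the hypothesis of Lemma~\ref{lem:S_X} (that $\gamma(s)\in M^{reg}$ for $s>0$) is violated and the smooth bundle $\tilde{\cal H}_s$ is not available to you in this case. So the mechanism you invoke to get time-likeness cannot happen. What the paper actually does here is shorter and orthogonal to your step~3: it labels the $\lie g_X=\lie{s}^1$ direction a fake-horizontal (time-like) direction and asserts the conclusion; the time-likeness is then effectively \emph{built in} by the subsequent theorem, whose $\widetilde\ga_{-r^2}$ is defined to be negative on $\mathrm{span}_{\bb R}\{X\}$. Your attempt to derive the sign dynamically from a vertical contribution along $\gamma$ is precisely the step that fails, and it is worth flagging that you already suspected this (``I expect the main obstacle to be step three''): the vertical component $W^*(s)$ is not merely hard to bound from below — it is identically zero.
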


  We can fix the metric $\ga_{-r^2}$ with the former theorem to produce a global Lorentzian metric on any almost Lorentzian manifold.
  \begin{theorem}
      Let $(M,\ga)$ be an almost Lorentzian manifold with the Riemannian metric $\ga_{-r^2}$. The there exists a Lorentzian metric $\widetilde \ga_{-r^2}$ on $M$ which coincides with $\ga_{-r^2}$ in the principal stratum in $M$.
  \end{theorem}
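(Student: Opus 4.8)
The plan is to localize the argument around the fixed-point set $M^{\mathrm{S}^1}$ and to exploit that the only failure of $\ga_{-r^2}$ to be Lorentzian is the collapse of its time-like line at fixed points, so that the whole problem reduces to extending that line field across $M^{\mathrm{S}^1}$. First I would invoke the differentiable slice theorem: $M^{\mathrm{S}^1}$ is a closed embedded submanifold with an $\mathrm{S}^1$-invariant tubular neighbourhood $\Tub(M^{\mathrm{S}^1})$, equivariantly identified with a disc subbundle of the normal bundle $\nu$, on whose fibres $\mathrm{S}^1$ acts linearly with only nontrivial weights. I would fix concentric invariant subtubes $\Tub_1 \subset \overline{\Tub_1} \subset \Tub_2 \subset \Tub(M^{\mathrm{S}^1})$, with $\Tub_1$ as thin as one likes, set $\widetilde\ga_{-r^2} := \ga_{-r^2}$ on $M \setminus \Tub_1$ (where it is already Lorentzian), and reduce the theorem to producing a Lorentzian metric on $\Tub_2$ that agrees with $\ga_{-r^2}$ on the collar $\Tub_2 \setminus \overline{\Tub_1}$.

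To build the metric on $\Tub_2$ I would encode Lorentzian metrics by pairs $(h,L)$, an $\mathrm{S}^1$-invariant Riemannian metric $h$ together with an $\mathrm{S}^1$-invariant time-like line field $L \subset T\Tub_2$, via the sign-flip $g_{(h,L)} := h - 2\,\eta\otimes\eta$ with $\eta = h(T,\cdot)$ for any local $h$-unit generator $T$ of $L$; every Lorentzian metric arises this way. On the collar I would choose $h$ and $L$ so that $g_{(h,L)}$ is literally $\ga_{-r^2}$, with $L$ spanned by the action field $U^*$. The heart of the matter is the extension of $L$ across $M^{\mathrm{S}^1}$: Theorem~\ref{thm:vaiajudar} tells me what it must be along $M^{\mathrm{S}^1}$, namely a line inside the fixed subspace $F = TM^{\mathrm{S}^1}$ of the isotropy representation — precisely the directions whose radial geodesics remain time-like — and, reading off the weight decomposition of $\nu$, I would rotate the orbit direction $U^*$ within a suitable $2$-plane onto a prescribed nowhere-zero direction of $F$ as the radius shrinks to $0$, performing this rotation smoothly and $\mathrm{S}^1$-equivariantly over all of $\Tub_2$ (the needed nowhere-zero line field tangent to $M^{\mathrm{S}^1}$ exists componentwise — automatic in all our examples, where $M^{\mathrm{S}^1}$ is a union of circles). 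Extending $h$ over $\Tub_2$ in any smooth invariant way matching the collar then makes $g_{(h,L)}$ Lorentzian throughout $\Tub_2$, equal to $\ga_{-r^2}$ on the collar.

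It remains to glue: since $\widetilde\ga_{-r^2}$ on $M\setminus\Tub_1$ and $g_{(h,L)}$ on $\Tub_2$ agree on $\Tub_2 \setminus \overline{\Tub_1}$, they patch to one smooth symmetric $2$-tensor on $M$ which is Lorentzian on $M \setminus \Tub_1$ and on $\Tub_2$, hence on $M$, and which coincides with $\ga_{-r^2}$ off the arbitrarily thin tube $\Tub_1$, i.e. on the principal stratum away from $M^{\mathrm{S}^1}$. The hard part will be the rotation step: one must check, from the explicit normal form of the $\mathrm{S}^1$-action near $M^{\mathrm{S}^1}$ and the form of $U^*$ in linear normal coordinates, that the time-like line can indeed be turned from the orbit direction into a direction tangent to $M^{\mathrm{S}^1}$ smoothly and equivariantly, with no topological obstruction, and that $g_{(h,L)}$ stays nondegenerate all along the transition collar — equivalently, that the moving line $L$ never meets the null cone of the metric it already carries.
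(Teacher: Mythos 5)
Your approach diverges from the paper's in a substantive way, and in doing so it exposes a real issue. The paper's proof simply declares $\widetilde\ga_{-r^2}:=\ga_{-r^2}$ on the principal stratum $M^{reg}$, and at each fixed point $x$ it sets $\widetilde\ga_{-r^2}:=\ga|_{(\mathrm{span}_{\bb R}\{X\})^{\perp_\ga}}+(1-r^2)^{-1}\ga|_{\mathrm{span}_{\bb R}\{X\}}$ for a vector $X\in T_xM$ with $\lie g_X=\lie{s}^1$, asserting that well-definition of the resulting tensor is straightforward. You instead encode the Lorentzian metric by an auxiliary Riemannian metric together with a time-like line field, and interpolate that line field across a tubular neighbourhood of $M^{\mathrm{S}^1}$, matching $\ga_{-r^2}$ only on $M\setminus\Tub_1$.

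The consequence is that your metric agrees with $\ga_{-r^2}$ only on $M^{reg}\setminus\Tub_1$, not on all of $M^{reg}$ as the theorem demands — you notice this yourself in your last sentence — so as written your argument proves a weaker statement. That said, your weakened conclusion seems to be the correct one: the paper's prescription is not even continuous, because as $p\to x\in M^{\mathrm{S}^1}$ the orbit tensor satisfies $P\to 0$ (the action field degenerates), hence $(1-r^2P)^{-1}\to 1$ and $\ga_{-r^2}\to\ga$, the original Riemannian metric, which differs from the sign-flipped tensor the paper assigns at $x$; any smooth Lorentzian extension must therefore deviate from $\ga_{-r^2}$ near the fixed set, exactly as your tube construction does by design. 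You also correctly flag the topological hurdle — the existence of a nowhere-zero line field tangent to $M^{\mathrm{S}^1}$ — which, via $\chi(M)=\chi(M^{\mathrm{S}^1})$ for circle actions, is precisely the necessary Euler-characteristic-zero condition and is satisfied in the paper's examples (the fixed sets are circles) but is not automatic from the hypotheses as stated. In short, your construction is the more careful one and identifies what the proof genuinely needs; it does not prove the theorem literally as stated, but the literal statement appears to require exactly the weakening your proof provides.
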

  \begin{proof}
      According to Theorem \ref{thm:vaiajudar}, let $x\in M$ be a fixed point for the $\mathrm{S}^1$ almost semi-free action. Take $X\in T_xM$ such that $\lie g_X = \lie{s}^1$. We define
      \[\widetilde \ga_{-r^2} = \ga|_{\left(\mathrm{span}_{\bb R}\left\{X\right\}\right)^{\perp_{\ga}}} +(1-r^2)^{-1}\ga|_{\mathrm{span}_{\bb R}\{X\}}.\]
      Moreover, we let $\widetilde \ga_{-r^2} := \ga_{-r^2}$ be the Lorentzian metric in the principal stratum $M^{reg}$. The well-definition of this metric is straightforward. \qedhere
  \end{proof}

To exemplify the desired fake horizontal vector at a fixed point given by Lemma \ref{lem:fixedpointssu2}, let $(a,b)\in \bb R\oplus \bb R$ such that $|a|^2 + |b|^2 = 1$. Then, the $\mathrm{S}^1$ action by conjugation in each factor fixes $(a,b)$. Let $s\mapsto c(t) = (a(s),b(s))$ a smooth curve in $\mathrm{SU}(2)$ such that $(a(0),b(0)) = (a,b)$ and assume that $(a'(0),b'(0)) = (Xa,bY)$ where $\mathrm{Re}(X|a|^2+Y|b|^2) = 0$. Then,
  \begin{align*}
      q\ast (a(s),b(s)) &:= (qa(s)\bar q,qb(s)\bar q)\\
      \rho(q)(Xa,bY) &= (qaX\bar q,qbY\bar q)\\
      \rho(q)(Xa,bY) &= (Xa,bY) \Leftrightarrow qaX=Xaq,~qbY = byq\\
      &\Leftrightarrow aX, bY \in \mathbb{C} \cong \mathrm{span}_{\bb R}\left\{1,\mathbf{i}\right\}.
  \end{align*}
  Hence, the geodesic generated by $(aX,bY) \in T_{(a,b)}\mathrm{SU}(2)\cap \bb C\oplus \bb C$ for $(a,b)\in \mathrm{SU}(2)\cap (\bb R\times \bb R)$ is time-like.

\subsection{$\mathrm{S}^1$-Fat bundles and a dual leaf type result}

	We recall that for a general (non-necessarily metric) submersion, the \emph{fatness} condition is an intrinsic property of its horizontal distribution $\cal H$. Namely, a submersion $\pi: M \rightarrow B$ is said to be \textit{fat} if, for every non-zero $X\in\cal H$, $\cal V=[\widetilde X,\cal H]^{\mathbf v}$ for some horizontal extension of $X$, where we denote by $\cal V$ the vertical distribution associated to $\pi$. We observe that often in the literature, the nomenclature for fatness is widely employed, assuming the preexistence of a Riemannian metric with totally geodesic leaves.

 Treating the fatness condition for $\mathrm{S}^1$-principal bundles is quite different from other structure groups simply due to the fact $\lie{s}^1$ is the (1$\bb R$-dimensional) Abelian Lie algebra generated by $\mathbf{i} \in \bb{C}$. In this manner, any connection form $\omega$ on the bundle and its corresponding curvature form $\Omega$ are nothing but ordinary $1$, $2$-forms, respectively, where $d\omega =\Omega$.

It can be inferred from Chern-Weil theory (\cite[Corollary 2.11, p. 10]{Ziller_fatnessrevisited}) that a principal $\mathrm{S}^1$-bundle is fat if, and only if the curvature form $\Omega$ is a pullback of some symplectic form on the base $B$: Fat $\mathrm{S}^1$-principal bundles are (bi)univocally associated with symplectic manifolds.

We recall the following examples of $\mathrm{S}^1$-fat Riemannian submersions (see \cite{Ziller_fatnessrevisited})
\begin{enumerate}[(a)]
    \item the Hopf bundles $\mathrm{S}^1\hookrightarrow \mathrm{S}^{2n+1}\rightarrow \bb{CP}^n$
    \item any $\mathrm{S}^1$-principal Riemannian bundle $(M,\ga) \rightarrow (B,\ga_B)$ with fibers of length $2\pi$ such that $\ga_B$ is either
    \begin{enumerate}[(i)]
        \item almost K\"ahler and so $\ga$ is $K$-contact
        \item K\"ahler and so $\ga$ is Sasakian
        \item K\"ahler Einstein and so $\ga$ is Sasakian Einstein
    \end{enumerate}
\end{enumerate}
A metric on a smooth manifold $M$ is named $K$-contact if it admits a unit Killing field $V$ with $\mathrm{sec}_{\ga}(X, V)  = 1$ for every non-degenerated vertizontal plane $X\wedge V$. We say further that the metric is regular if the integral curves of $V$ define a free-circle action, and it is quasi-regular if they define an almost free-circle action. According to \cite{Wadsley}, the latter is equivalent to requiring that all integral curves of $V$ are closed. If it is regular, $M\rightarrow B$ is a fat Riemannian submersion with totally geodesic fibers. 

 A metric is called Sasakian if it admits a unit Killing vector field $X$ such that $R_{\ga}(V, X)Y = \ga(V, Y)X-\ga(X, Y)V$ for every $X, Y\perp V$. As before, the Sasakian structure is called regular if the integral curves of $V$ define a free circle action and quasi-regular if they define an almost free circle action.

\subsubsection{Holonomy and Dual Holonomy fields}

Let $(M,\ga)$ be a Riemannian manifold with a Riemannian foliation $\cal F$. Considering the natural decomposition $TM = \cal V\oplus \cal H$ where $\cal V$ stands for the sub-bundle of $TM$ containing the vectors tangent to the leaves of $\cal F$, if $c : I \rightarrow M$ is a geodesic such that $\dot c(s)\in \cal H_{c(s)}$ for any $s\in I \subset \mathbb{R}$ a vector field $\nu$ such that $\nu(s) \in \cal V_{c(s)}$ for every $s\in I$ is named a dual-holonomy field provided if
\begin{equation}\label{eq:dualnaintro}
\nabla_{\dot c}\nu = S_{\dot c}\nu - A^*_{\dot c}\nu.
\end{equation}
Analogously, basic horizontal fields along a vertical curve $\gamma$ are horizontal solutions of
\begin{equation}\label{eq:codualnaintro}
\nabla_{\dot \gamma}X = -A^*_{X}\dot \gamma - S_{X}(\dot \gamma).
\end{equation}
In the above equations, $S_X$ always stands for the second fundamental form of a leaf. The tensor $A^*_X$ is $\ga$-dual of the O'Neill tensor $A_XY = \h[X,Y]^{\mathbf v}$, $X,Y \in \cal H$. Under the totally geodesic fibers assumption, both dual-holonomy and holonomy fields along horizontal geodesics and basic horizontal fields
along vertical geodesics are the Jacobi fields induced
by, respectively, local horizontal lifting and holonomy transport.

Moreover, $A^*_X\dot \gamma$ is basic along the geodesic $\exp(sV)$, where $V\in \cal V$ and $X\in \cal H$ is basic. If $\ga$ has non-negative sectional curvature, then for each leaf $L_p \in \cal H$,
\[T_pL_p^{\#}\cap \cal V_p = \mathrm{span}_{\mathbb{R}}\{A_XY(p) : X, Y \in \cal H_p\},\]
where $L_p^{\#}$ is a dual leaf through $p$:
\[L^{\#}_p := \{q \in M : \text{there exists a piecewise smooth horizontal curve from $p$ to $q$}\},\]
as described in \cite{wilkilng-dual}. 

Suppose that the foliation $\cal F$ is given by the connected components of the fibers of a Riemannian submersion $\pi: M \rightarrow B$. Recall that the \emph{holonomy group} at some $b \in B$ is defined considering: Fixed $b \in B$, consider all simple closed curves in $B$ based at $b$. We can define a diffeomorphism of the fiber $F = \pi^{-1}(b)$ over $b$ by lifting the curve horizontally to points of $F$. The collection of all of these diffeomorphisms forms an appropriate composition rule. Such a group is trivial in the case of a Riemannian product $F\times B \rightarrow B$. Generally, the holonomy group is not a Lie group, but it certainly is for \emph{Homogeneous bundles}. This is precisely the case for fat Riemannian submersions. 

For many foliations, such as the Riemannian foliation induced by the isometric action of a compact connected Lie group, it is known that the dual-foliation is a \emph{Singular Riemannian Foliation} (see \cite{wilkilng-dual}). A crucial point to the verification of this is the usage of completeness of each dual leaf.

On the other hand, it is known (for instance, via the now classic example of J. Wolf (\cite[Corollary 11.1]{Wolf1964})) that even under very high symmetrical conditions, a Lorentzian manifold needs not to be geodesically complete. We can show, however, that the family of compact Riemannian manifolds with almost semi-free $\mathrm{S}^1$-actions, when regarded with the metric $\widetilde \ga_{-r^2}$ given by Theorem \ref{thm:vaiajudar}, is time-like and space-like complete. In particular, we show that the dual leaf foliation induced by a fixed horizontal distribution has complete leaves for $\mathrm{S}^1$-fat foliations. Indeed, the dual foliation has a single leaf, which coincides with $M$. This was already observed for Riemannian metrics in \cite{cavenaghi2023dual}, so we show it works as well for Lorentzian metrics induced by circle actions.

\begin{theorem}
    Let $\mathrm{S}^1\rightarrow M \rightarrow B$ be a principal $\mathrm{S}^1$-fat bundle with connection form $\omega$. If $\ga$ is the unique (up to scaling) connection metric $\ga$ on $M$, any (semi) Riemannian dual foliation for the Lorentzian metric $\widetilde \ga_{-r^2}$ (for $r>1), $ has only one leaf, which coincides with $M$. Moreover, if the action is almost semi-free, the conclusion holds in the principal stratum $M^{reg}$.
\end{theorem}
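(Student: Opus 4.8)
The plan is to reduce the semi-Riemannian assertion to the Riemannian one by isolating the only datum on which a dual leaf depends, namely the horizontal distribution. First I would record the structural facts of the setting. Since $r>1$ and the $\mathrm{S}^1$-orbits may be assumed totally geodesic for $\ga$ (by \cite{solorzano} together with Remark \ref{rem:importante}), Proposition \ref{propauxiliar} with $P=1$ and $t=-r^2$ shows that on the one-dimensional vertical bundle $\cal V=\ker d\pi$ one has $\widetilde\ga_{-r^2}(V,V)=(1-r^2)^{-1}|V|_\ga^2<0$; thus $\cal V$ is time-like for $\widetilde\ga_{-r^2}$, its $\widetilde\ga_{-r^2}$-orthogonal complement is an honest (space-like) complement to $\cal V$ in $TM$, and since $\widetilde\ga_{-r^2}(X,U^*)=\ga(C_{-r^2}X,U^*)=\ga(X,U^*)$ for $\ga$-horizontal $X$, that complement is exactly $\ker\omega=\cal H$. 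In short, the horizontal distribution attached to $\widetilde\ga_{-r^2}$ is literally the connection distribution $\ker\omega$, the one attached to $\ga$ (the fixed-point modification of Theorem \ref{thm:vaiajudar} is vacuous here, a fat principal $\mathrm{S}^1$-bundle having no fixed points).

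Next I would note that the dual leaf $L^\#_p$ --- the set of points joined to $p$ by a piecewise smooth $\cal H$-horizontal curve --- is by definition a function of the pair $(M,\cal H)$ alone, not of the ambient metric. Hence the partition of $M$ into dual leaves for $\widetilde\ga_{-r^2}$ coincides verbatim with the one for $\ga$, and it suffices to prove the statement for the Riemannian connection metric $\ga$, which is the content of \cite{cavenaghi2023dual}. For a self-contained argument one may proceed directly: fatness of an $\mathrm{S}^1$-bundle means that for every nonzero $X\in\cal H$ there is $Y\in\cal H$ with $[\widetilde X,\widetilde Y]^{\mathbf v}\neq 0$ (equivalently, $d\omega$ is nondegenerate on $\cal H$), whence $\cal H+[\cal H,\cal H]=TM$; so $\cal H$ is bracket-generating of step two and, by the Chow--Rashevskii theorem, the $\cal H$-horizontal reachable set from any point is open. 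Since horizontal curves are reversible, these reachable sets partition the connected manifold $M$, so each equals $M$. Therefore $L^\#_p=M$ for all $p$: the dual foliation of $(M,\widetilde\ga_{-r^2})$ has the single leaf $M$.

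For the almost semi-free case I would run the same argument on the principal stratum $M^{reg}$, which is open, dense and, the singular set having codimension at least two, connected. On $M^{reg}$ the $\mathrm{S}^1$-action is a genuine principal action, $\widetilde\ga_{-r^2}$ there is the Cheeger deformation $\ga_{-r^2}$ (Theorem \ref{thm:vaiajudar} alters $\ga_{-r^2}$ only at the fixed points, which lie in $M\setminus M^{reg}$), its horizontal distribution is $\ker\omega|_{M^{reg}}$, and fatness of the foliation on $M^{reg}$ again yields the step-two bracket-generating condition. Chow--Rashevskii inside $M^{reg}$ then gives that the dual foliation of $(M^{reg},\widetilde\ga_{-r^2})$ has one leaf, namely $M^{reg}$.

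The step I expect to require genuine care, as opposed to being a formality, is the first: one must be sure that ``the dual foliation of the Lorentzian metric'' is really governed by $\ker\omega$ and not by some other, metric-sensitive, choice of complement --- that is, that $\cal V$ is non-degenerate for $\widetilde\ga_{-r^2}$ (which is exactly where $r>1$ enters), so that $\cal V^\perp$ is a bona fide complement, and that this complement is left unchanged both by the Cheeger deformation and by the fixed-point modification of Theorem \ref{thm:vaiajudar}. Once the metric-independence of $\cal H$ is secured, everything else is either the Riemannian statement of \cite{cavenaghi2023dual} or the short Chow--Rashevskii argument above.
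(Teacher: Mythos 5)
Your proof is correct and it takes a genuinely different route from the paper's once the preliminary reduction is done. Both arguments start the same way: since a dual leaf is by definition determined solely by the horizontal distribution, one notes that the $-r^2$-Cheeger deformation leaves the horizontal distribution $\ker\omega$ unchanged (indeed $C_{-r^2}$ is the identity on $\ker\omega$, and for $r>1$ the vertical line remains non-degenerate so $\cal V^\perp=\ker\omega$ is still a true complement), and that the modification of Theorem \ref{thm:vaiajudar} does nothing on a free action. Both proofs therefore reduce to the question: why does the $\ker\omega$-dual foliation have a single leaf?

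Here the two arguments diverge. The paper works curvature-theoretically, in the spirit of Wilking's dual foliation analysis: using the $\mathrm{S}^1$-fatness and the sign forced by $1-r^2<0$ it shows the vertizontal sectional curvatures of $\ga_{-r^2}$ are strictly negative, feeds this into the second-order inequality for the squared norm of a dual holonomy field $\nu$ along a horizontal geodesic, invokes compactness of $\mathrm{S}^1$ to kill exponential growth/decay, and concludes that $\nu$ has constant norm, so the vertical part of $T_{c(t)}L^\#_{c(t)}$ has constant (full) rank and each dual leaf is open, hence all of $M$. Your proof instead reads fatness as nondegeneracy of $d\omega$ on $\cal H$, hence as the step-two bracket-generating condition $\cal H+[\cal H,\cal H]=TM$, and then the Chow--Rashevskii theorem says the horizontal reachable set from any point is open; reversibility of horizontal curves makes the reachable sets into a partition of the connected manifold $M$, so each equals $M$. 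Your route is shorter, avoids any curvature estimate or appeal to dual holonomy fields, and lays bare that the conclusion is really a sub-Riemannian accessibility statement rather than a semi-Riemannian one. The paper's route, while longer, is the one that naturally generalizes to settings where a clean bracket-generating statement is unavailable but one still has the constancy of dual holonomy norms (for instance structure groups other than $\mathrm{S}^1$), and it is internally consistent with the rest of the paper's reliance on \cite{speranca2017on} and \cite{wilkilng-dual}. The handling of the almost semi-free case is the same in spirit in both arguments: restrict to the open, dense, connected principal stratum and rerun the argument there.
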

\begin{proof}
For the proof, note that if $\ga$ is the unique (up to scaling) connection metric on $\mathrm{S}^1\rightarrow M \rightarrow B$ and $A: \cal H\times \cal H \rightarrow \cal V$ is the O'Neill tensor, denoting by $A^*_X$ the $\ga$-dual to $A_X$ for any fixed $X\in \cal H$ one has that ${A^*_{-r^2}}_XV = (1-r^2)^{-1}A^*_XV$, where ${A^*_{-r^2}}_X$ is the $\ga_{-r^2}$-dual tensor to $A_X$. In particular, since the orbits of $\mathrm{S}, ^1$ are totally geodesic for $\ga$, these are totally geodesic for $\ga_{-r^2}$ as well. So $\mathrm{sec}_{\ga_{-r^2}}(X,V) = -(1-r^2)^{-1}|A^*_XV|_{\ga}^2$ for any $\ga_{-r^2}$-orthonormal vertizontal plane. In this manner, the fatness condition ensures that $\mathrm{sec}_{\ga_{-r^2}}(X,V) < 0$. 

We claim that each dual leaf is open. Indeed, since the dual foliation only depends on the choice of a horizontal distribution, we take $\cal H$ as the horizontal distribution given by the $\ga$-orthogonal complement of the Riemannian foliation induced by the orbits of $\mathrm{S}^1$. Now take a point $x\in L_x^{\#}$, where $L_x^{\#}$ is the dual leaf through $x$. The fatness condition ensures that any given $V\in \cal V_x$ has a constant norm along any horizontal geodesic emanating from $x$. This follows easily from equation (11) in \cite{speranca2017on}: Since $\mathrm{sec}_{\ga_{-r^2}}(X,V) < 0$ then any dual holonomy $\nu$  field along the geodesic $c$ generated by $X$ has constant norm, since
$\h\frac{d^2}{dt^2}|\nu|^2 < \mathrm{sec}_{\ga}(X,V)$
implies that $|\nu|^2 = e^{t\lambda}$ for $\lambda \in (-\sqrt{2},\sqrt{2})$. If $\lambda \neq 0$ then $|\nu|^2 \rightarrow \infty$ for $t\nearrow \infty$ (if $\lambda > 0$) or $-t\nearrow \infty$ (if $\lambda < 0$). Both cases contradict the compactness of $\mathrm{S}^1$, since every compact structure group enjoys the property $|\nu(t)|\leq L|\nu(0)|$ for some $L > 0$, see Definition 1.3 in \cite{speranca2017on}. The claim holds since then the dimension of $T_{c(t)}L_{c(t)}^{\#}\cap \cal V_{c(t)}$ is constant and so $\dim M = \dim L_x^{\#}$. \qedhere
\end{proof}

Recall that if $\mathrm{S}^1$ acts almost freely on a smooth compact connected manifold $M$, then the quotient space $M/G$ has the structure of an orbifold, see \cite[Proposition 1.5.1, p. 17]{caramello2022introduction}. In the following corollary, we use the terminology \emph{orbibundle} to refer to a smooth manifold with an effective isometric, almost free action, so the quotient is seen as an orbifold.

\begin{corollary}
The following manifolds admit Lorentzian metrics, which are time-like and space-like geodesically complete:
\begin{enumerate}[(a)]
    \item the Hopf orbibundle $\mathrm{S}^1\hookrightarrow \mathrm{S}^{3}\rightarrow \bb{CP}^1$, 
    \item any $\mathrm{S}^1$-principal Riemannian orbibundle $(M,\ga) \rightarrow (B,\ga_B)$ with (non-trivial) fibers of length $2\pi$ such that $\ga_B$ is either
    \begin{enumerate}[(i)]
        \item almost K\"ahler and so $\ga$ is $K$-contact
        \item K\"ahler and so $\ga$ is Sasakian
        \item K\"ahler Einstein and so $\ga$ is Sasakian Einstein
    \end{enumerate}
\end{enumerate}
\end{corollary}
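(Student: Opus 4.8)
The plan is to reduce the Corollary directly to the Theorem proved immediately before it, since each item in the list is precisely one of the examples of $\mathrm{S}^1$-fat Riemannian (orbi)bundles recalled earlier in the subsection. First I would observe that in every case listed the hypotheses of the preceding theorem are met: item (a) is the Hopf bundle $\mathrm{S}^1\hookrightarrow \mathrm{S}^3\to\bb{CP}^1$, which is $\mathrm{S}^1$-fat with its standard connection metric (the curvature form is the pullback of the Fubini--Study symplectic form, so fatness follows from the Chern--Weil characterization quoted from \cite{Ziller_fatnessrevisited}); and item (b), in each of its three sub-cases, is an $\mathrm{S}^1$-principal Riemannian (orbi)bundle with fibers of length $2\pi$ over an (almost) K\"ahler base, hence $K$-contact, Sasakian, or Sasakian--Einstein respectively, and again fat because the curvature form is the pullback of the (almost) K\"ahler form. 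In each case the connection metric $\ga$ is unique up to scaling, so the theorem applies verbatim.

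Next I would invoke the preceding theorem to conclude that for every $r>1$ the Lorentzian metric $\widetilde\ga_{-r^2}$ on the total space $M$ has the property that the dual foliation induced by the fixed horizontal distribution $\cal H$ has a single leaf equal to $M$ (in the almost-free case, equal to the principal stratum $M^{reg}$). The key geometric input, already established in that proof, is that fatness forces $\mathrm{sec}_{\ga_{-r^2}}(X,V)<0$ on vertizontal planes, which in turn forces every dual-holonomy field along a horizontal geodesic to have constant norm, so horizontal geodesics move transversally to the fibers in a controlled way and the dual leaf is open, hence all of $M$.

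It then remains to upgrade ``single dual leaf'' to ``time-like and space-like geodesically complete.'' I would argue as follows: the original connection metric $\ga$ is a Riemannian metric on a compact manifold, hence complete; since $\mathrm{S}^1$ is compact its orbits are bounded; the Cheeger-type deformation that produces $\widetilde\ga_{-r^2}$ only rescales the metric along the (bounded) fiber direction by the constant factor $(1-r^2)^{-1}$ while leaving the horizontal distribution and its induced metric untouched, and it coincides with $\ga$ on $\cal H$. A space-like geodesic has horizontal-dominant velocity and its horizontal projection is controlled by the complete metric $\ga$, so it extends for all parameter values; a time-like geodesic is, by the analysis of fake-horizontal vectors in Theorem \ref{thm:vaiajudar} together with the single-dual-leaf property, generated by a vector whose isotropy algebra is all of $\lie s^1$, so the geodesic stays within a region where the fiber-rescaling is the only effect and completeness of $\ga$ again gives completeness. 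I expect the main obstacle to be making this last completeness argument fully rigorous: one must be careful that the Lorentzian metric $\widetilde\ga_{-r^2}$ is not merely a pointwise rescaling but the extension constructed in Theorem \ref{thm:vaiajudar}, so the clean statement is that its space-like and time-like geodesics project to, or lift from, $\ga$-geodesics on a compact manifold; invoking the fact that the dual foliation is all of $M$ is what guarantees there is no ``missing'' direction where incompleteness could hide, and assembling these pieces into a clean completeness proof is the delicate step, whereas verifying that each listed example is fat is essentially bookkeeping against \cite{Ziller_fatnessrevisited}.
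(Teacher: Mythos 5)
Your reduction of the corollary to the preceding theorem --- recognizing that each listed manifold is an $\mathrm{S}^1$-fat Riemannian orbibundle, so that the dual-leaf theorem applies --- matches the paper's implicit reasoning: the corollary is stated immediately after the theorem and the list of fat-bundle examples, with no further proof offered, so that identification is indeed the content.

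The problem is in the bridging step from ``single dual leaf'' to ``time-like and space-like geodesic completeness,'' and it is not just a matter of polish. You invoke Theorem~\ref{thm:vaiajudar} to claim that a time-like geodesic is ``generated by a vector whose isotropy algebra is all of $\lie s^1$,'' but that theorem only classifies which \emph{horizontal} geodesics emanating from a $G$-\emph{fixed point} become time-like for $s>0$. In the $\mathrm{S}^1$-fat orbibundle setting the action is (almost) free on the principal stratum, the Cheeger sign-flip makes the \emph{vertical} fiber direction time-like, and the generic time-like geodesic is dominated by its vertical component --- it is not a fake-horizontal geodesic through a fixed point, and there may be no fixed points at all. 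Completeness of those geodesics must come from the compactness of the circle fibers together with control of how the vertical and horizontal components of a $\widetilde\ga_{-r^2}$-geodesic evolve, not from the fake-horizontal analysis. Likewise, the claim that a space-like geodesic ``projects to a $\ga$-geodesic'' is true only for strictly horizontal geodesics (those lying in a dual leaf); a space-like geodesic may carry a nonzero vertical component, and preservation of causal character along a geodesic does not force its horizontal projection to be a $\ga$-geodesic. The paper itself also leaves this completeness step implicit, so the gap is shared, but your attempted bridge introduces a concrete misapplication of Theorem~\ref{thm:vaiajudar} that would need to be removed and replaced by a direct analysis of the geodesic equations in horizontal/vertical components.
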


\subsection{A general theorem for Lorentzian manifolds with positive Ricci curvature metrics}

We finish this section proving that our manifolds in this paper have positive Ricci curvature—the same holding for the product space that furnishes the considered Lorentzian metrics via Cheeger deformations.

\begin{theorem}\label{thm:einsteingeneral}
Let $(M,\ga)$ be a closed Riemannian manifold of dimension $n\geq 3$ with non-negative sectional curvature. Assume a free isometric $\mathrm{S}^1$-action on $M$ exists. Then for any $r>1$, the Lorentzian metric $\mathrm{g}_{-r^2}$ has positive Ricci curvature, the same holding for the Lorentzian manifold $(M\times \mathrm{S}^1,\ga-r^{-2}Q)$, where $Q$ is the standard bi-invariant metric on $\lie{s}^1$.

If the $\mathrm{S}^1$-action is almost semi-free, there exists $r^2$ sufficiently large such that the metric $\ga_{-r^2}$ has positive Ricci curvature in $M$. In particular, the metric $\widetilde \ga_{-r^2}$ given by Theorem \ref{thm:vaiajudar} has positive Ricci curvature.
\end{theorem}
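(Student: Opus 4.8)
The plan is to reduce the whole statement to the curvature formula already derived in the proof of Theorem~\ref{thm:einsteintoy}, suitably generalized, and then to handle the fixed points by a limiting/continuity argument. First I would record the Ricci curvature of $\ga_{-r^2}$ on the principal stratum. Exactly as in the computation following Theorem~\ref{thm:curvaturasec}, writing $\overline X = X + U^*$ with $X \in \cal H$ horizontal and $U^* \in \cal V$ vertical (one--dimensional, since the structure group is $\mathrm{S}^1$), and using that the $\mathrm{S}^1$-orbits may be assumed totally geodesic by Remark~\ref{rem:importante}, one obtains a formula of the shape
\[
\mathrm{Ric}_{\ga_{-r^2}}(\overline X) = \mathrm{Ric}^{\mathbf h}(X) + (1-r^2)^{-2}\sum_i |A^*_{e_i}U^*|_\ga^2 - (1-r^2)^{-1}|A^*_X e_3|_\ga^2 - 3r^2(1-r^2)^{-1}\sum_i |A_X e_i|_\ga^2,
\]
where $\{e_1,\dots,e_{n-1}\}$ is a $\ga$-orthonormal horizontal frame and $e_3$ (more precisely $e_{n}$) a $\ga$-unit vertical vector. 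The key sign observation is that for $r>1$ one has $1-r^2 < 0$, so $-(1-r^2)^{-1} > 0$ and $-3r^2(1-r^2)^{-1} > 0$; the first of these turns the O'Neill term $-(1-r^2)^{-1}|A^*_X e_3|^2$, which was \emph{negative} in the Riemannian submersion, into a \emph{positive} contribution, while the $(1-r^2)^{-2}$ term is manifestly non-negative. Thus on the principal stratum every summand except possibly $\mathrm{Ric}^{\mathbf h}(X)$ is non-negative, and $\mathrm{Ric}^{\mathbf h}(X) = \mathrm{Ric}_{M/\mathrm{S}^1}(X) - 3\sum_i |A_X e_i|_\ga^2$ is controlled by the non-negative sectional curvature of $M$ together with the Gray--O'Neill formula; strict positivity then follows because the base $M/\mathrm{S}^1$ inherits positive Ricci from the (strict) soul-type estimates for quotients of non-negatively curved manifolds, or, failing that, because a horizontal vector on which $\mathrm{Ric}^{\mathbf h}$ vanishes forces $A_X \equiv 0$, which by the $\mathrm{S}^1$-action being effective cannot happen on the whole of an orthonormal frame.

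Next I would treat the product space $(M\times \mathrm{S}^1, \ga - r^{-2}Q)$. Here the point is that this metric is precisely the Kaluza--Klein / warped object from which $\ga_{-r^2}$ is induced by a semi-Riemannian submersion with one-dimensional totally geodesic (indeed flat) fibers $\mathrm{S}^1$; since the fiber is flat and the submersion has totally geodesic leaves, the O'Neill equations for semi-Riemannian submersions give $\mathrm{Ric}_{M\times \mathrm{S}^1}(\overline X) = \mathrm{Ric}_{\ga_{-r^2}}(d\pi\,\overline X) + (\text{non-negative }A\text{-terms})$ on horizontal vectors, and $\mathrm{Ric}$ in the fiber direction is again expressed through $\|A\|^2$ with the favorable sign coming from $1-r^2<0$; so positivity upstairs is inherited from positivity downstairs, which we already have. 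One should double-check the mixed (vertizontal) Ricci components, but these are again of the form $\ga((\nabla A)\cdot,\cdot)$ which vanish under the totally geodesic fiber hypothesis, exactly as in the proof of Theorem~\ref{thm:einsteintoy}.

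Finally, for the almost semi-free case I would argue by a compactness-plus-continuity scheme. Away from the finitely many fixed points (or fixed submanifolds) the orbit tensor $P$ still has full rank $1$, so for $1 - r^2 P < 0$, i.e. $r^2 > \max P^{-1}$, the metric $\ga_{-r^2}$ is Lorentzian on $M^{reg}$ and the formula above applies verbatim there, giving $\mathrm{Ric}_{\ga_{-r^2}} > 0$ on $M^{reg}$; taking $r^2$ large makes the positive O'Neill terms dominate uniformly on compact subsets of $M^{reg}$, while the negative contributions stay bounded. At a fixed point $x$, Theorem~\ref{thm:vaiajudar} tells us which direction $X$ with $\lie g_X = \lie s^1$ to single out, and the extended metric $\widetilde\ga_{-r^2}$ was defined there by rescaling $\ga$ by $(1-r^2)^{-1}$ precisely on $\mathrm{span}_{\bb R}\{X\}$; so $\widetilde\ga_{-r^2}$ is a smooth metric on all of $M$ agreeing with $\ga_{-r^2}$ on $M^{reg}$, and $\mathrm{Ric}_{\widetilde\ga_{-r^2}}$ at $x$ is the continuous limit of $\mathrm{Ric}_{\ga_{-r^2}}$ along the time-like geodesic emanating in the direction $X$. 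Since the latter is bounded below by a positive constant on a punctured neighborhood (by the uniform estimate for large $r^2$ and the fact that, near $x$, the O'Neill term $|A^*_X e_3|^2$ is bounded away from $0$ because the action is effective and $x$ lies in the closure of $M^{reg}$), the limit is strictly positive, proving $\mathrm{Ric}_{\widetilde\ga_{-r^2}} > 0$ everywhere. The main obstacle I anticipate is this last continuity step: one must verify that the curvature quantities in the formula — in particular the $A$- and $\mathrm{Ric}^{\mathbf h}$-terms written in an adapted frame that degenerates as $s \to 0$ — extend continuously (indeed smoothly) across $x$ with a strictly positive limit, which is exactly where the ``fake horizontal vector'' analysis of Lemma~\ref{lem:S_X} and Theorem~\ref{thm:vaiajudar} does the essential work of identifying the correct smooth extension $\widetilde{\cal H}_s$ of the horizontal bundle.
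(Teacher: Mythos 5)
Your computation on the principal stratum follows the paper's own route (expand Ricci via the Cheeger formula of Theorem~\ref{thm:curvaturasec}, take $P=1$ by Remark~\ref{rem:importante}, observe that $1-r^2<0$ flips the sign of the O'Neill terms), and the formula you write down matches the one the paper derives. Two points worth flagging here: the paper's displayed inequality also only shows $\mathrm{Ric}_{\ga_{-r^2}}\geq 0$ rather than strict positivity, so your worry about where strict positivity comes from is legitimate; but the two arguments you sketch for it are not sound — a quotient $M/\mathrm{S}^1$ of a non-negatively curved manifold does \emph{not} in general have positive Ricci (e.g.\ a flat torus with a free $\mathrm{S}^1$-action), and ``$\mathrm{Ric}^{\mathbf h}(X)=0$ forces $A_X\equiv 0$'' does not follow from the Gray--O'Neill formula since $\mathrm{Ric}_{M/G}$ could itself be $0$ on a plane where $A_X$ is also $0$.

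For the product $(M\times\mathrm{S}^1,\ga-r^{-2}Q)$ you take a genuinely different route: rather than the paper's explicit computation in an adapted $\star$-horizontal/vertical frame, you try to propagate positivity up the semi-Riemannian submersion $\pi'\colon M\times\mathrm{S}^1\to M$. This is attractive as it is shorter, but it needs care that the sketch does not supply: the $\star$-orbits are \emph{spacelike} in $\ga-r^{-2}Q$ (their $\ga-r^{-2}Q$-norm is $(1-r^{-2})|U|^2_Q>0$), so one is looking at a Lorentzian submersion with spacelike fibers over a Lorentzian base, and the sign of the $A$-terms in the Gray--O'Neill Ricci identity then depends on the causal type of the directions involved; you cannot just say they are ``non-negative'' without checking. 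The paper's direct frame computation (its equation \eqref{eq:Riccigeral} and the subsequent identification of the mixed terms with $|A_{e_i}A^*_X(W^*+r^{-2}U^*)|_{\ga}$) is precisely doing that sign bookkeeping.

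The almost semi-free case is where your argument has a real gap. You assert that near a fixed point $x$ the term $|A^*_X e_3|^2$ is ``bounded away from $0$.'' In fact the opposite happens: as one approaches a fixed point the vertical distribution collapses, the orbit tensor $P\to 0$, and $A_XY=\tfrac12[X,Y]^{\mathbf v}$ degenerates with it, so the positive O'Neill contributions vanish rather than persisting. A naive continuity-from-the-regular-stratum argument therefore does not immediately give strict positivity at $x$. The paper instead invokes Corollary~C(a) of \cite{cavenaghiesilva}, which is a result tailored to Cheeger deformations near isolated singular orbits and does the delicate curvature estimate there; if you want to avoid the citation you would need to redo that singular-stratum analysis (using the smooth extension $\widetilde{\cal H}_s$ of Lemma~\ref{lem:S_X} and a Taylor expansion of $P$ and $A$ in the normal exponential coordinate), not merely pass to a limit of the regular-stratum formula.
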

\begin{proof}
    The proof is a generalization of the proof of Theorem \ref{thm:einsteintoy}. We take $\mathrm{S}^1$ to be totally geodesic for $\ga$, and so the orbit tensor $P: \lie m_x \rightarrow \lie m_x$, for every $x\in M$, can be taken equal to the identity, where we decompose
    $\lie g = \lie m_x\oplus \lie g_x$ orthogonaly with respect to any fixed bi-invariant metric induced from $\mathrm{Iso}(\ga)$. If the $G$-action is free, then $\lie m_x \cong \lie g$ for every $x\in M$. Otherwise, in the presence of fixed points $\lie g_x = \lie g$ for every fixed point $x$ and $P$ collapses to the $0$-operator along any geodesic coming from the principal stratum: see Lemma \ref{lem:S_X}.

    We compute the Ricci curvature of the metric $\ga_{-r^2}$. With this aim, we write any vector $\overline X\in T_xM$ as $\overline X = X + U^*$ for $U\in \mathbf{i}\bb R$ where the superscript $\ast$ reinforces that $U^*$ is an action vector at $x\in M$. Taking $\{e_1,\ldots,e_{n-1}\}$ as a $\ga$-orthonormal basis to $\cal H_x$ and $\{e_n\}\in \mathbf{i}\bb R$ is a unit vector, one gets that $\{e_1,\ldots,e_{n-1}\}\cup \left\{\sqrt{-1}(1-r^2)^{1/2}e_n\right\}$ is a $\ga_{-r^{2}}$-orthogonal basis to $T_xM$ such that $\ga_{-r^2}(\sqrt{-1}(1-r^2)^{1/2}e_n,\sqrt{-1}(1-r^2)^{1/2}e_n) = -1$. We have added the $\sqrt{-1}$ multiplying factor since we are considering $r^2-1>0$. So $(1-r^2)^{1/2}$ is a complex number. Therefore, just as in the proof of Theorem \ref{thm:einsteintoy}, this choice of basis works as a simple way to write the needed basis explicitly.

    Now the needed unreduced sectional curvature can be easily computed for any fixed $\overline X \in T_xM$ as for $i\in \{1,\ldots,n-1\}$
    \begin{align*}
        K_{\ga_{-r^2}}(\overline X,e_i) &= \kappa_{-r^2}((1-r^2)^{-1}\overline X,e_i)\\
        &= K_{\ga}((1-r^2)^{-1}\overline X,e_i) -3r^2(1-r^2)^{-1}|\nabla_{(1-r^2)^{-1}\overline X}e_i|_{\ga}^2\\
        &=K_{\ga}(X,e_i) + (1-r^2)^{-2}K_{\ga}(U^*,e_i) + 2(1-r^2)^{-1}R_{\ga}(e_1,X,U^*,e_i)-3r^2(1-r^2)^{-1}|\nabla_{(1-r^2)^{-1}\overline X}e_i|_{\ga}^2\\
        &= K_{\ga}(X,e_i) + (1-r^2)^{-2}|A^*_{e_i}U^*|_{\ga}^2-3r^2(1-r^2)^{-1}|\nabla_{(1-r^2)^{-1}\overline X}e_i|_{\ga}^2
    \end{align*}
    where we have used the 'vertizontal' curvature under totally geodesic fibers is given by $|A^*_{e_i}U^*|_{\ga}^2$ and that the mixed terms curvature component $R_{\ga}(e_1, X, U^*,e_i)$ vanishes.

    Moreover,
    \begin{align*}
        K_{\ga_{-r^2}}(\overline X,\sqrt{-1}(1-r^2)^{1/2}e_n) &= \kappa_{-r^2}((1-r^2)^{-1}\overline X,\sqrt{-1}(1-r^2)^{-1/2}e_n)\\
        &= K_{\ga}((1-r^2)^{-1}\overline X,\sqrt{-1}(1-r^2)^{-1/2}e_n) - 3r^2(1-r^2)^{-1}|\nabla_{(1-r^2)^{-1}\overline X}\sqrt{-1}(1-r^2)^{1/2}e_n|_{\ga}^2\\
        &= -(1-r^2)^{-1}K_{\ga}(X,e_n) -3r^2(1-r^2)^{-1}|\nabla_{(1-r^2)^{-1}\overline X}\sqrt{-1}(1-r^2)^{1/2}e_n|_{\ga}^2\\
        &= -(1-r^2)^{-1}|A^*_Xe_n|_{\ga}^2 -3r^2(1-r^2)^{-1}|\nabla_{(1-r^2)^{-1}\overline X}\sqrt{-1}(1-r^2)^{1/2}e_n|_{\ga}^2.
    \end{align*}

    The equation on the last line is obtained again due to the vanishing of mixed curvature terms and the fact that the fiber is one-dimensional. This also implies that
    \[\nabla_{(1-r^2)^{-1}\overline X}\sqrt{-1}(1-r^2)^{1/2}e_n = 0.\]
    Just observe that the possible non-vanishing term is $\sqrt{-1}(1-r^2)^{-1/2}\nabla_Xe_n$ which coincides with (a multiple of) the shape operator of the fibers, that is $0$ due to the totally geodesic assumption. Hence,
    \begin{equation}
          K_{\ga_{-r^2}}(\overline X,\sqrt{-1}(1-r^2)^{1/2}e_n) = -(1-r^2)^{-1}|A^*_Xe_n|_{\ga}^2.
    \end{equation}

    The same argument to deal with $\nabla_{(1-r^2)^{-1}\overline X}e_i$ can be employed ensuring that 
    \[\nabla_{X+(1-r^2)^{-1}U^*}e_i = \nabla_Xe_i = A_Xe_i.\]
    Therefore,
 \[
        \mathrm{Ric}_{\ga_{-r^2}}(\overline X) = \mathrm{Ric}^{\mathbf h}(X) + (1-r^2)^{-2}\sum_{i=1}^{n-1}|A^*_{e_i}U^*|^2_{\ga}-3r^2(1-r^2)^{-1}\sum_{i=1}^{n-1}|A_Xe_i|_{\ga}^2\\
        -(1-r^2)^{-1}|A^*_Xe_n|_{\ga}^2.
\]
    Since $\mathrm{Ric}^{\mathbf h}(X) = \mathrm{Ric}_{M/G}-3\sum_{i=1}^{n-1}|A_Xe_i|_{\ga}^2$ we have
     \[
        \mathrm{Ric}_{\ga_{-r^2}}(\overline X) = \mathrm{Ric}_{M/G}(X) + (1-r^2)^{-2}\sum_{i=1}^{n-1}|A^*_{e_i}U^*|^2_{\ga}-3(1-r^2)^{-1}\sum_{i=1}^{n-1}|A_Xe_i|_{\ga}^2\\
        -(1-r^2)^{-1}|A^*_Xe_n|_{\ga}^2 \geq 0. 
\]

    We prove that $\ga -r^{-2}Q$ has positive Ricci curvature. With this aim, we observe that the vertical space associated with the $\star$-acion defined by equation \eqref{eq:star} on $M\times \mathrm{S}^1$ has as vertical space at $(x,q)\in M\times \mathrm{S}^1$ the space
    \begin{equation*}
        \cal V_{(x,q)} = \left\{ (U^*,U) : U \in \mathbf{i}\bb R\right\}.
    \end{equation*}
Moreover, its $\ga-r^{-2}Q$-orthogonal complement $\cal H^{\star}_{(x,q)}$ can be described as
\begin{equation*}
    \cal H^{\star}_{(x,q)} = \left\{(X+r^{-2}W^*,W) : X\in \cal H_x,~W\in \mathbf{i}\bb R \right\}.
\end{equation*}

Observe that if one takes $\{e_1,\ldots,e_{n-1}\}$ to be a $\ga$-orthonormal set spanning $\cal H_x$ and a unit vector $e_n\in \mathbf{i}\bb R$ (for $\ga|_{\lie \mathrm{S}^1} = Q$) then the following constitute a desired basis for computing the Ricci curvature of $\ga -r^{-1}Q$:
\begin{equation}
    \left\{\left(e_i + r^{-2}e_n^*,e_n\right) : i\in \{1,\ldots,n-1\} \right\}\cup \left\{\left((1-r^{-2})^{-\h}e_n^*,(1-r^{-2})^{-\h}e_n)\right)\right\}.
\end{equation}

Using that any tangent vector to $T_{(x,q)}M$ can be written as
$\widetilde X:= \left(X+r^{-2}W^*+U^*,W+U\right)$ we compute
\begin{align*}
K_{\ga-r^{-2}}\left(\widetilde X,\left(e_i + r^{-2}e_n^*,e_n\right)\right) &= K_{\ga}\left(X+r^{-2}W^*+U^*,e_i+r^{-2}e_n^*\right),\\ i \in \{1,\ldots,n-1\}\\
 K_{\ga-r^{-2}}\left(\widetilde X,((1-r^{-2})^{-\h}e_n^*,(1-r^{-2})^{-\h}e_n))\right) &= K_{\ga}\left(X+r^{-2}W^*+U^*,(1-r^{-2})^{-\h}e_n^*\right)
\end{align*}
\begin{align*}  K_{\ga}\left(X+r^{-2}W^*+U^*,e_i+r^{-2}e_n^*\right) &= K_{\ga}(X,e_i) + r^{-4}K_{\ga}(X,e^*_n) + r^{-2}K_{\ga}(W^*,e_i) \\+ r^{-2}K_{\ga}(U^*,e_i)+2R_{\ga}(X,e_i,e^*_n,X) + 4r^{-2}R_{\ga}(X,e_i,e^*_n,W^*) \\+ 4R_{\ga}(X,e_i,e^*_n,U^*) + 2r^{-2}R_{\ga}(X,e_i,e_i,W^*) + 2R_{\ga}(X,e_i,e_i,U^*).\\
K_{\ga}\left(X+r^{-2}W^*+U^*,(1-r^{-2})^{-\h}e_n^*\right) &= (1-r^{-2})^{-1}K_{\ga}(X+r^{-2}W^*+U^*,e_n^*)\\
&= (1-r^{-2})^{-1}K_{\ga}(X,e_n^*) \\+ 2(1-r^{-2})^{-1}R_{\ga}(X,r^{-2}W^*+U^*,r^{-2}W^*+U^*,e_n^*).
\end{align*}
However,
\[R_{\ga}(X,r^{-2}W^*+U^*,r^{-2}W^*+U^*,e_n^*) = 0\]
\[R_{\ga}(X,e_i,e_i,U^*) = R_{\ga}(X,e_i,e_i,W^*) = R_{\ga}(X,e_i,e_n^*,X) = 0\]

Summing up
\begin{multline*}
    \mathrm{Ric}(\ga-r^{-2})(\widetilde X) = \left\{\dfrac{(r^2-1)(n-1)+r^6}{r^4(r^2-1)}\right\}K_{\ga}(X,e_n^*)   + \mathrm{Ric}^{\mathbf h}(X)\\ + r^{-2}\left\{\sum_{i=1}^{n-1}K_{\ga}(W^*,e_i) + K_{\ga}(U^*,e_i)\right\} + 4r^{-2}\sum_{i=1}^{n-1}\left\{R_{\ga}(X,e_i,e^*_n,W^*) + r^2R_{\ga}(X,e_i,e_n^*,U^*)\right\}.
\end{multline*}

Since $K_{\ga}(X,e_n^*) = |A^*_Xe_n^*|_{\ga}^2,~K_{\ga}(W^*,e_i) = |A^*_{e_i}W^*|_{\ga}^2,~K_{\ga}(U^*,e_i) = |A^*_{e_i}U^*|_{\ga}^2$
and
\[R_{\ga}(X,e_i,e_n^*,W^*) +r^2R_{\ga}(X,e_i,e_n^*,U^*) = \ga((\nabla^{\mathbf{v}}_{e^*_n}A)_Xe_i+A_{e_i}(A^*_Xe_n),W+r^2U^*)\]
we conclude
\begin{multline}\label{eq:Riccigeral}
     \mathrm{Ric}(\ga-r^{-2})(\widetilde X) = \left\{\dfrac{(r^2-1)(n-1)+r^6}{r^4(r^2-1)}\right\}|A_Xe_n^*|_{\ga}^2+ \mathrm{Ric}^{\mathbf h}(X)\\ + r^{-2}\left\{\sum_{i=1}^{n-1}|A^*_{e_i}W^*|_{\ga}^2 + |A^*_{e_i}U^*|_{\ga}^2\right\} + 4r^{-2}\sum_{i=1}^{n-1}\ga((\nabla^{\mathbf{v}}_{e^*_n}A)_Xe_i+A_{e_i}(A^*_Xe_n),W+r^2U^*).
\end{multline}

Finally, we claim
that
\begin{equation*}
\ga((\nabla^{\mathbf{v}}_{e^*_n}A)_Xe_i+A_{e_i}(A^*_Xe_n),W+r^2U^*) = \ga\left(A^*_{e_i}e^*_n,A^*_{X}(W^*+r^{-2}U^*)\right)
\end{equation*}
what shall conclude the proof since
\[\ga\left(A^*_{e_i}e^*_n,A^*_{X}(W^*+r^{-2}U^*)\right) = \ga\left(e_n^*,A_{e_i}A^*_X(W^*+r^{-2}U^*\right) = |A_{e_i}A^*_X(W^*+r^{-2}U^*)|_{\ga}\geq 0.\]

To verify the claim, it suffices to observe that (for the computation, we extend either $X$ or $V$ to geodesics and use equations \eqref{eq:dualnaintro}, \eqref{eq:codualnaintro}).
\begin{align*}
    (\nabla^{\mathbf v}_{e^*_n}A)_Xe_i &= \nabla_{e^*_n}(A_Xe_i) - A_{\nabla^{\mathbf h}_{e^*_n}X}e_i - A_X(\nabla^{\mathbf h}_{e^*_n}e_i)\\
    &= \nabla_{e^*_n}(A_Xe_i) +A_{A^*_Xe^*_n}e_i + A_X(A^*_{e_i}e^*_n)\\
    &= \nabla_{e^*_n}(A_Xe_i) - A_{e_i}(A^*_Xe^*_n) + A_X(A^*_{e_i}e^*_n)\\
    &= - A_{e_i}(A^*_Xe^*_n) + A_X(A^*_{e_i}e^*_n).
\end{align*}
So
\begin{align*}
   \ga((\nabla^{\mathbf{v}}_{e^*_n}A)_Xe_i+A_{e_i}(A^*_Xe_n),W+r^2U^*) &= \ga(A_X(A^*_{e_i}e^*_n),r^{-2}W^*+U^*).
   \end{align*}  
The almost semi-free case is a simple application of Corollary C item (a) in \cite{cavenaghiesilva} since the orbits in the complement of the principal stratum are isolated. \qedhere
\end{proof}

\section*{Acknowledgements}
The authors gladly acknowledge the anonymous referees for the careful read, leading to useful suggestions that improved the quality of the paper. The São Paulo Research Foundation (FAPESP) supports L. F. C.  grants 2022/09603-9, 2023/14316-1 and partially supports L. G. grants 2023/13131-8, 2021/04065-6.

	\bibliographystyle{alpha}
	
	\bibliography{main}

\end{document}